\theoremstyle{plain}
\newtheorem{theorem}{Theorem}[section]
\newtheorem{lemma}[theorem]{Lemma}
\newtheorem{proposition}{Proposition}[section]
\theoremstyle{remark}
\newtheorem{definition}[theorem]{Definition}
\newtheorem{remark}{Remark}[section]
\def\ms{\mathsf}
\newcommand{\N}{\mathbb{N}}
\newcommand{\R}{\mathbb{R}}
\renewcommand{\Pr}{\mathbb{P}}
\newcommand{\E}{\mathbb{E}}
\renewcommand{\d}{\mathrm{d}}
\newcommand{\1}{\mathbbm{1}}
\DeclareMathOperator{\V}{Var}
\DeclareMathOperator{\Cov}{Cov}
\newcommand{\e}{\epsilon}
\newcommand{\lp}{\left(}
\newcommand{\rp}{\right)}
\newcommand{\lb}{\left[}
\newcommand{\rb}{\right]}
\newcommand{\Pn}{\mathcal P_n}
\newcommand{\Y}{\mathcal Y}
\newcommand{\consisest}{U_k(m^k)}
\newcommand{\bx}{{\bf x}}
\newcommand{\by}{{\bf y}}
\newcommand{\bz}{{\bf z}}
\newcommand{\vep}{\varepsilon}
\newcommand{\Rk}{R_k(C_kn/m)}
\newcommand{\mE}{\mathcal E}
\begin{document}

\begin{frontmatter}
\title{Layered Hill estimator for extreme data in clusters}
%\title{A sample article title with some additional note\thanksref{t1}}
\runtitle{Layered Hill estimator}
%\thankstext{T1}{A sample additional note to the title.}

\begin{aug}
%%%%%%%%%%%%%%%%%%%%%%%%%%%%%%%%%%%%%%%%%%%%%%%
%% Only one address is permitted per author. %%
%% Only division, organization and e-mail is %%
%% included in the address.                  %%
%% Additional information can be included in %%
%% the Acknowledgments section if necessary. %%
%% ORCID can be inserted by command:         %%
%% \orcid{0000-0000-0000-0000}               %%
%%%%%%%%%%%%%%%%%%%%%%%%%%%%%%%%%%%%%%%%%%%%%%%

\author[A]{\fnms{Taegyu}~\snm{Kang}\ead[label=e1]{tkang81@gatech.edu}},
\author[B]{\fnms{Takashi}~\snm{Owada}\thanks{[\textbf{Corresponding author}]}\ead[label=e2]{owada@purdue.edu}}

%%%%%%%%%%%%%%%%%%%%%%%%%%%%%%%%%%%%%%%%%%%%%%
%% Addresses                                %%
%%%%%%%%%%%%%%%%%%%%%%%%%%%%%%%%%%%%%%%%%%%%%%

\address[A]{H. Milton Stewart School of Industrial and Systems Engineering, Georgia Institute of Technology \printead[presep={ ,\ }]{e1}}

\address[B]{Department of Statistics,
Purdue University \printead[presep={ ,\ }]{e2}}

\end{aug}

\begin{abstract}
A new estimator is proposed for estimating the tail exponent of a heavy-tailed distribution. This estimator, referred to as the layered Hill estimator, is a generalization of the traditional Hill estimator, building upon a layered structure formed by clusters of extreme values. We argue that the layered Hill estimator provides a robust alternative to the traditional approach, exhibiting desirable asymptotic properties such as consistency and asymptotic normality for the tail exponent. Both theoretical analysis and simulation studies demonstrate that the layered Hill estimator shows significantly better and more robust performance, particularly when a portion of the extreme data is missing.
\end{abstract}

\begin{keyword}[class=MSC]
\kwd[Primary ]{60G70, 60D05}
\kwd[; secondary ]{60F05, 62F10, 62F12}
\end{keyword}

\begin{keyword}
\kwd{Extreme value theory}
\kwd{Hill estimator}
\kwd{Stein's method for normal approximation}
\kwd{missing extremes}
\end{keyword}

\end{frontmatter}
%%%%%%%%%%%%%%%%%%%%%%%%%%%%%%%%%%%%%%%%%%%%%%
%% Please use \tableofcontents for articles %%
%% with 50 pages and more                   %%
%%%%%%%%%%%%%%%%%%%%%%%%%%%%%%%%%%%%%%%%%%%%%%
%\tableofcontents

\section{Introduction}
\label{sec: introduction}

One of the primary challenges in analyzing data from a heavy-tailed distribution is estimating the thickness of the tail. A heavy-tailed distribution is typically modeled by a tail that decays according to a power law, with the decay rate determined by the so-called tail exponent. 
%its regularly varying tail, where the thickness is characterized by the tail exponent. 
%In $\R^1$, for instance, a probability density function $f: \R^1 \to \R^1$ is called regularly varying with tail exponent $\alpha > 1$ if, for every $x \in \R^1$, it holds that
%\[
%\lim_{t \to \infty} \frac{f \lp tx \rp }{f(t)} = x^{-\alpha}.
%\]
In Extreme Value Theory, the most widely used estimator for the tail exponent is the \emph{Hill estimator}, which was introduced by \cite{Hill:1975}, with detailed expositions   in \cite{embrechts:kluppelberg:mikosch:1997, dehaan:ferreira:2006, Resnick:2007}.  The Hill estimator is defined as
\begin{equation}  \label{e:Hill.intro}
H_{m,n} := \frac{1}{m} \sum_{i=1}^m \log \frac{X_{(i)}}{X_{(m+1)}}, 
\end{equation}
where $X_{(1)} \ge X_{(2)} \ge \cdots \ge X_{(n)}$ are the order statistics from a sequence of random variables $(X_i)_{i=1}^n$, and the cut-off sequence $m = m(n)$ satisfies $m \to \infty$ and $m/n \to 0$ as $n \to \infty$. The theoretical properties of the Hill estimator, including its consistency and asymptotic normality for the tail exponent, have been extensively studied under appropriate conditions (see, e.g., \cite[Chapters 4 and 9]{Resnick:2007}).

Despite these theoretical guarantees, the Hill estimator’s behavior can be sensitive, often limiting its practical usefulness. One such instance occurs when some extreme data are missing. Missing extreme values often arise in datasets in natural disasters \cite{burroughs:tebbens:2001, burroughs:tebbens:2002}. From a theoretical perspective, \cite{beirlant:alves:gomes:2016} and \cite{chakrabarty:samorodnitsky:2012} examined the properties of heavy-tailed distributions when the tail is truncated, and proposed methods for estimating the tail exponent in such circumstances. A key issue when extreme values are missing is that the Hill estimator tends to underestimate the tail exponent due to the absence of large observations. To illustrate this, Table \ref{table:intro.missing} presents a simple simulation. As more extreme values are missing, the Hill estimator increases, leading to an underestimation of the thickness of the tail.

\begin{table}[t]
\centering
\begin{tabular}{l | c c c c c}
\hline \hline
Missing rate & $0\%$ & $25\%$ & $50\%$ & $75\%$ & $100\%$ \\
\hline
Point estimate & $2.503$ & $2.840$ & $3.114$ & $3.377$ & $3.627$ \\
\hline \hline
\end{tabular}
\vspace{5pt}
\caption{\footnotesize Point estimates obtained by the Hill estimators for varying proportions of missing extremes: We generate $n = 10,000$ random points from the density $f(x) = C |x|^{-2.5} \1\{|x| \geq 1\}$ in $\R^2$. The Hill estimators are computed using the $m = m(n) = n^{0.5}$ most extreme points, determined by their Euclidean distance from the origin. Each column corresponds to the case that a specific percentage of the most extreme points are removed. For instance, the second column gives an estimate when $0.25 m$ of the most extreme points are removed, and the Hill estimator is calculated using the $0.25m$ to $1.25m$ largest extreme points. }
\label{table:intro.missing}
\end{table}

The Hill plot is a two-dimensional graph where the cut-off sequence $m$ is plotted along the $x$-axis, and the corresponding values of $H_{m,n}$ are plotted along the $y$-axis. One fundamental issue in the presence of missing extremes is that the Hill plot tends to become an increasing function of the cut-off sequence without any flat regions \cite{Zou;Davis;Samorodnitsky:2020, Xu;Davis;Samorodnitsky:2022}. This phenomenon makes it particularly difficult to determine an appropriate cut-off sequence for estimating the tail exponent by the Hill estimator. To resolve this issue, \cite{Zou;Davis;Samorodnitsky:2020} introduced the ``Hill Estimator Without Extremes" (HEWE) process and proposed a statistical algorithm to estimate both the missing rate of extremes and the underlying tail exponent. 

In this paper, we approach this problem using a different methodology. Specifically, we make use of the layered structure of clusters formed by extreme observations. The study of such a layered structure was first introduced in the pioneering paper \cite{Adler;Bobrowski;Weinberger:2014} in the context of manifold learning in Topological Data Analysis. Since then, this structure has been extensively examined in relation to the behavior of various topological invariants \cite{Owada:2018, owada:bobrowski:2020, Thomas:2023}. The primary aim of this paper is to propose a general version of the Hill estimators by exploiting such layered structure of extremes. This new estimator is expected to be significantly more robust, particularly in the presence of missing extreme data.

To provide a rough understanding of the main idea, let us consider a specific situation. First, we generate a set of random points $\mathcal{X}_n = \{ X_1, \dots, X_n \} \subset \R^d$ sampled from a common spherically symmetric density with a heavy tail.
%with a regularly varying tail. 
Next, we form a geometric graph $G(\mathcal{X}_n; 1)$ on the vertex set $\mathcal{X}_n$ using a unit connectivity radius. Specifically, an edge $\{ X_i, X_j \}$ is added whenever $|X_i - X_j| \leq 1$, where $|\cdot|$ denotes the Euclidean norm in $\R^d$. For a positive integer $K\ge2$, we fix a feasible and connected graph $\Gamma_k$ on $k$ vertices for each $k = 2, \dots, K$. In particular, $\Gamma_2$ necessarily represents an edge, while the choice of graphs for $k \ge 3$ is arbitrary.

Following the arguments in \cite{Owada;Adler:2017} and \cite{Owada:2017}, 
%we define an increasing sequence of functions $R_k: [0, \infty) \to [0, \infty)$, $k = 1, \dots, K$, growing to infinity and satisfying, 
%\begin{equation}  \label{e:layered.intro}
%R_K(n/m) \ll R_{K-1}(n/m) \ll \cdots \ll R_2(n/m) \ll R_1(n/m), \ \ \ n \to \infty. 
%\end{equation}
%Using these sequences in \eqref{e:layered.intro}, 
we partition the space $\R^d$ into several distinct layers, as illustrated in Figure \ref{fig:layer}. 
%and, the subgraph counts isomorphic to $\Gamma_k$, on vertices outside the ball $B \lp 0; R_k(n/m) \rp$ of radius $R_k(n/m)$ is asymptotically $m^k$ up to constants for each $k$. 
We then construct the ``layered" Hill estimators, corresponding to each of the layers in Figure \ref{fig:layer}. 
%For each $k$, the $k$th layered Hill estimator is defined by $m^k$ farthest $k$-point clusters from the origin so that it only depends on the points in the $k$th layer. 
In what follows, we refer to the original Hill estimator in \cite{Hill:1975} as the traditional Hill estimator to differentiate it from the newly proposed ones.

\begin{figure}[t]
\centering
\includegraphics[height = 3.5cm, width = 13.5cm]{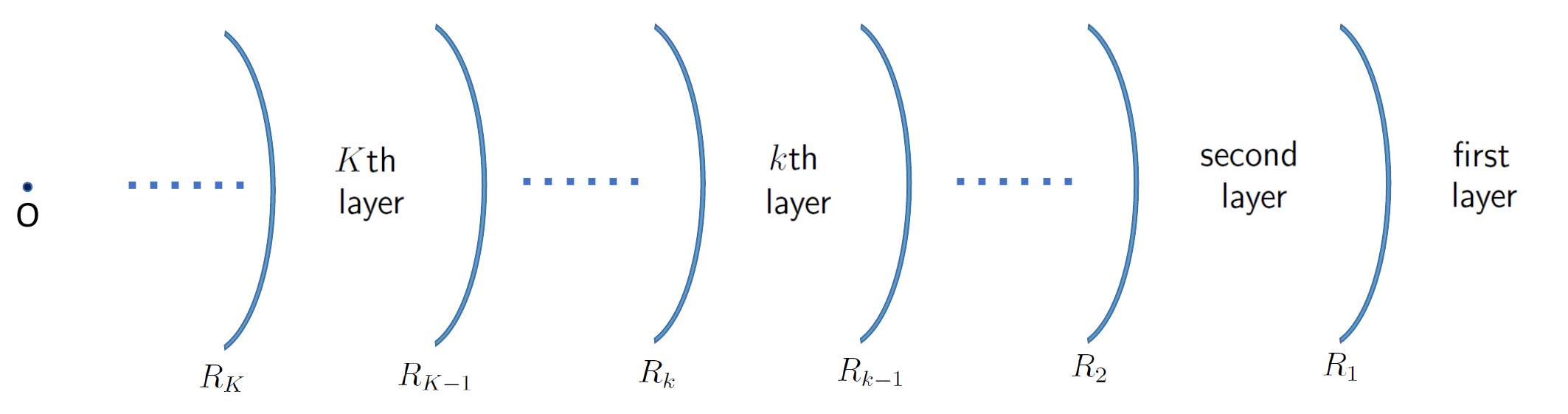}
\caption{Layered structure of clusters of extremes. Here, $R_k$ is a sequence of $n/m$ that diverges as $n \to \infty$, where $m=m(n)$ is the cut-off defined in \eqref{e:Hill.intro} (see \eqref{e:def.radius.function.R} for a formal definition of $R_k$). The ``$k$th layer” refers to the annulus with inner radius $R_k$ and outer radius $R_{k-1}$. }
\label{fig:layer}
\end{figure}

Specifically, we construct the first layered Hill estimator by using the $m$ largest values in the set $\{|X_i|\}_{i=1}^n$. This estimator is equivalent to the traditional Hill estimator, relying only on the random points in the first layer of Figure \ref{fig:layer}, without incorporating points from other layers. 
Next, the second layered Hill estimator is constructed from the $m^2$ largest values in the set 
$$
\Big\{ \min \big( |X_i|, |X_j| \big): |X_i - X_j| \leq 1, \, 1 \leq i < j \leq n \Big\},
$$
which represents a collection of the values of $\min\big( |X_i|, |X_j| \big)$ where $\{ X_i, X_j \}$ forms an edge in the random geometric graph.  
In this case, the random points corresponding to these $m^2$ largest values are asymptotically distributed in the second layer of Figure \ref{fig:layer}. Indeed, the first layer lies farther away from the origin than the second layer, so we do not asymptotically observe pairs of extremes forming edges within the first layer.

More generally, for each $k = 2, \dots, K$, the $k$th layered Hill estimator is constructed from the $m^k$ largest values in the set
$$
\Big\{ \min \big( |X_{i_1}|, \dots, |X_{i_k}| \big): G\big( \{ X_{i_1}, \dots, X_{i_k} \}; 1 \big) \cong \Gamma_k, \ 1 \leq i_1 < \cdots < i_k \leq n \Big\},
$$
representing a collection of the values of $\min \big( |X_{i_1}|, \dots, |X_{i_k}| \big)$, such that the geometric graph induced on $(X_{i_1}, \dots, X_{i_k})$ becomes isomorphic to $\Gamma_k$. Then, all the random points used for this estimator are asymptotically drawn from the $k$th layer in Figure \ref{fig:layer}. Since $\Gamma_k$ is a  \emph{connected} graph, these selected $k$ extreme points must be “close” to one another. Such proximity does not occur within the first through $(k-1)$st layers, as these layers are farther from the origin compared to the $k$th layer.

Now, let us consider the case where a portion of  the extreme random points is missing. Specifically, we remove some of the extremes from the first layer in Figure \ref{fig:layer}. Then, the traditional Hill estimator, which is equivalent to the first layered Hill estimator, may significantly underestimate the tail exponent, as shown in Table \ref{table:intro.missing}. In contrast, the  $k$th layered Hill estimator for $k \geq 2$ is less affected by the absence of extremes in the first layer, as it does not rely on points from that layer. 
Even when some of the extremes are missing from the first layer, information about the tail exponent is still encoded in the random points within the higher-order layers, and, the $k$th layered Hill estimators for $k \geq 2$ are expected to outperform the traditional Hill estimator. In practical applications, when the possibility of missing extremes is suspected, it would be sensible to consider a mixture of these layered estimators to reduce the impact of missing data.

The remainder of the paper is structured as follows. In Section \ref{sec:formal.def}, we formally introduce the layered Hill estimators. Section \ref{sec:consistency} is devoted to establishing the consistency of these estimators. As an intermediate step, we verify that the layered tail empirical measure converges, in probability, to a deterministic Radon measure under suitable scaling. In Section \ref{sec:asym.normality}, we prove the asymptotic normality of the layered Hill estimator. This requires several intermediate steps, including the asymptotic normality of both the layered tail empirical measure and a certain stochastic process induced by this measure.
Section \ref{sec:non.random.centering} addresses the asymptotic normality of the layered Hill estimator under non-random centering, subject to a set of technical conditions. The overall proof strategy involves a set of new methods that have not appeared in the proof for the traditional Hill estimator. For example, when $k \ge 2$, the $k$th layered Hill estimator counts $k$-tuples satisfying certain geometric constraints, such as the formation of subgraphs isomorphic to $\Gamma_k$. Due to such geometric constraints, previously developed techniques for proving asymptotic normality of the traditional Hill estimator are not applicable. Instead, we need to apply Stein's method for normal approximation (\cite[Theorem 2.4]{Penrose:2003}), in conjunction with the multivariate Mecke formula (\cite[Theorem 4.4]{Last;Penrose:2017}).
Finally, in Section \ref{sec:simulation}, we present simulation studies to compare the performance of the traditional Hill estimator and the layered Hill estimators when a portion of the extremes is missing. All proofs are deferred to Section \ref{sec:proofs}.

Before concluding the Introduction, we must highlight some limitations of our method. First, as mentioned in the previous paragraph, one of the key components in our proof is Stein's method for normal approximation. However, for the application of this result, our calculations require to assume that a probability density exists, and regular variation condition is imposed directly on the tail of the density functions. This assumption is rather restrictive compared to the standard approach, where regular variation is assumed on the tail of distribution functions. Nonetheless, such a strict condition is important for our method. 

Second, throughout the paper, we assume that the probability density is spherically symmetric. While this assumption simplifies our analysis, it is not essential; all the results in this paper can be extended to densities with more general level sets. For instance, \cite{Thomas:2023} examined the layered structure generated by a density whose level sets form concentric convex sets. From this viewpoint, combining our  method with existing methods for estimating the shape of level sets of a density function (see \cite{Berthet;Einmahl:2022}) seems to be an interesting direction for future research. 

Finally, we assume  that the random sample is drawn from a (inhomogeneous) Poisson point process. The main reason for this assumption is that the spatial independence of a Poisson point process simplifies the analysis. However, we believe that all of our  results can be extended to the case where the random sample is drawn from a binomial process. Extending the results in this manner requires the technique known as de-Poissonization (see, e.g., \cite[Section 2.5]{Penrose:2003}).  In particular, the methods  in \cite{Owada;Adler:2017} and \cite{Wei;Owada:2022}  will be useful for the required de-Poissonization.

Throughout the paper, we use the following standard notation. First, $|x|$ denotes the Euclidean norm of a vector $x \in \R^d$, but when we are given a set $A$, $|A|$ also stands for the cardinality of $A$. Moreover, let $\mathbb{S}^{d-1}$ represent the $(d-1)$-dimensional unit sphere in $\R^d$ centered at the origin, $s_{d-1}$ be the surface area of $\mathbb{S}^{d-1}$, and $\kappa_d$ denote the volume of the $d$-dimensional unit ball in $\R^d$. Additionally, for any point $y$ in a given space, $\delta_y(\cdot)$ denotes the Dirac measure at the point $y$. Furthermore, $\Rightarrow$ denotes weak convergence, and $\stackrel{\Pr}{\to}$ means convergence in probability. Finally, given a sequence $(X_n)_{n \ge 1}$ of random variables, we write $X_n = o_p(1)$ if $X_n\stackrel{\Pr}{\to}0$ as $n\to\infty$. 
\medskip

\section{Layered Hill estimators}  \label{sec:formal.def}

In this section, we formally set up the layered Hill estimator. We assume a spherically symmetric probability density function $f: \R^d \to [0,\infty)$, $d\ge1$, with a  regularly varying tail: there exists a tail exponent $\alpha >d$, such that for every  (equivalently, some, due to spherical symmetry of $f$) $\theta \in \mathbb{S}^{d-1}$ and $r\in (0,\infty)$, it holds that 
\begin{equation}  \label{e:RV.f}
\lim_{t\to\infty} \frac{f(tr\theta)}{f(t\theta)}=r^{-\alpha}. 
\end{equation}
Because of spherical symmetry, we can define $f(r):= f(r\theta)$ for any $r\ge0$ and $\theta \in \mathbb{S}^{d-1}$.  Moreover, \eqref{e:RV.f} implies that $f(r)=r^{-\alpha}L(r)$ for some   slowly varying function $L$. We assume that $L$ is eventually monotone,  i.e., monotone on $(r_0,\infty)$ for some large enough $r_0>0$. In addition,  we assume that $L$ is differentiable   on $(r_0,\infty)$ and satisfies, for all $r\ge r_0$,
\begin{equation}  \label{e:smoothness.L}
\frac{rL'(r)}{L(r) }< \alpha -d.
\end{equation}
Condition \eqref{e:smoothness.L}  is a standard smoothness assumption on $L$; it holds, for instance,  when $L$ is differentiable and belongs to the Zygmund class (see Section 1.5 in \cite{Bingham;Goldie;Teugels:1987}).

For every $n\ge1$, we define $\Pn$ as a (inhomogeneous) Poisson point process in $\R^d$ whose intensity is given by $nf$. 
 Let $m = m(n)$ be a sequence of positive integers defined by 
\begin{equation}  \label{e:RV.m(n)}
m(n) =[ n^\beta ], \ \ \ \beta\in (0,1), 
\end{equation}
where $[\cdot]$ denotes an integer part. Clearly, $m=m(n)$  satisfies
\begin{equation}  \label{e:cond.m(n)}
m(n) \to \infty \ \ \text{and} \ \ m(n)/n \to 0 \ \ \text{as} \ n \to \infty.
\end{equation}

In this setting, Lemma  \ref{l:no.oscillation} in the Appendix guarantees that, for each $k\ge1$,  there exists an increasing function $R_k:[0,\infty)\to[0,\infty)$, diverging to infinity, such that
\begin{equation}  \label{e:def.radius.function.R}
t^k R_k(t)^d f\big( R_k(t)\big)^k \to \alpha k -d, \ \ \text{as } t\to \infty. 
\end{equation}

Given $k\ge1$, we define $h_1(x)\equiv 1$ for all $x\in \R^d$, and 
for  $k\ge2$, let $h_k: (\R^d)^k \to \{0,1\}$ be an indicator function which is used to impose a geometric constraint for our layered Hill estimators. Specifically, we put the following three conditions on $h_k$. 
\vspace{5pt}

\begin{itemize}
\item[$(i)$] $h_k$ is permutation invariant, i.e., 
$$
h_k (y_{\sigma(1)}, \dots, y_{\sigma(k)}) = h_k (y_1,\dots,y_k), \ \ \ y_i \in \R^d
$$
for every permutation $\sigma$ on $\{ 1,\dots,k \}$. 
\item[$(ii)$] $h_k$ is translation invariant: 
$$
h_k (x+y_1,\dots,x+y_k) = h_k (y_1,\dots,y_k), \ \ \ x, y_i \in \R^d. 
$$
\item[$(iii)$] There exists a constant $L\in (0,\infty)$, such that 
\begin{equation}  \label{e:denseness.hk}
h_k(y_1,\dots,y_k)=0, \ \ \text{whenever } \ms{diam}(y_1,\dots,y_k) >L, 
\end{equation}
where $\ms{diam}(y_1,\dots,y_k)$ denotes the diameter of a point set $\{ y_1,\dots, y_k \}$ in terms of the Euclidean distance. 
\end{itemize}
Since $h_k$ is permutation invariant, we often use the notation $h_k (\Y)$ for a $k$-point set $\Y \subset \R^d$, to represent $h_k(y_1,\dots,y_k)$ for any permutation $\{  y_1,\dots,y_k\}$ of the elements in $\Y$. 
There are several typical examples of such indicator functions: for each $k\ge2$, 
$$
h_k(y_1,\dots,y_k) = \1 \big\{ \ms{diam}(y_1,\dots,y_k) \le t \big\}
$$
for some $t>0$, and 
$$
h_k(y_1,\dots,y_k) = \1 \big\{  G(\{ y_1,\dots,y_k \}; t) \cong\Gamma_k \big\}, 
$$
where 
$\cong$ means graph isomorphism and $\Gamma_k$ represents a fixed, feasible, and connected graph of $k$ vertices. One can readily check that these indicators are permutation and translation invariant. In particular, the connectivity of $\Gamma_k$ ensures condition (iii) of the latter indicator.

%Before moving forward, we define a finite and positive constant
For each $k\ge1$, we define a certain point process on $E_k :=  \big( \overline{\R^d} \setminus \{ \textbf{0} \} \big)^k$. Here, $\overline{\R^d}$ denotes a compactification of $\R^d$ obtained by adding  points at infinity,  and $\overline{\R^d} \setminus \{\bf 0\}$ is its one-point uncompactification  obtained by removing the origin $\bf 0$; see~\cite[Section 6.1.3]{Resnick:2007} for a detailed discussion of their role in extreme value theory. The space $E_k$ encodes the spatial information on the $k$th layered structure of extremes generated by $\Pn$. More precisely, the point process defined below counts the number of $k$-point subsets of $\Pn$ that satisfy a geometric constraint specified by $h_k$, and is called the layered tail empirical measure. 

\begin{definition}[Layered tail empirical measure]
\label{def:layered.tail.empirical.measures}
Given $k \ge1$ and $m=m(n)$ satisfying \eqref{e:cond.m(n)}, we define 
$$
\nu_{k,n}(\cdot) := \frac{1}{k!} \sum_{\bm{y} \in (\Pn)_{\neq}^k} h_k(\bm{y}) \, \delta_{\bm{y}/ R_k(C_k n/m)} (\cdot), \ \ \ n\ge1, 
$$
where 
$
(\Pn)_{\neq}^k := \big\{ (y_1,\dots,y_k) \in \Pn^k: y_i \neq y_j \text{ for } i \neq j \big\}
$
is the $k$th factorial measure of $\Pn$ and 
\begin{equation}  \label{e:def.Ck}
C_k := \Big(  \frac{s_{d-1}}{k!} \int_{(\R^d)^{k-1}}h_k(0,z_1,\dots,z_{k-1})\, \d \bz\Big)^{1/k}. 
\end{equation}
\end{definition}

The layered Hill estimator can be constructed from the layered tail empirical measure $\nu_{k,n}$. A  challenge is, however, that $\nu_{k,n}$ involves the radius $R_k(C_k n/m)$, whose exact value is unknown. To resolve this issue, we need to propose a consistent estimator for $R_k(C_k n/m)$. To this end, for each $k\ge1$ and $n\ge1$, we set $U_k(1) \ge U_k(2) \ge \cdots \ge U_k\lp G_{k,n} \rp$  with $G_{k,n}:= \binom{|\Pn|}{k}$, as the order statistics constructed from the values in 
$$
\big\{ h_k(\Y) \ms{min}(\Y): \Y\subset \Pn, \, |\Y|=k \big\}, 
$$
where $\ms{min}(\Y) =\ms{min}(y_1,\dots,y_k):= \min_{1\le i \le k}|y_i|$.  
Proposition \ref{p:consistency.tail.measure} in the next section  verifies, formally, that the $m^k$th largest order statistic, $\consisest$, serves as a consistent estimator for the radius $R_k(C_k n/m)$. Based on this estimator, we define the corresponding layered tail empirical measure by 
\begin{equation}  \label{e:est.tail.empirical}
\hat \nu_{k,n} (\cdot ):= \frac{1}{k!}\, \sum_{\bm{y}\in (\Pn)_{\neq}^k } h_k(\bm{y}) \, \delta_{\bm{y}/\consisest}(\cdot ), \ \ \ n\ge1, 
\end{equation}
which no longer contains unknown quantities. 
We set $\hat \nu_{k,n}$ to be a null measure whenever $\consisest = 0$. 

Given $k\ge1$, let 
\begin{equation}  \label{e:def.Br}
B_r = \big\{ (x_1,\dots,x_k)\in E_k: |x_i|\ge r  \text{ for all } i=1,\dots,k \big\}, \ \ \ r>0. 
\end{equation}
Now, the layered Hill estimator can be defined as follows. See also Figure \ref{fig:construction}. 

\begin{figure}[t]
\centering
\includegraphics[scale=0.35]{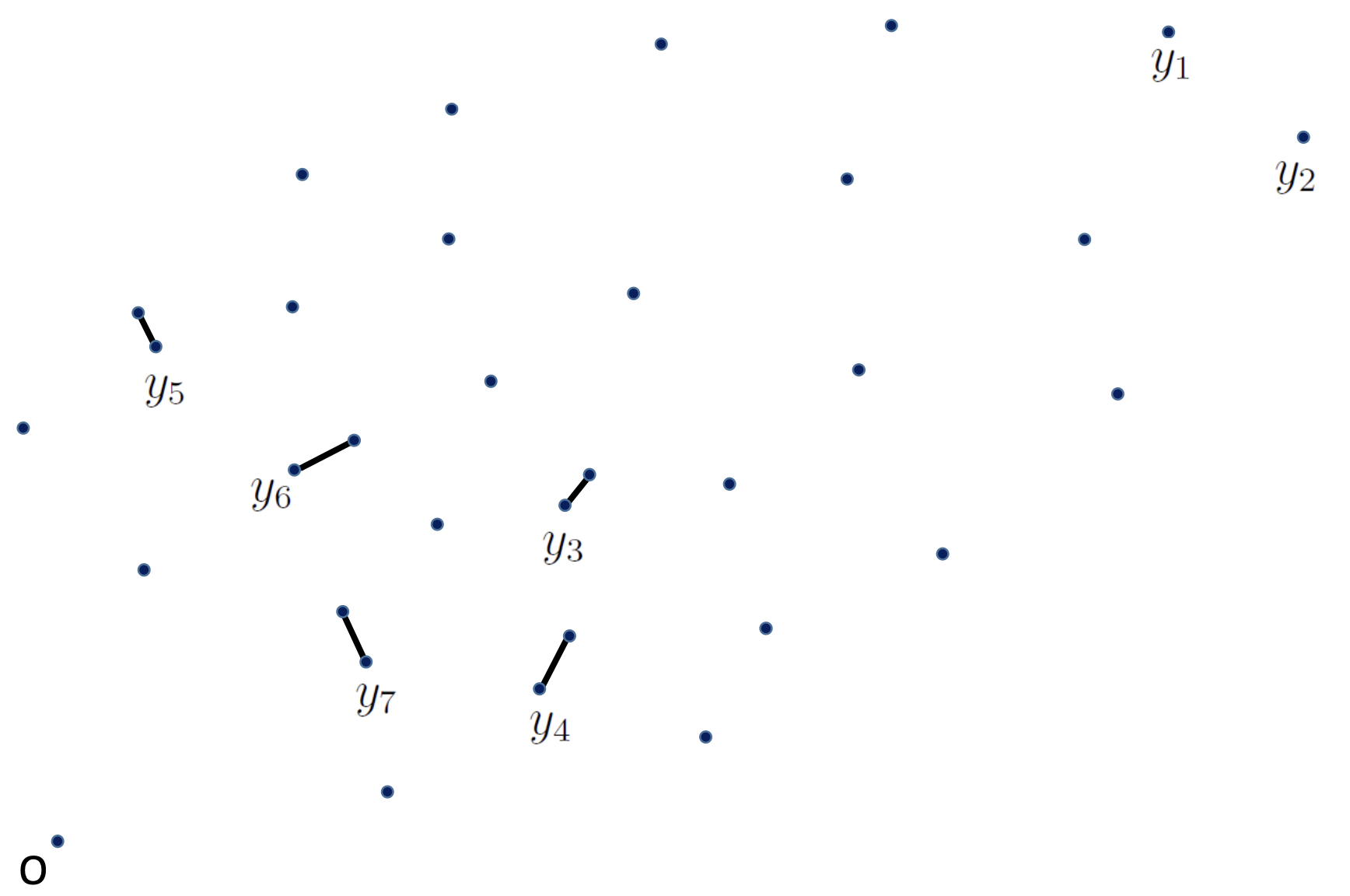}
\caption{\footnotesize{We set $n=32$, $m=2$, and $h_2(x_1,x_2)=\1 \big\{ |x_1-x_2| \le 1\big\}$. The first layered Hill estimator is calculated using $|y_1|$ and $|y_2|$, as they are the two largest extreme values, in terms of the distance from the origin. Meanwhile, the second layered Hill estimator is obtained from $|y_3|, |y_4|, |y_5|$, and $|y_6|$, which are the $m^2=4$ largest extreme values corresponding to the endpoints of edges.}}
\label{fig:construction}
\end{figure}

\begin{definition}[Layered Hill estimator] \label{def:layered.Hill.est}
For each $k \ge1$ and $m=m(n)$ satisfying \eqref{e:cond.m(n)},   define the $k$th layered Hill estimator by 
\begin{align*}
H_{k,m,n}  &:= \int_{1}^{\infty}m^{-k} \hat{\nu}_{k,n}(B_t) \frac{\d t}{t} 
%=\frac{1}{k!\, m^k} \sum_{\bm{y}\in (\Pn)_{\neq}^k } h_k(\bm{y}) \log \Big( %\frac{m(\bm{y})}{\consisest} \vee 1\Big) \\
= \frac{1}{m^k} \sum_{\Y\subset \Pn, |\Y|=k} h_k(\Y) \log  \bigg( \frac{\ms{min}(\Y)}{\consisest} \vee 1\bigg). 
\end{align*}
\end{definition}
Note that the sum $\sum_{\Y\subset \Pn, |\Y|=k}$ is taken over all $k$-tuples from $\Pn$, without considering any permutations of the selected points, in contrast to the summations in Definition \ref{def:layered.tail.empirical.measures} and \eqref{e:est.tail.empirical}. 
We also remark that when $d=k=1$, the layered Hill estimator of the first-order coincides with the traditional Hill estimator defined in  \cite[Chapter 4]{Resnick:2007}. 
%Note that, when $d = 1$, the layered Hill estimator can still be defined for density $f$ supported on $[0, \infty)$, up to appropriate adjustments for involved constants.
\vspace{36pt}

\section{Asymptotic properties of the layered Hill estimators}  \label{sec:asym.properties}

\subsection{Consistency} \label{sec:consistency}

In this section, we establish the consistency of the layered Hill estimator in  Definition \ref{def:layered.Hill.est}. To achieve this, we first prove convergence in probability for the layered tail empirical measure in Definition \ref{def:layered.tail.empirical.measures}, along with the corresponding result for the measure in \eqref{e:est.tail.empirical}, in the space $M_+(E_k)$ of non-negative Radon measures on $E_k$. We endow $M_+(E_k)$ with the vague topology.

Given a measure $\nu$ on $E_k$ equipped with its Borel $\sigma$-algebra and a $\nu$-integrable function $\phi$, we often write $\nu(\phi)=\int \phi \d \nu$. For each $k\ge1$, define the measure $\mu_k\in M_+(E_k)$ by  the relation 
$$
\mu_k (\phi) = \frac{\alpha k-d}{s_{d-1}} \int_{\R^d} \phi (x,\dots,x) |x|^{-\alpha k} \d x, \ \ \ \phi \in C_c^+(E_k), 
$$
with $C_c^+(E_k)$ being the space of continuous and non-negative functions with compact support. 

\begin{proposition}   \label{p:consistency.tail.measure}
$(i)$ It holds that as $n\to\infty$, 
$$
m^{-k} \nu_{k,n} \stackrel{\Pr}{\to} \mu_k, \ \ \text{in }  M_+(E_k). 
$$
$(ii)$ We have, as $n\to\infty$, 
\begin{equation}  \label{e:consistent.est.by.order.stat}
\frac{\consisest}{R_k(C_kn/m)} \stackrel{\Pr}{\to} 1, 
\end{equation}
and also, 
$$
m^{-k} \hat{\nu}_{k,n} \overset{\Pr}{\to} \mu_k, \ \ \text{in }  M_+(E_k). 
$$
\end{proposition}

The theorem below ensures the consistency for the layered Hill estimators. 

\begin{theorem}
\label{t:consistency.Hill}
For each $k\ge1$, it holds that 
$$
H_{k,m,n} \stackrel{\Pr}{\to} \frac{1}{\alpha k-d}, \ \ \text{as } n\to\infty; 
$$
equivalently, 
\begin{equation}  \label{e:consis.alpha}
d/k +  (kH_{k,m,n})^{-1} \stackrel{\Pr}{\to} \alpha, \ \  \text{as } n\to\infty. 
\end{equation}
\end{theorem}
\medskip

\subsection{Asymptotic normality} \label{sec:asym.normality}
Subsequently, we verify the asymptotic normality of the layered Hill estimators. For this purpose, we need  a  few intermediate steps. First, we show that, with proper normalization, the layered tail empirical measure converges weakly to some Gaussian random measure. Second,   by using this result, the asymptotic normality of the  process
\begin{equation}  \label{e:nu.kn.proc.version}
\nu_{k,n} (t) := \nu_{k,n}(B_t) = \sum_{\Y \subset \Pn, \, |\Y|=k} h_k (\Y)\, \1 \big\{  \ms{min}(\Y)\ge t R_k (C_k n/m)\big\}, \ \ \ t>0, 
\end{equation}
is derived in the space $D(0,\infty]$ of right-continuous functions on $(0,\infty]$ with left limits. Finally, based on the functional asymptotic normality for the process \eqref{e:nu.kn.proc.version}, the desired asymptotic normality for the layered Hill estimator is deduced. 

For ease of notations,  we will  introduce the following sequences and functions. 
Since the limit of  $n f\big( R_k(C_k n/m) \big)$ exists in $[0,\infty]$ without oscillation (see Lemma \ref{l:no.oscillation} in the Appendix),  we can define the sequence $(\tau_{k,n})_{n\ge1}$ by
\begin{equation} \label{e:def.tau_kn} 
\begin{aligned}
\tau_{k,n} := \begin{cases}
m^k & \text{ if } n f \big(  R_k(C_k n/m)  \big) \to 0 \text{ or } \xi \in (0, \infty), \\[3pt]
m^k \big\{  n f \big( R_k(C_k n/m)  \big) \big\}^{k-1} & \text{ if } n f \big( R_k(C_k n/m)  \big) \to \infty.
\end{cases}
\end{aligned}
\end{equation}
Note that by condition \eqref{e:def.radius.function.R}, $n f \lp R_k (C_k n/m) \rp$ either converges to $0$, tends to a positive constant $\xi$, or diverges to $\infty$.   We will use \eqref{e:def.tau_kn} for an  appropriate scaling factor for the normalization. 
For  measurable functions $\phi_i$, $i=1,2$, we define
\begin{align}
\begin{split}  \label{e:def.Vk}
V_k(\phi_1, \phi_2) := \begin{cases}
\frac{(\alpha k-d) D_{k, k}}{k! (C_k)^k} \int_{\R^d} \phi_1(x, \dots, x)\phi_2(x, \dots, x) |x|^{-\alpha k} \d x \\[5pt]
&\hspace{-120pt} \text{ if } n f \big( R_k(C_kn/m)  \big) \to 0, \\[5pt]
\sum_{\ell = 1}^k  \frac{\xi^{k- \ell}(\alpha k-d) D_{k,\ell}}{\ell! ( (k-\ell)! )^2 (C_k)^k} \int_{\R^d} \phi_1(x, \dots, x)\phi_2(x, \dots, x)|x|^{-\alpha (2k - \ell)} \d x \\[5pt]
&\hspace{-120pt} \text{ if } n f \big( R_k(C_kn/m)  \big) \to \xi \in (0, \infty), \\[5pt]
\frac{(\alpha k-d) D_{k,1}}{( (k-1)! )^2(C_k)^k} \int_{\R^d} \phi_1(x, \dots, x)\phi_2(x, \dots, x) |x|^{-\alpha (2k-1)} \d x \\[5pt]
&\hspace{-120pt} \text{ if } n f \big( R_k(C_kn/m)  \big) \to \infty, 
\end{cases}
\end{split}
\end{align}
where 
\begin{equation}  \label{e:def.Dkl}
D_{k, \ell} := \int_{(\R^d)^{2k - \ell - 1}} h_k(0, z_1, \dots, z_{k-1}) h_k(0, z_1, \dots, z_{\ell - 1}, z_k, \dots, z_{2k - \ell -1}) \d \bz, \ \ 1 \le \ell \le k. 
\end{equation}
Furthermore, denote $V_k(\phi):= V_k(\phi, \phi)$. 

\begin{remark}
As stated in  Lemma \ref{l:no.oscillation} in the Appendix, one can classify the limiting behavior of $n f \big( R_k(C_k n/m) \big)$ in terms of the slowly varying  component $L$ of $f$, together with the parameter $\beta$ in \eqref{e:RV.m(n)}. However,   since the resulting conditions on $L$ and $\beta$ are technically involved, we define \eqref{e:def.tau_kn} and \eqref{e:def.Vk} directly in terms of the limiting value of $n f \big( R_k(C_k n/m) \big)$.
\end{remark}

Subsequently, for any measurable sets $A, B \subset E_k$, we define
$$
V_k(A, B) := V_k(\1_{A}, \1_B), \ \ \text{ and } \ \ V_k(A):=V_k(A,A). 
$$
One can check that the bivariate map $V_k(\cdot, \cdot)$ becomes a covariance kernel of a Gaussian random measure on $E_k$ (more precisely, a random measure on $E_k$, which is Gaussian on every relatively compact subset of $E_k$); see \cite{Horowitz:1986} for more information. Denote by $(\Phi_k)_{k \ge 1}$ the sequence of independent and centered Gaussian random measures, such that the covariance kernel of $\Phi_k$ is given by $V_k$ for each $k \ge 1$. 
%Additionally, let $M(E_k)$ denote the space of all signed Radon measures on $E_k$. 
Now we state the asymptotic normality of the layered tail empirical measure.

\begin{proposition} \label{p:CLT.measure}
For every $K \ge 1$, we have the  multivariate finite-dimensional asymptotic normality: for each $\phi_k$ in the space $C_c(E_k)$ of continuous functions on $E_k$ with compact support, we have as $n\to\infty$, 
$$
\Big( \tau_{k,n}^{-1/2} \big( \nu_{k,n} (\phi_k) -\E[\nu_{k,n}(\phi_k)]\big)\Big)_{k = 1}^K \Rightarrow \lp \Phi_k(\phi_k) \rp_{k = 1}^K.  
$$
\end{proposition}

\begin{remark}
Proposition \ref{p:CLT.measure} establishes finite-dimensional weak convergence of the ``signed" random measure 
\begin{equation} \label{e:signed.centered}
\Big( \tau_{k,n}^{-1/2} \big( \nu_{k,n}  -\E[\nu_{k,n}]\big) \Big)_{k=1}^K, 
\end{equation}
where $\E[\nu_{k,n}]$ represents the intensity measure of $\nu_{k,n}$. However, this does not directly imply weak convergence of \eqref{e:signed.centered} in the space $M(E_k)$ of signed Radon measures on $E_k$. This is in stark contrast to the weak convergence in $M_+(E_k)$ (i.e., the space of non-negative Radon measures on $E_k$), which can be implied by finite-dimensional weak convergence. The fundamental reason for this discrepancy is that $M(E_k)$ is not generally metrizable (\cite[Section 1.3.1]{Barrio;Deheuvels;Geer:2007}), whereas $M_+(E_k)$ can be metrized under the vague topology (\cite[Proposition 3.17]{resnick:1987}). 
%In the space $M_+(E_k)$ of non-negative Radon measures, finite-dimensional convergence implies weak convergence in the space. However, in the space of signed Radon measures which the normalized layered empirical measure belongs to, the finite-dimensional convergence does not immediately imply weak convergence. In fact, it is also known that the space of signed Radon measures on a Polish space is not metrizable in general, unlike the space of non-negative Radon measures~\cite[Section 1.3.1]{Barrio;Deheuvels;Geer:2007}.
\end{remark}

Our next goal is to deduce  the asymptotic normality of the process \eqref{e:nu.kn.proc.version}. 

\begin{proposition} \label{p:CLT.functional}
For every $K \ge1$, it holds that
$$
\bigg(\Big(  \tau_{k,n}^{-1/2} \big(  \nu_{k,n}(t) - \E [ \nu_{k,n}(t) ] \big), \,  t \in (0, \infty]\Big) \bigg)_{k = 1}^K \Rightarrow \Big( \big( W_k(t), \, t \in (0, \infty] \big)\Big)_{k = 1}^K \text{ as } n \to \infty,
$$
in the space $\big(\mathcal{D} (0, \infty]\big)^K$, where, for each $1 \leq k \leq K$,
$$
\begin{aligned}
W_{k}(t) := \begin{cases}
B_{k,k} (L_{k,k} t^{d - \alpha k}) & \text{ if } n f \big( R_k(C_kn/m)  \big) \to 0, \\[3pt]
\sum_{\ell = 1}^k \xi^{(k - \ell)/2} B_{k, \ell}(L_{k,\ell} t^{d - \alpha(2k - \ell)} ) & \text{ if } n f \big( R_k(C_kn/m) \big) \to \xi \in (0, \infty), \\[3pt]
B_{k, 1} (L_{k, 1} t^{d - \alpha (2k - 1)} ) & \text{ if } n f \big( R_k(C_kn/m) \big) \to \infty.
\end{cases}
\end{aligned}
$$
Here, $(B_{k, \ell}, \, 1 \le \ell \le k \le K)$ are independent standard Brownian motions and  
$$
L_{k, \ell} := \binom{k}{\ell} \frac{\alpha k -d}{(k-\ell)! (\alpha (2k - \ell) - d)} \cdot \frac{D_{k,\ell}}{D_{k,k}},  \qquad  1 \leq \ell \leq k \le K.
$$
\end{proposition}
From Propositions \ref{p:CLT.measure} and \ref{p:CLT.functional}, we observe that the limiting Gaussian random measure and Gaussian process exhibit a phase transition based on the limit of $nf\big( \Rk \big)$.
Finally, Theorem \ref{t:CLT.Hill.estimator} establishes the asymptotic normality of the layered Hill estimator as desired.

\begin{theorem}
\label{t:CLT.Hill.estimator}
For every $K \ge1$, we have as $n\to\infty$, 
$$
\bigg( m^k \tau_{k,n}^{-1/2} \Big( H_{k,m,n} - m^{-k} \int_1^\infty \E \big[ \nu_{k,n}(s) \big] \Big|_{s=\frac{\consisest t}{R_k(C_kn/m)}} \frac{\d t}{t}  \Big)\bigg)_{k=1}^K \Rightarrow \Big( \int_1^\infty W_k(t) \frac{\d t}{t} \Big)_{k=1}^K. 
$$
\end{theorem}
\medskip

\subsection{Non-random centering}
\label{sec:non.random.centering}

Theorem \ref{t:CLT.Hill.estimator} has verified the asymptotic normality of the layered Hill estimators, but the result is not yet sufficiently practical for real applications. Specifically, the centering term in Theorem \ref{t:CLT.Hill.estimator} remains random, as it involves the order statistics $\consisest$. Moreover, the centering depends on $n$, which makes it difficult to construct confidence intervals for the tail exponent. To address this issue, we impose a set of conditions analogous to second-order regular variation, as described in \cite[Chapter 9]{Resnick:2007} and \cite{Geluk;deHaan;Resnick;Starica:1997}. It is worth noting that our conditions are applied directly to the  density function $f$, in contrast to  the cited works above, where regular variation is assumed on distribution functions. Specifically, for each $k \ge 1$, we assume the following three conditions.

\begin{itemize}

\item[(C1)] \label{cond: condition 1} Given $\delta_0>0$, we have, for every $\delta \geq \delta_0$,
$$
\begin{aligned}
& m^{k/2} \int_{ \rho = \delta}^{\infty} \int_{\theta \in \mathbb{S}^{d-1}} \int_{\bz \in (\R^d)^{k-1}} h_k(0, \bz)\,  \rho^{d-1} \\
& \times \left| \frac{f \big( R_k(C_kn/m) \rho  \big)}{f \big( R_k(C_kn/m)  \big)} \prod_{i=1}^{k-1} \frac{f \big( R_k(C_kn/m) | \rho \theta + R_k(C_kn/m)^{-1} z_i |  \big)}{f \big( R_k(C_kn/m)  \big)} - \rho^{-\alpha k } \right| \d \bz \d \sigma(\theta) \d \rho \\
& \to 0,
\end{aligned}
$$
as $n\to\infty$. Moreover, there exist $N = N(\delta_0) \ge1$,  $C = C(\delta_0) > 0$, and $q > 0$, such that for all $n \geq N$ and $\delta \geq \delta_0$, the left-hand side above is bounded by $C\delta^{-q}$. 
\item[(C2)] \label{cond: condition 2} It holds that
$$
m^{k/2} \lb \lp \frac{C_kn}{m} \rp^k R_k(C_kn/m)^d f \big( R_k(C_kn/m)  \big)^k - (\alpha k -d) \rb \to 0  \; \text{ as } n \to \infty.
$$
\item[(C3)] \label{cond: condition 3} It holds that
$$
\frac{m^{k/2}}{R_k(C_kn/m)} \to 0 \; \text{ as } n \to \infty.
$$
\end{itemize}

We observe that Condition (C1) can be regarded as a stronger version of the regular variation condition in \eqref{e:RV.f}, while  (C2) imposes a stronger requirement on the rate of convergence in \eqref{e:def.radius.function.R}. At first glance, this set of conditions may appear rather restrictive; however, it is straightforward to verify that a simple power-law density
$$
f(x) = C |x|^{-\alpha} \1 \{ |x| \ge 1 \}, \ \ \ x \in \R^d,
$$
satisfies all of these conditions.

\begin{theorem} \label{t:adjust.centering}
Let $K \ge1$. Under the conditions (C1)-(C3), as $n\to\infty$
\begin{equation}  \label{e:CLT.non-random.centering}
\lp m^k \tau_{k,n}^{-1/2} \lp H_{k,m,n} - \frac{1}{\alpha k -d} \rp \rp_{k=1}^K \Rightarrow \lp Z_k \rp_{k = 1}^K,  \ \  \text{ in } \R^K, 
\end{equation}
where $(Z_k)_{k=1}^K$ is a sequence of independent Gaussian random variables with zero mean,  and the variance of $Z_k$ depends on the limit of $nf(R_k(C_k n/m))$; more specifically, 
$$
\begin{aligned}
Z_k \sim  \begin{cases}
N (0, A_{k,k,\alpha}) & \text{ if } n f \big( R_k(C_kn/m)  \big) \to 0, \\[3pt]
N(0, \sum_{\ell = 1}^k \xi^{k-\ell} A_{k, \ell, \alpha}) & \text{ if } n f \big( R_k(C_kn/m)  \big) \to \xi \in (0, \infty), \\[3pt]
N(0, A_{k, 1, \alpha}) & \text{ if } n f \big( R_k(C_kn/m)  \big) \to \infty, 
\end{cases}
\end{aligned}
$$
where $A_{k, \ell, \alpha}$ is a constant defined by
$$
A_{k, \ell, \alpha} := L_{k, \ell} \cdot \frac{(\alpha (2k-\ell)-d)^2 -2\alpha (k-\ell)(\alpha k-d)}{(\alpha (2k - \ell) -d)^2 (\alpha k - d)^2}, \ \ 1 \le \ell \le k \le K. 
$$
\end{theorem}
For the actual calculation of the confidence interval for $\alpha$, it is necessary to replace the unknown parameter $\alpha$ in $\tau_{k,n}$ and the constant $A_{k,\ell,\alpha}$ with its consistent estimator $\hat{\alpha} = d/k + (k H_{k,m,n})^{-1}$, as suggested in \eqref{e:consis.alpha}. A detailed procedure is given in Section \ref{sec:coverage.rate}.
\medskip

\section{Simulation Studies}
\label{sec:simulation}

\subsection{Point estimates}

We present simulation results comparing the performance of the layered Hill estimators against the traditional Hill estimator. These results show that the layered Hill estimator significantly outperforms the traditional one, especially when some of the extremes are missing.
Tables \ref{table: pareto 2.5}, \ref{table: pareto 5}, and \ref{table: pareto 7.5} below present the results when data is generated from a power-law density in $\R^2$, defined by 
\begin{equation}  \label{e:power-law.simulation}
f(x) = C|x|^{-\alpha} \1 \{ |x|\ge1 \}, \ \ \ x\in \R^2. 
\end{equation}
The parameter values of $\alpha$ used in Tables \ref{table: pareto 2.5}, \ref{table: pareto 5}, and \ref{table: pareto 7.5} are 2.5, 5, and 7.5, respectively. For each value of $\alpha$, we set $n=10,000$ and consider three different choices for $m=m(n)$: $(i)$ $m=n^{0.1}$, $(ii)$ $m=n^{0.3}$, and $(iii)$ $m=n^{0.5}$. To assess the impact of missing extremes, we examine three cases with different rates $\delta$ of missing extremes. Specifically, we consider $(i)$ no missing data ($\delta=0$), $(ii)$ removing the largest $0.5m$ extreme points, measured by Euclidean distance from the origin, and calculating the layered Hill estimators from the remaining points ($\delta=0.5$), and $(iii)$ removing the largest $m$ extreme points and calculating the layered Hill estimators from the remaining points ($\delta=1$). 
We use the function $h_1(x)\equiv 1$, $x\in \R^2$ for the first layered Hill estimator, while the indicator $h_2(x_1, x_2) = \1 \big\{  |x_1-x_2|\le 1\big\}$,  $x_1, x_2 \in \R^2$, is used for the second layered Hill estimator. 

In Tables \ref{table: pareto 2.5}, \ref{table: pareto 5}, and \ref{table: pareto 7.5}, each row corresponds to the choice of $m=n^{0.1}$, $n^{0.3}$, and $n^{0.5}$, respectively. Each column presents the simulation results for each type of estimators and the proportion of missing extremes.  Here, “L1” stands for the first layered Hill estimator $H_{1,m,n}$, and “L2” refers to the second layered Hill estimator $H_{2,m,n}$. Additionally, “Mix”  stands for a linear combination defined  as $0.5 H_{1,m,n} + 0.5 H_{2,m,n}$. In each cell, the value without parentheses shows the average of the estimates over $500$ iterations under the same simulation setting,  while the value in parentheses represents their root mean squared error. 

Table \ref{table: pareto 2.5} illustrates that the traditional Hill estimator, equivalent to the first layered Hill estimator (hereafter we call it ``L1"), lacks robustness as the proportion $\delta$ of missing extremes increases. Indeed, the estimates increase as $\delta$ grows,  regardless of the choice of the cut-off sequence $m$. In contrast, the second layered Hill estimator (it is called ``L2" in the following) exhibits much greater stability, yielding estimates close to the true value of $\alpha$, even when $\delta=1$. This improved stability arises from the fact that L2 relies on edges within the second layer, making it less affected by missing extremes in the first layer; see Figure \ref{fig:layer.intuitive}. Essentially, the probability of missing extremes forming edges within the first layer is negligible; the first layer is located farther  from the origin compared to the second layer, so we do not observe, asymptotically,  any  pair of extremes forming an edge within the first layer. Tables \ref{table: pareto 5} and \ref{table: pareto 7.5} further confirm that L2  consistently outperforms L1. Across all scenarios, L2 maintains its stability as $\delta$ increases, producing accurate estimates of $\alpha$, whereas L1 becomes less reliable as $\delta$ grows. 

\begin{table}[t]
\centering
\begin{minipage}{\textwidth}
\centering
\caption{Power law, $\alpha = 2.5$}
    \label{table: pareto 2.5}
    \vspace{-2pt}
    \resizebox{0.85\textwidth}{!}{%
    \begin{tabular}{ c | c  c  c | c  c  c | c  c  c}
    \hline \hline
    & L1  & L1  & L1  & L2 & L2 & L2 & Mix & Mix & Mix \\
    & $\delta=0$ & $\delta=0.5$ & $\delta=1$ & $\delta=0$ & $\delta=0.5$ & $\delta=1$ & $\delta=0$ & $\delta=0.5$ & $\delta=1$ \\
    \hline
    $m=n^{0.1}$ & $2.926$  & $3.573 $ & $4.226$ & $2.565$ & $ 2.565$ & $2.565$ & $2.746$ & $3.069$  & $3.396$ \\
    & $(1.169)$ & $(2.387)$ & $(3.282)$  & $(0.241)$ & $(0.241)$ & $(0.241)$   & $(0.705)$ & $(1.314)$ & $(1.762)$\\
    \hline
    $m=n^{0.3}$ & $2.531$ & $3.173 $ & $3.665$  & $2.482$ & $2.482$ & $2.482$ & $2.507$ & $2.828$ & $3.074$\\
    & $(0.145)$ & $(0.754)$ & $(1.261)$  & $(0.070)$ & $(0.070)$ & $(0.070)$ & $(0.108)$  & $(0.412)$  & $(0.666)$\\
    \hline
    $m=n^{0.5}$ & $2.501$ & $3.109$ & $3.644$ & $2.428$ & $2.428$ & $2.428$  & $2.465$  & $2.769$ & $2.536$\\
    & $(0.055)$ & $(0.621)$ & $(1.162)$ & $(0.079)$ & $(0.079)$ & $(0.079)$  & $(0.067)$  & $(0.350)$ & $(0.621)$ \\
    \hline \hline 
    \end{tabular}
    }
\end{minipage}

\vspace{3em}  % Add vertical space between the two minipages

\begin{minipage}{\textwidth}  
\centering
\caption{Power law, $\alpha = 5$}
    \label{table: pareto 5}
    \vspace{-2pt}
    \resizebox{0.85\textwidth}{!}{%
    \begin{tabular}{ c | c  c  c | c  c  c | c  c  c}
   \hline \hline
    & L1  & L1  & L1  & L2 & L2 & L2 & Mix & Mix & Mix\\
    & $\delta=0$ & $\delta=0.5$ & $\delta=1$ & $\delta=0$ & $\delta=0.5$ & $\delta=1$ & $\delta=0$ & $\delta=0.5$ & $\delta=1$ \\
    \hline
    $m=n^{0.1}$ & $7.530$  & $11.644$ & $17.145$  & $5.222$ & $  5.222$ & $5.223$ & $6.376$ & $8.433$ & $11.184$\\
    & $(12.682)$ & $(17.921)$ & $(30.020)$  & $(2.880)$ & $(2.880)$ & $(2.883)$  & $(7.781)$ & $(10.401)$ & $(16.452)$ \\
    \hline
    $m=n^{0.3}$ & $5.226$ & $9.045$ & $11.808 $   & $4.839$ & $4.843 $ & $4.852 $ & $5.033$  & $6.944$ & $8.330$\\
    & $(2.889)$ & $(6.842)$ & $(9.779)$  & $(2.355)$ & $(2.359)$ & $(2.369)$ & $(2.622)$  & $(4.601)$ & $(6.074)$\\
    \hline
    $m=n^{0.5}$ & $ 5.021$ & $8.701 $ & $ 11.848$ & $4.766$ & $ 4.771 $ & $ 4.813$ & $4.894$ & $6.736$ & $8.331$ \\
    & $(2.542)$ & $(6.249)$ & $(9.403)$  & $(2.271)$ & $( 2.278)$ & $(2.367)$  & $(2.407)$ & $(4.264)$ & $(5.885)$ \\
    \hline \hline
    \end{tabular}
    }
\end{minipage}

\vspace{3em}  % Add vertical space between the two minipages

\begin{minipage}{\textwidth}  
\centering
\caption{Power law, $\alpha = 7.5$}
\label{table: pareto 7.5}
\vspace{-2pt}
\resizebox{0.85\textwidth}{!}{%
\begin{tabular}{ c | c  c  c | c  c  c | c  c  c}
\hline \hline
& L1  & L1  & L1  & L2 & L2 & L2 & Mix & Mix & Mix\\
& $\delta=0$ & $\delta=0.5$ & $\delta=1$ & $\delta=0$ & $\delta=0.5$ & $\delta=1$ & $\delta=0$ & $\delta=0.5$ & $\delta=1$ \\
\hline
$m=n^{0.1}$ & $12.510$  & $21.169$ & $26.509$  & $7.911$ & $7.974$ & $8.090  $ & $10.211$  & $14.572$ & $17.300$\\
& $(15.375)$ & $(31.065 )$ & $(35.924)$  & $(5.646)$ & $(5.703)$ & $(5.825)$  & $(10.511)$ & $(18.384)$ & $(20.875)$ \\
\hline
$m=n^{0.3}$ & $7.874$ & $14.792$ & $ 20.026$  & $7.271$ & $7.383$ & $ 7.635 $ & $7.573$ & $11.088$ & $13.831$\\
& $(5.670)$ & $(12.848 )$ & $(18.339)$ & $(4.802)$ & $(4.911)$ & $(5.170)$ & $(5.236)$ & $(8.880)$ & $(11.755)$ \\
\hline
$m=n^{0.5}$ & $ 7.521$ & $14.184$ & $20.003 $  & $7.141$ & $ 7.358$ & $7.794 $ & $7.331$  & $10.771$ & $13.899$\\
& $(5.055)$ & $(11.752)$ & $(17.601)$  & $( 4.644)$ & $(4.863 )$ & $(5.308)$  & $(4.850)$ & $(8.308)$ & $(11.459)$ \\
\hline \hline
\end{tabular}
}
\end{minipage}
\end{table}

\begin{figure}[t]
\centering
\includegraphics[scale=0.40]{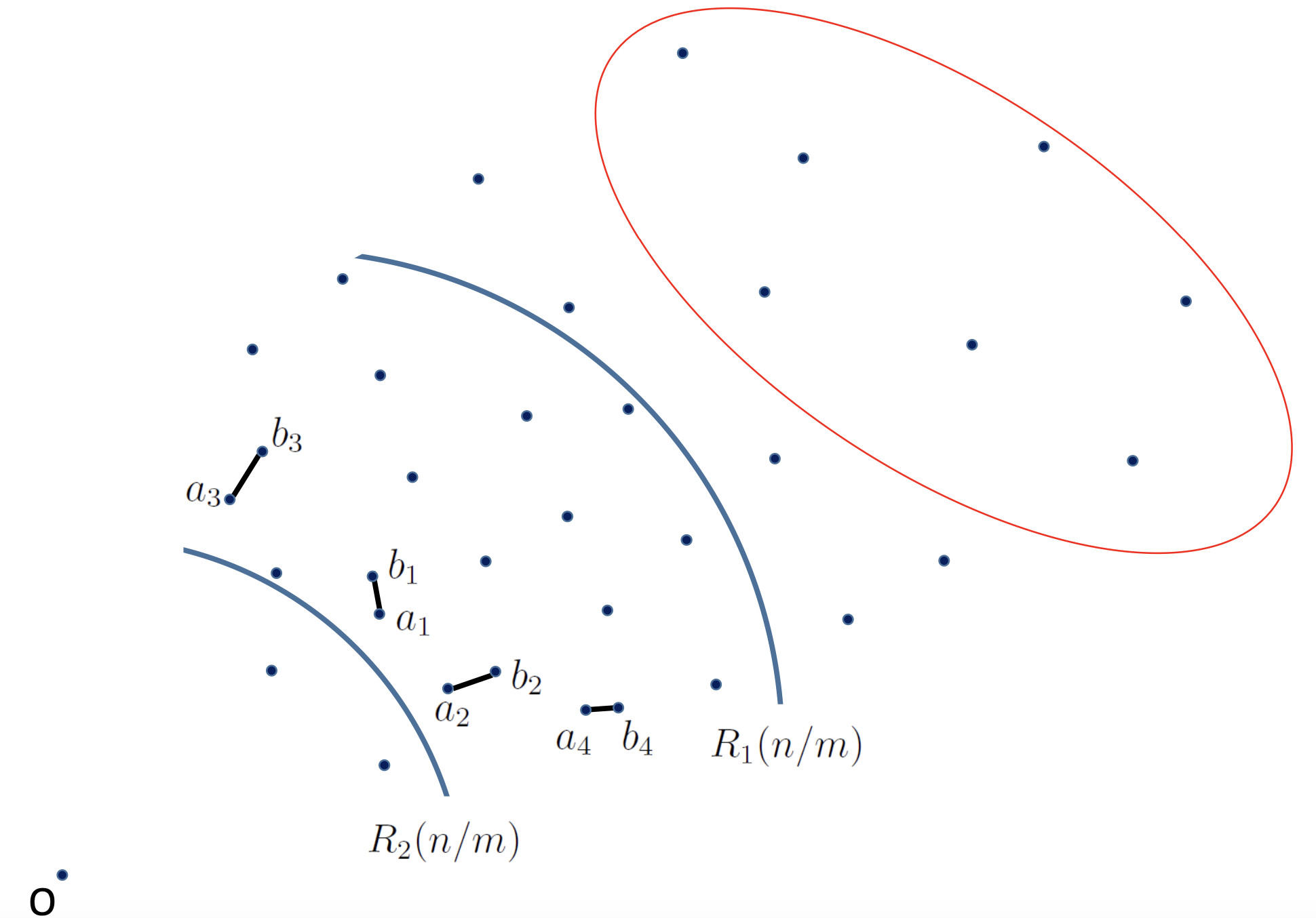}
\caption{\footnotesize{Extreme random points, circled in red in the first layer, are removed.  Then, the first layered Hill estimator  exhibits a significant bias, as it relies on these missing extremes. In contrast, the second layered Hill estimator only uses edges $\{ a_i, b_i \}$, $i=1,\dots,4$, and thus remains unaffected by the missing extreme points.}}
\label{fig:layer.intuitive}
\end{figure}
Additionally, the layered Hill estimator shows relatively robust performance even when the underlying distribution is not Pareto. In Table \ref{table:stable0.5}, a random sample is drawn from a spherically symmetric $\alpha$-stable law in $\R^2$ with $\alpha = 0.5$. In this case, L1 is unstable and increases as $\delta$ becomes larger. In contrast, L2 remains generally stable, estimating $\alpha$ more accurately, at least when $m = n^{0.1}$. Similarly, Table \ref{table:frechet0.5} presents simulation results where random points are sampled from the $\alpha$-Fr\'echet law with $\alpha = 0.5$. Here, the performance of L2 is comparable to that in Table \ref{table:stable0.5}. 
In general, the performance of L1 becomes less reliable when the underlying distribution is not Pareto (\cite[Section 4.4]{Resnick:2007}). This phenomenon appears to extend to L2 as well. Moreover, it is known that the performance of L1 is highly sensitive to the choice of the cut-off sequence $m=m(n)$ (see again  \cite[Section 4.4]{Resnick:2007}). An inappropriate choice of $m(n)$ may thus lead to substantial bias.  Tables \ref{table:stable0.5} and \ref{table:frechet0.5} suggest  that this issue arises not only for L1 but also for L2. 

%It is known that the performance of the Hill estimator could be less revealing when the underlying distribution is not the Pareto distribution (see \cite[Page 86-87]{Resnick:2007}), and this is also the case for the layered Hill estimator. Additionally, it is well known that the performance of the Hill estimator is highly sensitive to the choice of $m$. Since the construction of the layered Hill estimator generalizes that of the classical Hill estimator, the layered Hill estimator is also sensitive to the choice of $m$.

\begin{table}[t]
\centering
\begin{minipage}{\textwidth}  % First minipage with full text width
\centering
\caption{Stable, $\alpha = 0.5$}
\label{table:stable0.5}
\vspace{-2pt}
\resizebox{0.85\textwidth}{!}{%
\begin{tabular}{ l |c  c  c | c  c  c | c  c  c}
\hline \hline
& L1  & L1  & L1  & L2 & L2 & L2 & Mix & Mix & Mix\\
& $\delta=0$ & $\delta=0.5$ & $\delta=1$ & $\delta=0$ & $\delta=0.5$ & $\delta=1$ & $\delta=0$ & $\delta=0.5$ & $\delta=1$ \\
\hline
$m=n^{0.1}$ & $1.097$  & $1.763$ & $2.366$  & $0.511$ & $ 0.511$ & $0.511$ & $0.804$  & $1.137$ & $1.439$\\
& $(3.350)$ & $(3.363)$ & $(4.262)$   & $(0.237)$ & $(0.237 )$ & $(0.237)$ & $(1.794)$ & $(1.800)$ & $(2.250)$  \\
\hline
$m=n^{0.3}$ & $0.542$ & $1.167$ & $1.639$ & $0.360$ & $0.360$ & $0.360$ & $0.451$ & $0.764$ & $1.000$ \\
& $(0.162)$ & $(0.735)$ & $(1.239)$  & $(0.155)$ & $(0.155)$ & $(0.155)$ & $(0.159)$ & $(0.445)$ & $(0.697)$\\
\hline
$m=n^{0.5}$ & $ 0.503$ & $1.114$ & $1.611$  & $0.174$ & $0.174$ & $0.174$ & $0.339$ & $0.644$ & $0.893$ \\
& $(0.052)$ & $(0.623)$ & $(1.121)$   & $( 0.328)$ & $(0.328)$ & $(0.328)$ & $(0.190)$ & $(0.471)$ & $(0.725)$ \\
\hline \hline
\end{tabular}
}
\end{minipage}

\vspace{3em}  % Add vertical space between the two minipages

\begin{minipage}{\textwidth}  % Second minipage with full text width
\centering
\caption{Fr\'echet, $\alpha = 0.5$}
\label{table:frechet0.5}
\vspace{-2pt}
\resizebox{0.85\textwidth}{!}{%
\begin{tabular}{ l | c  c  c | c  c  c | c  c  c}
\hline \hline
& L1  & L1  & L1  & L2 & L2 & L2 & Mix & Mix & Mix\\
& $\delta=0$ & $\delta=0.5$ & $\delta=1$ & $\delta=0$ & $\delta=0.5$ & $\delta=1$ & $\delta=0$ & $\delta=0.5$ & $\delta=1$ \\
\hline
$m=n^{0.1}$ & $0.965$  & $1.684$ & $2.385$  & $0.491$ & $0.491$ & $0.491$ & $0.593$  & $1.088$ & $1.438$ \\
& $(1.365)$ & $(2.832)$ & $(3.979)$ & $(0.219)$ & $(0.219)$ & $(0.219)$   & $(0.792)$  & $(1.526)$  & $(2.099)$ \\
\hline
$m=n^{0.3}$ & $0.539 $ & $1.174$ & $1.683$ & $0.348$ & $0.348$ & $0.348$ & $0.444$ & $0.761$ & $1.016$ \\
& $(0.143)$ & $(0.762)$ & $(1.294)$  & $(0.167)$ & $(0.167)$ & $(0.167)$ & $(0.155)$ & $(0.465)$ & $(0.731)$\\
\hline
$m=n^{0.5}$ & $ 0.503$ & $1.107$ & $1.630$   & $0.154$ & $0.154$ & $0.154$ & $0.329$ & $0.631$ & $0.892$\\
& $(0.051)$ & $(0.614)$ & $(1.140)$  & $( 0.341)$ & $(0.341)$ & $(0.341)$ & $(0.196)$ & $(0.478)$ & $(0.741)$ \\
\hline \hline
\end{tabular}
}
\end{minipage}
\end{table}

\subsection{Asymptotic normal curve}

Figures \ref{fig:normal.density.delta=0}, \ref{fig:normal.density.delta=0.5}, and \ref{fig:normal.density.delta=1} below investigate how well the empirical densities constructed from Theorem \ref{t:adjust.centering} fit the standard normal curve. We set $n=10,000$, $m=n^{0.3}$, and use the density function  \eqref{e:power-law.simulation} with $\alpha=2.5$, considering three different missing rates: $\delta = 0, 0.5$, and $1$. The red, blue, and purple curves represent the empirical densities of the first layered (L1), second layered (L2), and mixture layered (Mix) Hill estimators, respectively. 
In each case, Theorem \ref{t:adjust.centering} suggests how to normalize our estimators, but some of the constants in \eqref{e:CLT.non-random.centering} still depend on the unknown parameter $\alpha$. Therefore, for normalization, we need to replace $\alpha$ in these constants with $\hat{\alpha}$, a consistent estimator  obtained from the left-hand side in \eqref{e:consis.alpha}. 

Additionally, we need to determine which variance term in Theorem \ref{t:adjust.centering} should be used for normalization—this depends on whether the limit of $nf\big( \Rk \big)$ is $0$, constant, or infinite. 
For this purpose, a heuristic yet practical algorithm is proposed. Since the density $f$ has a regular variation exponent $\alpha$, \eqref{e:def.radius.function.R} implies, heuristically, that by ignoring possible slowly varying terms in \eqref{e:RV.f}, one can see that $n^kR_k(n)^{d-\alpha k}$ is asymptotically constant as $n\to\infty$. That is, $R_k(n) \approx n^{k/(\alpha k-d)}$ up to constant factors when $n$ is large enough. Given this regular variation assumption and the heuristic asymptotics, if we set $m=n^\beta$ for some $\beta\in (0,1)$, then for some constant $C>0$ and large enough $n$, 
$$
nf\big( \Rk \big) \approx C n\Rk^{-\alpha} \approx Cn^{(\alpha\beta k -d)/(\alpha k-d)}.
$$ 
If $\hat{\alpha}$ denotes a (consistent) estimate of $\alpha$ obtained from \eqref{e:consis.alpha},  we can expect that
\begin{align*}
n f \lp \Rk \rp \to \begin{cases}
0 & \text{ if } \beta < d/(\hat \alpha k), \\
\text{constant} & \text{ if } \beta = d/(\hat \alpha k), \\
\infty & \text{ if } \beta > d/(\hat \alpha k).
\end{cases}
\end{align*}
Once the regime in Theorem \ref{t:adjust.centering} is determined, we replace $\alpha$ in $\tau_{k,n}$ and the constant $A_{k,\ell,\alpha}$ with its estimate $\hat{\alpha}$ and compute kernel density curves for the normalized layered Hill estimators.

In the current simulation scheme, we have $k = 1$ or $2$, $d = 2$, and $\beta = 0.3$. Substituting these values, along with the estimate $\hat{\alpha}$, which is approximately $2.5$ as shown in Table \ref{table: pareto 2.5}, we conclude that the first regime in Theorem \ref{t:adjust.centering} (i.e., $nf\big( \Rk \big) \to 0$) applies to both $k = 1$ and $k = 2$. Figure \ref{fig:normal.density.delta=0} illustrates the case with no missing extremes, i.e., $\delta = 0$. In this setting, the kernel density curves for both L1 and L2 shows a good fit to the standard normal density. However, as seen in Figures \ref{fig:normal.density.delta=0.5} and \ref{fig:normal.density.delta=1}, the kernel density curve for L1 becomes significantly biased when extremes are missing, while L2 continues to appropriately capture the standard normal density.

\begin{figure}[t]
\begin{minipage}{\textwidth}  % full text width
\centering
\includegraphics[scale=0.452185]{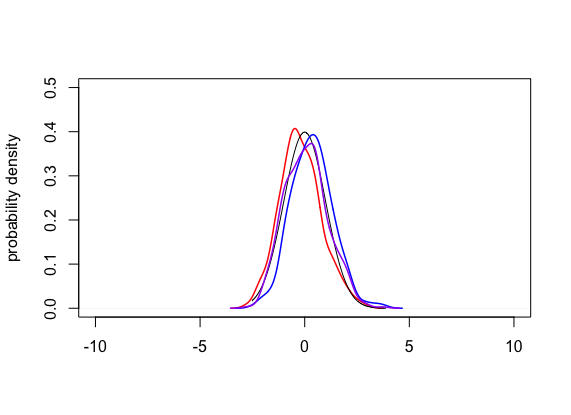}
\vspace{-30pt}
\caption{\footnotesize{Kernel density curves of the normalized layered Hill estimators without missing values (i.e., $\delta = 0$). The black curve represents the  density function of the standard normal distribution. The red curve is the kernel density estimate for the first layered Hill estimator, the blue curve for the second layered Hill estimator, and the purple curve for the mixture of the two.}}
\label{fig:normal.density.delta=0}
\end{minipage}

\vspace{3.5pt}

\begin{minipage}{\textwidth}  % full text width
\centering
\includegraphics[scale=0.452185]{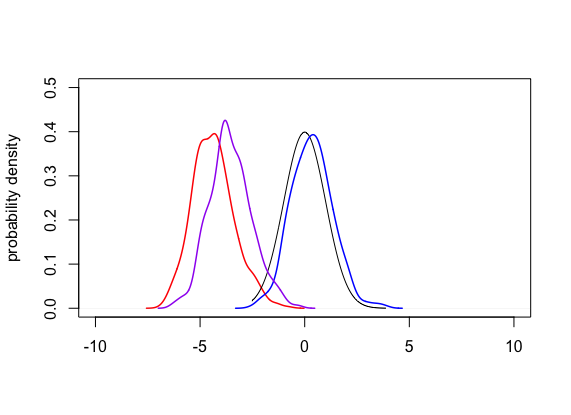}
\vspace{-30pt}
\caption{\footnotesize{ Kernel density curves of the normalized layered Hill estimators with  missing rate $\delta = 0.5$.}}
\label{fig:normal.density.delta=0.5}
\end{minipage}

\vspace{3.5pt}

\begin{minipage}{\textwidth}  % full text width
\centering
\includegraphics[scale=0.452185]{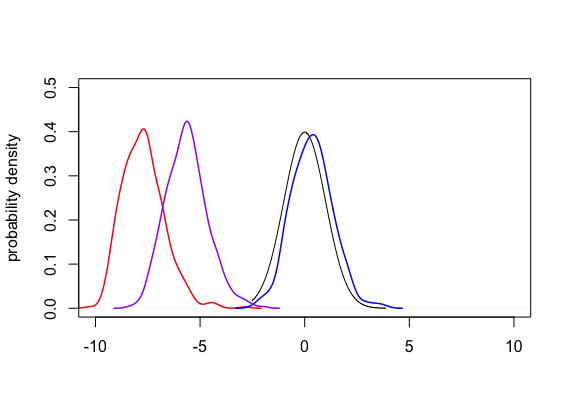}
\vspace{-30pt}
\caption{\footnotesize{Kernel density curves of the normalized layered Hill estimators with  missing rate $\delta = 1$.}}
\label{fig:normal.density.delta=1}
\end{minipage}
\end{figure}

\subsection{Coverage rates for confidence interval}  \label{sec:coverage.rate}

Next we propose a method to construct an asymptotic confidence interval for the tail exponent $\alpha$. In the same simulation setting as  the last subsection, the discussion below assumes the first regime (i.e., $nf\big(  \Rk\big) \to 0$) in Theorem \ref{t:adjust.centering}. Then, $\tau_{k,n}=m^k$, and let $A_{k,k,\hat{\alpha}}$ be the estimated value of $A_{k,k,\alpha}$ obtained by replacing $\alpha$ with $\hat{\alpha}$. 
Given a standard normal random variable $Z$ and a confidence level $\gamma \in (0,1)$, define $c_L := -z_{1 - \gamma/2}$ and $c_U := z_{1 - \gamma/2}$, where $z_{1- \gamma/2}$ is the $(1 - \gamma/2)$-quantile of $Z$. Then, $\Pr (c_L \le Z \le c_U) = \gamma$ and  Theorem \ref{t:adjust.centering}  implies that 
$$
\Pr \Big(c_L \le m^{k/2} A_{k,k,\hat{\alpha}}^{-1/2} \big( H_{k,m,n}-(\alpha k - d)^{-1} \big) \le c_U \Big)
$$
is asymptotically equal to $\gamma$. Consequently, the corresponding asymptotic confidence interval for $\alpha$ is given by 
\begin{align*}
\frac{1}{k} \left( \frac{1}{H_{k,m,n}-c_L m^{-k/2}A_{k,k,\hat{\alpha}}^{1/2}} + d \right) \le \alpha \le \frac{1}{k} \left( \frac{1}{H_{k,m,n}-c_U m^{-k/2}A_{k,k,\hat{\alpha}}^{1/2}} + d \right).
\end{align*}
\vspace{0pt}

Table \ref{table:coverage.rate} presents the coverage rates of the $95\%$ confidence interval, when random points are drawn from the Pareto distribution in \eqref{e:power-law.simulation} with $\alpha = 2.5$, $5$, and $7.5$. The setups for this table are identical to those in Tables \ref{table: pareto 2.5}, \ref{table: pareto 5}, and \ref{table: pareto 7.5}, respectively. According to Table \ref{table:coverage.rate}, when no extremes are missing, i.e., $\delta = 0$, the simulated coverage rates are close to $0.95$ for both estimators. However, as $\delta$ becomes positive, L1 fails to produce an accurate confidence interval due to substantial bias, as shown in Figures \ref{fig:normal.density.delta=0.5} and \ref{fig:normal.density.delta=1}. Despite this, L2 maintains high coverage rates close to $0.95$ even in the presence of missing extremes.

\begin{table}[t]
\centering
\caption{Coverage rate in Pareto case}
\label{table:coverage.rate}
\vspace{-2pt}
\begin{tabular}{ l|  c  c  c | c  c  c }
\hline \hline
& L1  & L1  & L1 &  L2 & L2 & L2  \\
& $\delta=0$ & $\delta=0.5$ & $\delta=1$ & $\delta=0$ & $\delta=0.5$ & $\delta=1$  \\
\hline
$\alpha = 2.5$ & $0.938$ & $0.016$ & $0.000$  & $0.930$ & $0.930$ & $0.930$   \\  
$\alpha = 5$ & $0.920$ & $0.022$ & $0.006$  & $0.914$ & $0.917$ & $0.915$ \\
$\alpha = 7.5$ & $0.911$ & $0.026$ & $0.008$  & $0.892$ & $0.896$ & $0.894$ \\
\hline \hline
\end{tabular}
\medskip
\end{table}

\medskip

% & Mix & Mix & Mix 
% & $\delta=0$ & $\delta=0.5$ & $\delta=1$
% & $0.952$ & $0.068$ & $0.002$
%  & $0.932$ & $0.058$ & $0.007$
% & $0.914$ & $0.062$ & $0.011$

\section{Proof of the Results}
\label{sec:proofs}

Throughout this section, denote by $C^*$ a generic and positive constant, which is independent of $n$ but may vary between and within the lines. We begin with the lemma regarding the expectation and covariance asymptotics of $\nu_{k,n}(\phi)$ for $\phi \in C_c(E_k)$ (i.e., the space of continuous functions on $E_k$ with compact support). 

\begin{lemma}  \label{l:exp.var.asym}
$(i)$ For every $k\ge1$ and $\phi \in C_c(E_k)$, 
$$
m^{-k} \E \big[ \nu_{k,n}(\phi) \big] \to \mu_k (\phi), \ \ \ n\to\infty. 
$$
$(ii)$ For every $k\ge1$ and $\phi_1, \phi_2 \in C_c(E_k)$, 
$$
\tau_{k,n}^{-1} \Cov \big( \nu_{k,n}(\phi_1), \, \nu_{k,n}(\phi_2) \big) \to V_k(\phi_1, \phi_2),  \ \ \ n\to\infty, 
$$
where $(\tau_{k,n})_{n\ge1}$ and  $V_k(\phi_1, \phi_2)$ are defined respectively in \eqref{e:def.tau_kn} and \eqref{e:def.Vk}. 
\end{lemma}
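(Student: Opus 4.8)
The plan is as follows; write $R:=R_k(C_kn/m)$ throughout.

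\emph{Part $(i)$.} The multivariate Mecke formula \cite[Theorem 4.4]{Last;Penrose:2017} gives
\[
\E\big[\nu_{k,n}(\phi)\big]=\frac{n^k}{k!}\int_{(\R^d)^k}h_k(\bm y)\,\phi(\bm y/R)\prod_{i=1}^{k}f(|y_i|)\,\d\bm y .
\]
I would then substitute $y_1=Rx$ and $y_{i+1}=Rx+z_i$ for $1\le i\le k-1$ (the Jacobian is $R^d$), and use translation invariance of $h_k$ to replace $h_k(\bm y)$ by $h_k(0,z_1,\dots,z_{k-1})$, so that $m^{-k}\E[\nu_{k,n}(\phi)]$ becomes an integral over $(x,\bz)$ whose integrand carries the factor $m^{-k}n^kR^d f(R|x|)\prod_{i=1}^{k-1}f(|Rx+z_i|)$. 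Now \eqref{e:def.radius.function.R} with $t=C_kn/m$ yields $(n/m)^kR^df(R)^k\to(\alpha k-d)/C_k^k$, while the uniform convergence theorem for regularly varying functions together with \eqref{e:RV.f} gives $f(R|x|)/f(R)\to|x|^{-\alpha}$, $f(|Rx+z_i|)/f(R|x|)\to1$, and $\phi(x,x+z_1/R,\dots,x+z_{k-1}/R)\to\phi(x,\dots,x)$. Hence the integrand converges pointwise to $\tfrac{1}{k!}h_k(0,\bz)\phi(x,\dots,x)\tfrac{\alpha k-d}{C_k^k}|x|^{-\alpha k}$, and I would justify passing to the limit by dominated convergence via Potter's bounds, using that $\supp\phi$ keeps $|x|$ bounded away from $0$, that $\alpha k>d$ makes $|x|^{-\alpha k}$ integrable near infinity, and that the compact support of $h_k$ keeps each $z_i$ in a fixed bounded set, so the ratios $f(|Rx+z_i|)/f(R|x|)$ stay uniformly controlled. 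Finally, integrating out $\bz$ using $\int_{(\R^d)^{k-1}}h_k(0,\bz)\,\d\bz=k!\,C_k^k/s_{d-1}$ produces exactly $\mu_k(\phi)$.

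\emph{Part $(ii)$.} I would expand $\E[\nu_{k,n}(\phi_1)\nu_{k,n}(\phi_2)]$ with the multivariate Mecke formula and group the double sum over ordered $k$-tuples $(\bm y,\bm z)$ of points of $\Pn$ by their overlap pattern, i.e.\ the partial matching identifying $\ell$ coordinates of $\bm y$ with $\ell$ of $\bm z$, $0\le\ell\le k$. The $\ell=0$ part equals $\E[\nu_{k,n}(\phi_1)]\E[\nu_{k,n}(\phi_2)]$ and cancels in the covariance; for each $\ell\ge1$ there are $\binom{k}{\ell}^2\ell!$ patterns, all contributing equally in the limit because, after rescaling, the arguments of $\phi_1$ and $\phi_2$ both collapse onto the diagonal (and $h_k$ is permutation invariant). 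For a fixed pattern I would change variables using one shared point as anchor and $2k-\ell-1$ difference variables (Jacobian $R^d$); translation invariance turns the two indicators into $h_k(0,z_1,\dots,z_{k-1})$ and $h_k(0,z_1,\dots,z_{\ell-1},z_k,\dots,z_{2k-\ell-1})$, whose joint integral over $\bz$ is precisely $D_{k,\ell}$ from \eqref{e:def.Dkl}. The density prefactor $n^{2k-\ell}R^d\prod_j f(|w_j|)$ behaves, by \eqref{e:def.radius.function.R} and regular variation, like $m^k\,\big((n/m)^kR^df(R)^k\big)\,\big(nf(R)\big)^{k-\ell}\,|x|^{-\alpha(2k-\ell)}$, and since $\binom{k}{\ell}^2\ell!/(k!)^2=1/(\ell!\,((k-\ell)!)^2)$, the same type of dominated-convergence argument (now with $\alpha(2k-\ell)\ge\alpha k>d$) yields
\[
m^{-k}\times(\ell\text{-th term})\longrightarrow\frac{(\alpha k-d)\,D_{k,\ell}}{\ell!\,((k-\ell)!)^2\,C_k^k}\,\Big(\lim_{n\to\infty}\big(nf(R)\big)^{k-\ell}\Big)\int_{\R^d}\phi_1(x,\dots,x)\phi_2(x,\dots,x)|x|^{-\alpha(2k-\ell)}\,\d x .
\]

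It then remains to normalize by $\tau_{k,n}$ rather than $m^k$ and read off the phase transition. If $nf(R)\to0$, the factor $(nf(R))^{k-\ell}$ annihilates every $\ell<k$ term and $\tau_{k,n}=m^k$, leaving only $\ell=k$; if $nf(R)\to\xi\in(0,\infty)$, all terms survive with weights $\xi^{k-\ell}$ and $\tau_{k,n}=m^k$; if $nf(R)\to\infty$, then $\tau_{k,n}=m^k(nf(R))^{k-1}$ and $\tau_{k,n}^{-1}\times(\ell\text{-th term})$ is asymptotic to $(nf(R))^{1-\ell}$ times a constant, so only $\ell=1$ remains. Matching the surviving constants against the three cases of \eqref{e:def.Vk} concludes the proof. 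I expect the main obstacle to be twofold: organizing the overlap-pattern expansion of the covariance cleanly enough that the geometric constants collapse exactly to the $D_{k,\ell}$ of \eqref{e:def.Dkl}, and establishing a domination that is uniform in $n$ so that the limit can be taken inside all of these integrals simultaneously in each of the three regimes. Both are handled by combining \cite[Theorem 4.4]{Last;Penrose:2017} with Potter-type estimates, but the combinatorial bookkeeping is the delicate part.
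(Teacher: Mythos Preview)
Your proposal is correct and follows essentially the same route as the paper. The only differences are organizational: you combine the paper's two-step change of variables (translation to the anchor point, then polar coordinates with rescaling) into a single substitution $y_1=Rx$, $y_{i+1}=Rx+z_i$; and for part $(ii)$ you count overlap patterns of ordered tuples directly, whereas the paper first symmetrizes via $g_k^{(i)}(\Y)=\sum_\omega\phi_i(R^{-1}y_{\omega(\cdot)})h_k(y_{\omega(\cdot)})$, applies the Mecke formula to the sum over unordered sets, and then observes that all $(k!)^2$ permutation pairs yield the same limit --- both routes land on the coefficient $1/(\ell!\,((k-\ell)!)^2)$ and the integral $D_{k,\ell}$, and both invoke Potter's bounds for the dominated-convergence step.
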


Given $\phi\in C_c(E_k)$, let 
\begin{equation}  \label{e:symmetrization}
\widetilde \phi(y_1,\dots,y_k) := \frac{1}{k!}\, \sum_{\sigma} \phi (y_{\sigma(1)}, \dots, y_{\sigma(k)}), \ \ \ (y_1,\dots,y_k) \in E_k, 
\end{equation}
be the symmetrization of $\phi$, where $\sigma$ ranges over all permutations on $\{ 1,\dots,k \}$. Since $\widetilde \phi$ is permutation invariant, one can define $\widetilde\phi(\Y):=\widetilde\phi(y_1,\dots,y_k)$ for every $k$-point set $\Y=\{y_1,\dots, y_k\}\subset \R^d$. Due to permutation invariance of $h_k$, it now holds that for any  $\phi\in C_c(E_k)$, 
\begin{equation}  \label{e:nukn.no.permutation.expression}
\nu_{k,n}(\phi) = \sum_{\Y\subset \Pn, \, |\Y|=k} h_k (\Y)\widetilde \phi\big( R_k(C_kn/m)^{-1}\Y \big). 
\end{equation}

%Note that for any measurable function $\phi$ on $E_k$, it holds that $\nu_{k,n}(\phi) = \nu_{k,n}(\tilde{\phi})$, where $\tilde{\phi}$ is the symmetrization of $\phi$ defined by $\tilde{\phi}(\bm{y}) := \sum_{\sigma} \phi(\bm{y}_{\sigma})$, where $\sigma$ ranges over all permutations on $\{1, \dots, k\}$ and $\bm{y}_{\sigma} = (y_{\sigma(1)}, \dots, y_{\sigma(k)})$ is obtained by permuting the order of $\bm{y} = (y_1, \dots, y_k)$. Thus, throughout this section, it suffices to assume that every function $\phi$ is symmetric.

\begin{proof}
\noindent{\underline{Proof of $(i)$}} \\
By the multivariate Mecke formula for Poisson point processes (see, e.g., \cite[Theorem 4.4]{Last;Penrose:2017}) and the expression in \eqref{e:nukn.no.permutation.expression}, 
\begin{equation}  \label{e:multi.Mecke}
m^{-k} \E \big[ \nu_{k,n}(\phi) \big] = \frac{1}{k!} \Big(\frac{n}{m} \Big)^k  \int_{(\R^d)^k} \widetilde \phi \lp R_k(C_k n/m)^{-1} \by \rp h_k(\by) \prod_{i=1}^k f(y_i) \d \by, 
\end{equation}
where $\by=(y_1,\dots,y_k)$,  $y_i\in \R^d$, $i=1\dots,k$. 
Performing change of variables by $x=y_1$ and $z_i = y_{i+1}-y_1$ for $i=1,\dots,k-1$ and using the translation invariance of $h_k$, the expression \eqref{e:multi.Mecke} can be written as 
\begin{align}
\frac{1}{k!} \Big(\frac{n}{m} \Big)^k  \int_{x \in \R^d} & \int_{\bz  \in (\R^d)^{k-1}}  \widetilde \phi \lp R_k(C_kn/m)^{-1} (x, x+z_1, \dots, x+ z_{k-1} ) \rp h_k(0, \bz) \, \label{e:first.change.of.variables} \\
& \times f(x) \prod_{i=1}^{k-1} f(x+z_i) \d \bz \d x.\notag
\end{align}
Applying the polar coordinate transform    $x = r \theta$ with $r \in (0, \infty)$ and $\theta \in \mathbb{S}^{d-1}$,  the above expression equals 
\begin{align}
\frac{1}{k!} \Big(\frac{n}{m}\Big)^k \int_{r =0}^{\infty} \int_{\theta \in \mathbb{S}^{d-1}} \int_{\bz \in (\R^d)^{k-1}} &  \widetilde \phi \lp R_k(C_kn/m)^{-1}(r \theta,  r \theta+z_1, \dots,  r \theta+z_{k-1} ) \rp \label{e:polar.coordinate.r} \\
& \times h_k(0, \bz) f(r ) \prod_{i=1}^{k-1} f(  r \theta+z_i) r^{d-1} \d \bz \d \sigma(\theta) \d r, \notag
\end{align}
where $\sigma$ denotes the surface measure on $\mathbb{S}^{d-1}$ induced from the Lebesgue measure on $\R^d$. By an additional change of variable $\rho=R_k(C_kn/m)^{-1}r$, \eqref{e:polar.coordinate.r} is further equal to 
\begin{align}
&\frac{1}{(C_k)^k k!} \Big(\frac{C_kn}{m}\Big)^k R_k (C_kn/m)^d f\big( R_k(C_kn/m) \big)^k \label{e:final.form.expectation}  \\
& \times \int_{\rho = 0}^{\infty} \int_{\theta \in \mathbb{S}^{d-1}} \int_{\bz \in (\R^d)^{k-1}}  \widetilde \phi \lp \rho \theta, \rho\theta + \Rk^{-1} z_1 , \dots, \rho \theta+ \Rk^{-1} z_{k-1}  \rp   \notag  \\
& \times h_k(0, \bz) \frac{f \lp R_k(C_kn/m) \rho  \rp}{f \lp R_k(C_kn/m)  \rp} \prod_{i=1}^{k-1} \frac{f \lp R_k(C_kn/m) |\rho\theta +  R_k(C_kn/m)^{-1}z_i |\rp}{f \lp R_k(C_kn/m) \rp} \rho^{d-1} \d \bz \d \sigma(\theta) \d \rho. \notag 
\end{align}
It follows from \eqref{e:RV.f} and \eqref{e:def.radius.function.R} that  the integrand in \eqref{e:final.form.expectation} converges to 
$$
\frac{\alpha k-d}{(C_k)^k k!} \, \phi(\rho \theta, \dots, \rho \theta) h_k(0,\bz) \rho^{-\alpha k +d-1}, \ \ \text{as } n\to\infty, 
$$
for every $\rho>0$, $\theta \in \mathbb{S}^{d-1}$, and $\bz\in (\R^d)^{k-1}$. 
Since the function $\phi$ has compact support, the range of $\rho$ can be restricted to the interval $[\delta, \infty)$ for some $\delta > 0$. By this property and Potter's bound (see Lemma \ref{l:potter} in the Appendix), we obtain that for any $0 < \vep < \alpha - d$, 
\begin{equation}  \label{e:Potter1}
\frac{f \lp R_k(C_kn/m) \rho \rp}{f \lp R_k(C_kn/m) \rp}  \leq ( 1+ \vep) \rho^{-\alpha + \vep}, 
\end{equation}
and for each $i=1,\dots,k-1$, 
\begin{equation}  \label{e:Potter2}
\frac{f \lp R_k(C_kn/m) | \rho \theta + R_k(C_kn/m)^{-1} z_i|  \rp}{f \lp R_k(C_kn/m)^{-1}  \rp}  \leq C^*, 
\end{equation}
for all large enough $n$. Using the bounds in \eqref{e:Potter1} and \eqref{e:Potter2}, we can apply the dominated convergence theorem to conclude that  \eqref{e:final.form.expectation} converges to 
$$
\frac{\alpha k-d}{(C_k)^k k!}\, \int_{\rho = 0}^{\infty} \int_{\theta \in \mathbb{S}^{d-1}} \int_{\bz \in (\R^d)^{k-1}} \phi( \rho \theta, \dots, \rho \theta) h_k(0, \bz) \rho^{-\alpha k + d - 1} \d \bz \d \sigma(\theta) \d \rho = \mu_k (\phi), 
$$
as desired. 
\vspace{5pt}

\noindent{\underline{Proof of $(ii)$}} 
For a $k$-point set $\Y= \{ y_1,\dots,y_k \}\subset \R^d$, define 
$$
g_k^{(i)}(\Y) :=   h_k ( \mathcal{Y})\, \widetilde  \phi_i \big( R_k(C_kn/m)^{-1} \mathcal{Y} \big), \ \ \ i\in \{1,2\}, 
$$
where $\widetilde \phi_i$ is the symmetrization of $\phi_i$. Then, 
%For brevity, we define, for every $k$-point set $\Y=\{ y_1,\dots,y_k\} \subset \R^d$ with $y_i \in \R^d$,
%\[
%\phi_i(\mathcal{Y}) := \phi_i(y_1, \dots, y_k),
%\]
%which is well-defined since $\phi_i$ is symmetric and
%Then, we can write
$$
\nu_{k,n}(\phi_i) = \sum_{\Y\subset \Pn, \, |\Y|=k} g_k^{(i)}(\Y), \ \ \ i\in \{1,2\}. 
$$
By the Mecke formula for Poisson point processes (see Lemma 8.1 in \cite{Owada:2017}), 
\begin{align}
&\tau_{k,n}^{-1}\Cov\big( \nu_{k,n}(\phi_1), \, \nu_{k,n}(\phi_2) \big) \label{e:Mecke.variance} \\
&= \tau_{k,n}^{-1} \, \sum_{\ell=0}^k \E \bigg[ \sum_{\Y\subset \Pn, \, |\Y|=k} \sum_{\substack{\Y'\subset \Pn, \, |\Y'|=k, \\ |\Y\cap \Y'|=\ell}} g_k^{(1)}(\Y)\, g_k^{(2)} (\Y')  \bigg] -\tau_{k,n}^{-1} \E\big[ \nu_{k,n}(\phi_1) \big]\E\big[ \nu_{k,n}(\phi_2) \big] \notag  \\
&= \tau_{k,n}^{-1} \, \sum_{\ell=1}^k \E \bigg[ \sum_{\Y\subset \Pn, \, |\Y|=k} \sum_{\substack{\Y'\subset \Pn, \, |\Y'|=k, \\ |\Y\cap \Y'|=\ell}} g_k^{(1)}(\Y)\, g_k^{(2)} (\Y')  \bigg] \notag \\
&= \sum_{\ell=1}^k  \frac{\tau_{k,n}^{-1} n^{2k-\ell}}{\ell! ((k-\ell)!)^2}\, \E \big[ g_k^{(1)} \big( \big\{ X_1,\dots,X_k\}\big) \, g_k^{(2)}\big( \{X_1,\dots, X_\ell, X_{k+1}, \dots, X_{2k-\ell}\}\big) \big], \notag 
\end{align}
where $X_1,\dots,X_{2k-\ell}$ are i.i.d.~random variables with common density $f$. The expectation term in \eqref{e:Mecke.variance} can be expressed as 
\begin{align*}
&\E \big[ g_k^{(1)} \big( \big\{ X_1,\dots,X_k\}\big) \, g_k^{(2)}\big( \{X_1,\dots, X_\ell, X_{k+1}, \dots, X_{2k-\ell}\}\big) \big] \\ 
&\quad = \int_{(\R^d)^{2k - \ell}} \widetilde \phi_1 \big( R_k(C_kn/m)^{-1}(y_1, \dots, y_k) \big)\,  h_k(y_1, \dots, y_k) \\
&\quad  \qquad \times \widetilde \phi_2 \big( R_k(C_kn/m)^{-1} ( y_1, \dots, y_{\ell}, y_{k+1}, \dots, y_{2k - \ell}) \big) \\
&\quad  \qquad \times h_k (y_1. \dots, y_{\ell}, y_{k+1}, \dots, y_{2k - \ell})  \prod_{i=1}^{2k - \ell} f(y_i) \d \by. 
\end{align*}
Now, we compute the limit of
\begin{align}
&\tau_{k,n}^{-1}n^{2k-\ell} \int_{(\R^d)^{2k-\ell}}\widetilde  \phi_1 \big( R_k(C_kn/m)^{-1}(y_1, \dots, y_k)\big)\,  h_k(y_1, \dots, y_k) \label{e:permutation.to.identity}\\
& \times \widetilde \phi_2 \big( R_k(C_kn/m)^{-1} ( y_1, \dots, y_\ell, y_{k+1}, \dots, y_{2k-\ell}) \big) \notag\\
& \times h_k (y_1. \dots, y_\ell, y_{k+1}, \dots, y_{2k - \ell})  \prod_{i=1}^{2k - \ell} f(y_i) \d \by, \notag
\end{align}
for every $\ell=1,\dots,k$. 
By repeating calculations based on  the same change of variables as in \eqref{e:first.change.of.variables}--\eqref{e:final.form.expectation}, the expression in \eqref{e:permutation.to.identity}  is equal to 
\begin{align*}
&\tau_{k,n}^{-1} n^{2k-\ell}R_k(C_kn/m)^d f \lp R_k(C_kn/m)  \rp^{2k-\ell} \int_{\rho = 0}^{\infty} \int_{\theta \in \mathbb{S}^{d-1}} \int_{\bz \in (\R^d)^{2k - \ell - 1}} \\
&\qquad \widetilde \phi_1 \big( \rho \theta, \rho \theta + R_k(C_kn/m)^{-1} z_1, \dots, \rho \theta + R_k(C_kn/m)^{-1} z_{k-1} \big)\,  h_k(0, z_1, \dots, z_{k-1}) \\
&\qquad \times \widetilde \phi_2 \big( \rho \theta, \rho \theta  + R_k(C_kn/m)^{-1} z_1, \dots, \rho \theta  + R_k(C_kn/m)^{-1} z_{\ell - 1}, \\
&\qquad  \qquad\qquad \qquad\qquad \qquad\rho \theta  + R_k(C_kn/m)^{-1} z_k, \dots, \rho \theta +  R_k(C_kn/m)^{-1} z_{2k - \ell - 1}  \big) \\
&\qquad  \times h_k(0, z_1, \dots, z_{\ell - 1}, z_k, \dots, z_{2k - \ell - 1}) \frac{f \lp R_k(C_kn/m) \rho\rp}{f \lp R_k(C_kn/m)  \rp} \\
&\qquad \times  \prod_{i=1}^{2k - \ell - 1} \frac{f \lp R_k(C_kn/m) |\rho \theta + R_k(C_kn/m)^{-1}z_i|  \rp}{f \lp R_k(C_kn/m) \rp} \rho^{d-1} \d \bz \d \sigma(\theta) \d \rho.
\end{align*}
By the regular variation property \eqref{e:RV.f}, and condition \eqref{e:def.radius.function.R}, as well as the Potter bounds as in \eqref{e:Potter1} and \eqref{e:Potter2} (for the application of the dominated convergence theorem), we find that for every $\ell=1,\dots,k$, the expression above is asymptotically equal to, as $n \to \infty$,
\begin{align*}
&\frac{n^{2k-\ell}}{\tau_{k,n}} \Rk^d f\big(\Rk \big)^{2k-\ell} D_{k,\ell} \int_{\R^d} \phi_1(x,\dots,x) \phi_2(x,\dots,x)|x|^{-\alpha (2k-\ell)} \d x   \\
&\sim  \frac{m^k}{\tau_{k,n}} \Big( nf \big(\Rk \big)\Big)^{k-\ell} \frac{(\alpha k-d)D_{k,\ell}}{(C_k)^k}\, \int_{\R^d} \phi_1(x,\dots,x) \phi_2(x,\dots,x) |x|^{-\alpha (2k-\ell)} \d x, \ \ \  
\end{align*} 
where $D_{k,\ell}$ is a constant given in \eqref{e:def.Dkl}. We obtain, as $n \to \infty$, 
\begin{align} 
\begin{split} \label{e:limit.var.expression}
\tau_{k,n}^{-1} \Cov\big( \nu_{k,n}(\phi_1), \, \nu_{k,n}(\phi_2) \big) &\sim \sum_{\ell=1}^k \frac{\tau_{k,n}^{-1} m^k \big( nf(\Rk) \big)^{k-\ell}}{\ell! ((k-\ell)!)^2}\\
& \;  \times \frac{(\alpha k-d)D_{k,\ell}}{(C_k)^k} \int_{\R^d} \phi_1(x,\dots,x) \phi_2(x,\dots,x) |x|^{-\alpha (2k-\ell)} \d x. 
\end{split}
\end{align}
Note that which term on the right-hand side of \eqref{e:limit.var.expression} dominates depends on the behavior of $nf(\Rk)$ as $n \to \infty$. Specifically, if $nf(\Rk) \to 0$, the $k$th term (with $\ell = k$) dominates, while if $nf(\Rk) \to \infty$, the term with $\ell = 1$ becomes dominant. Furthermore, if $nf(\Rk) \to \xi \in (0, \infty)$, all terms will contribute in the limit. In all three cases, by the definition of $(\tau_{k,n})_{n \ge 1}$, it can be shown that \eqref{e:limit.var.expression} converges to $V_k(\phi_1, \phi_2)$.
\end{proof}

\subsection{Proof of Consistency}

Here we prove the results in Section \ref{sec:consistency}. 

\begin{proof}[Proof of Proposition \ref{p:consistency.tail.measure} $(i)$]
By Lemma \ref{l:exp.var.asym} $(i)$, we have $m^{-k} \E \big[ \nu_{k,n}(\phi) \big] \to \mu_k (\phi)$ as $n\to\infty$, for every $\phi\in C_c^+(E_k)$. It is easy to see that $\tau_{k,n}/m^{2k} \to 0$ as $n\to\infty$, and hence, Lemma \ref{l:exp.var.asym} $(ii)$ yields that 
$m^{-2k} \text{Var}\big(  \nu_{k,n}(\phi) \big) \to 0$ as $n \to \infty$. 
Now, the Chebyshev's inequality concludes that $m^{-k}\nu_{k,n}(\phi) \stackrel{\Pr}{\to} \mu_k(\phi)$, $n\to\infty$ for every $\phi\in C_c^+(E_k)$. 
\end{proof}
\vspace{5pt}

\begin{proof}[Proof of Proposition \ref{p:consistency.tail.measure} $(ii)$]
For the proof of \eqref{e:consistent.est.by.order.stat}, one can see that for every $\vep>0$, 
\begin{align} \label{e:conv.prob.order}
& \Pr \Big(\,  \Big|  \frac{\consisest}{R_k(C_kn/m)}-1 \Big| \ge \vep \Big) \\
& = \Pr \big( \consisest \ge (1+\vep)R_k(C_kn/m) \big) + \Pr \big( \consisest  \le (1-\vep)R_k(C_kn/m) \big) \notag \\
&= \Pr \big( \nu_{k,n}(B_{1+\vep}) \ge m^k \big) +\Pr \big( \nu_{k,n}(B_{1-\vep}) \le m^k \big),   \notag
\end{align}
where $B_{1\pm \vep}$ is defined at \eqref{e:def.Br}. 
By Proposition \ref{p:consistency.tail.measure} $(i)$, we have 
$$
m^{-k} \nu_{k,n}(B_{1\pm \vep}) \stackrel{\Pr}{\to} \mu_k (B_{1\pm \vep}) = (1\pm \vep)^{d-\alpha k}, \ \ \ n\to\infty, 
$$
and, hence, \eqref{e:conv.prob.order} vanishes as $n\to\infty$. 

For the proof of the second assertion, it follows from  Proposition \ref{p:consistency.tail.measure} $(i)$ and \eqref{e:consistent.est.by.order.stat} that 
$$
\Big( m^{-k}\nu_{k,n}, \frac{\consisest}{R_k(C_kn/m)} \Big) \stackrel{\Pr}{\to} (\mu_k, 1), \ \ \text{in } M_+(E_k) \times (0,\infty). 
$$
Since the map $G:M_+(E_k)\times (0,\infty) \to M_+(E_k)$, defined by $G(\mu, x)(A) = \mu(xA)$ for measurable $A\subset E_k$, is continuous, the proof can be completed by the continuous mapping theorem. 
\end{proof}
\vspace{5pt}

\begin{proof}[Proof of Theorem \ref{t:consistency.Hill}]
Observe first that $\int_1^\infty \mu_k (B_t)t^{-1} \d t = (\alpha k-d)^{-1}$. 
By Proposition \ref{p:consistency.tail.measure} $(ii)$, it follows that  $m^{-k}\hat \nu_{k,n}(B_t) \stackrel{\Pr}{\to} \mu_k (B_t)$ for every $t>0$. Thus, the continuous mapping theorem yields that 
$$
\int_1^M m^{-k} \hat \nu_{k,n}(B_t) \frac{\d t}{t} \stackrel{\Pr}{\to} \int_1^M \mu_k (B_t)\frac{\d t}{t}, 
$$
for every $M>0$. According to \cite[Theorem 3.5]{Resnick:2007}, it suffices to demonstrate that for every $\delta>0$, 
$$
\lim_{M\to\infty} \limsup_{n\to\infty} \Pr \Big(  \int_M^\infty m^{-k}\hat \nu_{k,n}(B_t) \frac{\d t}{t} >\delta \Big) =0. 
$$
By Proposition \ref{p:consistency.tail.measure} $(ii)$, the probability above can be estimated as follows: 
\begin{align}
&\Pr \Big(  \int_M^\infty m^{-k}\hat \nu_{k,n}(B_t) \frac{\d t}{t} >\delta, \ \Big| \frac{\consisest}{R_k(C_kn/m)} - 1 \Big| <\frac{1}{2} \Big) + \Pr \Big( \Big| \frac{\consisest}{R_k(C_kn/m)} - 1 \Big| \ge\frac{1}{2} \Big) \label{e:tail.prob.negligible}\\
&\le \Pr \Big(  \int_{M/2}^\infty m^{-k}\nu_{k,n}(B_t) \frac{\d t}{t} >\delta \Big) + o(1) \notag \\
&\le \frac{1}{\delta}\, \int_{M/2}^\infty m^{-k} \E \big[ \nu_{k,n}(B_t) \big] \frac{\d t }{t} + o(1), \ \ \text{as } n\to\infty.   \notag
\end{align}

Following calculations similar to those for deriving \eqref{e:final.form.expectation}, along with Potter's bound in the Appendix (see also \eqref{e:Potter1} and \eqref{e:Potter2}), we can obtain that 
\begin{align*}
m^{-k}\E\big[ \nu_{k,n}(B_t) \big] &\le C^* \Big( \frac{n}{m} \Big)^k R_k(C_kn/m)^d f\big(R_k(C_k n/m)  \big)^k \int_t^\infty \rho^{-\alpha  +\vep + d -1 } \d \rho \\
& \le C^* t^{-\alpha +\vep+d}, 
\end{align*}
where $0<\vep <\alpha -d$ and the last inequality follows from \eqref{e:def.radius.function.R}. Now, \eqref{e:tail.prob.negligible} is upper bounded by 
$$
\frac{C^*}{\delta}\, \int_{M/2}^\infty t^{-\alpha +\vep+d-1} \d t \le \frac{C^*}{\delta} \Big(\frac{M}{2}\Big)^{-\alpha +\vep+d}, 
$$
and here, the last term clearly goes to $0$ as $M\to\infty$. 
\end{proof}
\medskip

\subsection{Proof of Asymptotic Normality}

We now present the proof of the results in Section \ref{sec:asym.normality}. First, we prove Proposition \ref{p:CLT.measure}, and the proof strategy can be summarized as follows.
For $K\ge1$, let 
 $\phi_k \in C_c(E_k)$ for $1\le k\le K$. We begin by truncating each random variable $\nu_{k,n}(\phi_k)$ in  a way that the truncated versions for different values of $k$ become independent. Using this independence, the multivariate asymptotic normality of the truncated random variables can be directly established from the univariate asymptotic normality of each component. For the required univariate asymptotic normality, Stein’s method for normal approximation (see Theorem 2.4 in \cite{Penrose:2003}) plays a critical role. Specifically, we follow an argument analogous to Proposition 7.3 in \cite{Owada:2017}. Finally, we establish the asymptotic normality of the original random variables by verifying that the truncation effect is negligible.

For every $k$-point vector $\bm{y} = (y_1, \dots, y_k) \in (\R^d)^k$,  define $\ms{max}(\bm{y}) := \max_{1\le i \le k}  |y_i|$, and for  every $T > 0$, set
\begin{equation}  \label{e:def.truncated.measure}
\nu_{k,n}^{(T)}(\cdot) := \frac{1}{k!} \sum_{\bm{y} \in (\Pn)_{\neq}^k} h_k(\bm{y}) \, \1\{ \ms{max}(\bm{y}) \le T \Rk \} \, \delta_{\bm{y}/ \Rk} (\cdot).
\end{equation}
In words, the process \eqref{e:def.truncated.measure} counts only  the $k$-point sets $\bm{y} \in (\Pn)_{\neq}^k$ that satisfy $h_k(\bm{y}) = 1$ and are contained in the ball of radius $T \Rk$ centered at the origin. 

The following lemma proves the multivariate asymptotic normality of the truncated process  \eqref{e:def.truncated.measure}. Before stating the lemma, we  introduce some notations to describe its limiting variance. For every  measurable functions $\phi_1, \phi_2$ on $E_k$,  define
\begin{align*}
V_k^{(T)}(\phi_1, \phi_2) := \begin{cases}
\frac{(\alpha k-d) D_{k,k}}{k!(C_k)^k} \int_{|x| \leq T} \phi_1(x, \dots, x)\phi_2(x, \dots, x) |x|^{-\alpha k} \d x \\[5pt]
&\hspace{-145pt} \text{ if } n f \lp \Rk \rp \to 0, \\[5pt]
\sum_{\ell = 1}^k \frac{\xi^{k-\ell} (\alpha k-d)D_{k, \ell}}{\ell! ( (k-\ell)! )^2(C_k)^k} \int_{|x| \leq T}\phi_1(x, \dots, x)\phi_2(x, \dots, x) |x|^{-\alpha (2k - \ell)} \d x \\[5pt]
&\hspace{-145pt} \text{ if } n f \lp \Rk \rp \to \xi \in (0, \infty), \\[5pt]
\frac{(\alpha k-d) D_{k,1}}{((k-1)! )^2(C_k)^k} \int_{|x| \leq T}\phi_1(x, \dots, x)\phi_2(x, \dots, x) |x|^{-\alpha (2k-1)} \d x \\[5pt]
&\hspace{-145pt} \text{ if } n f \lp \Rk \rp \to \infty, 
\end{cases}
\end{align*}
where  $D_{k, \ell}$ are the constants defined  in \eqref{e:def.Dkl}. 
It clearly holds that  $V_k^{(T)}(\phi_1, \phi_2) \to V_k(\phi_1, \phi_2)$ as $T\to\infty$. As before, we denote $V_k^{(T)}(\phi) = V_k^{(T)}(\phi, \phi)$. 

\begin{lemma}
\label{l:CLT.measure.trancated}
For every $K \ge 1$, let $\phi_k \in C_c(E_k)$ for $1\le k\le K$.   It then holds that 
$$
\Big(  \tau_{k,n}^{-1/2} \big( \nu_{k,n}^{(T)}(\phi_k) - \E [ \nu_{k,n}^{(T)}(\phi_k) ] \big) \Big)_{k = 1}^K \Rightarrow \big( \Phi_k^{(T)}(\phi_k) \big)_{k = 1}^K, \ \  \text{ as } n \to \infty, 
$$
where $(\tau_{k,n})_{n \ge 1}$ is given in \eqref{e:def.tau_kn} and 
$\big( \Phi_k^{(T)}\big)_{k=1}^K$ are independent and centered Gaussian random measures, such that the covariance kernel of $\Phi_k^{(T)}$ is given by   $V_k^{(T)}$. 
\end{lemma}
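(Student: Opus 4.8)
The plan is to reduce the multivariate statement to a collection of univariate central limit theorems plus an independence argument, following the scheme indicated in the text. First I would fix arbitrary test functions $\phi_k \in C_c^+(E_k)$ for $k = 1, \dots, K$ and, by the Cramér--Wold device together with the characterization of vague convergence in $M(E_k)$, reduce the claim to showing joint asymptotic normality of the vector $\big( \tau_{k,n}^{-1/2} (\nu_{k,n}^{(T)}(\phi_k) - \E[\nu_{k,n}^{(T)}(\phi_k)]) \big)_{k=1}^K$. The key structural observation is that the truncation at radius $T R_k(C_k n/m)$ makes the different layers asymptotically disjoint in space: because of the ordering \eqref{e:layered.intro}, i.e.\ $R_K(n/m) \ll \cdots \ll R_1(n/m)$, for all large $n$ the support of $\nu_{k,n}^{(T)}$ is contained in the annulus $\{ T R_{k+1}(C_{k+1}n/m) \le |x| \le T R_k(C_k n/m)\}$ (with obvious modifications at the ends), and these annuli are pairwise disjoint. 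Since $\Pn$ is a Poisson process, its restrictions to disjoint regions are independent, so the truncated statistics $\nu_{k,n}^{(T)}(\phi_k)$, $k=1,\dots,K$, are mutually independent for all large $n$. Hence the joint CLT follows once each marginal is established, and the limiting covariance structure is block-diagonal, i.e.\ the $\Phi_k^{(T)}$ are independent.

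For the univariate CLT of $\tau_{k,n}^{-1/2}(\nu_{k,n}^{(T)}(\phi) - \E[\nu_{k,n}^{(T)}(\phi)])$ I would invoke Stein's method for normal approximation in the Poisson setting, specifically Theorem~2.4 of \cite{Penrose:2003}, exactly as in Proposition~7.3 of \cite{Owada:2017}. Writing $\nu_{k,n}^{(T)}(\phi) = \frac{1}{k!}\sum_{\Y \subset \Pn, |\Y|=k} g_{k,n}^{(T)}(\Y)$ for the appropriate permutation-invariant, local score function $g_{k,n}^{(T)}$ (incorporating $h_k$, the truncation indicator, and $\phi(\cdot/R_k)$), the bounded-diameter condition \eqref{e:denseness.hk} on $h_k$ gives a fixed interaction range $L$, so the stabilization/dependency-graph hypotheses of Stein's bound are met. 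One then checks the required moment bounds and the non-degeneracy of the variance; the latter is precisely $\tau_{k,n}^{-1}\V(\nu_{k,n}^{(T)}(\phi)) \to V_k^{(T)}(\phi)$, which follows from Lemma~\ref{l:exp.var.asym}$(ii)$ restricted to the truncated version (equivalently, replacing $\phi$ by $\phi \cdot \1\{|x|\le T\}$ on the diagonal in \eqref{e:def.Vk}, which is exactly how $V_k^{(T)}$ is defined) — the phase transition in $nf(R_k(C_kn/m))$ enters here and nowhere else. Stein's bound then yields a quantitative rate of convergence to $N(0, V_k^{(T)}(\phi))$, which suffices. Finally, Lemma~\ref{l:exp.var.asym}$(i)$ handles the centering, and the multivariate limit $(\Phi_k^{(T)})_{k=1}^K$ with covariance kernels $V_k^{(T)}$ is identified via its finite-dimensional distributions.

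The main obstacle I anticipate is the careful bookkeeping in the application of Stein's method when $k \ge 2$: unlike the $k=1$ case (the traditional Hill estimator, where the summands are functions of single points), the score function $g_{k,n}^{(T)}$ depends on $k$-tuples satisfying the geometric constraint $h_k$, and the scaling $\tau_{k,n}$ itself depends on the regime of $nf(R_k(C_kn/m))$ — in the regime $nf(R_k(C_kn/m)) \to \infty$ the "effective" contributions come from overlapping $k$-tuples sharing a single point, which is why $\tau_{k,n} = m^k\{nf(R_k(C_kn/m))\}^{k-1}$ there. Verifying that the moment and variance conditions in Theorem~2.4 of \cite{Penrose:2003} hold uniformly, with the correct normalization in each of the three regimes, is the technically delicate part; the multivariate Mecke formula \cite[Theorem 4.4]{Last;Penrose:2017} (as already used in the proof of Lemma~\ref{l:exp.var.asym}) together with Potter's bounds will be the main computational tools. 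The independence-across-$k$ argument, by contrast, is essentially immediate once the disjoint-support observation is made precise, and the passage $T \to \infty$ (needed for Proposition~\ref{p:CLT.measure}) is deferred to the next stage of the argument.
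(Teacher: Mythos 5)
Your proposal matches the paper's proof in all essentials: fix $\phi_k \in C_c^+(E_k)$, reduce to convergence of $(\nu_{k,n}^{(T)}(\phi_k))_{k=1}^K$, observe that for large $n$ the contributing points for different $k$ occupy disjoint spatial annuli (so that, by Poisson independence over disjoint regions, the truncated statistics are independent), then establish each univariate CLT via Stein's method with a dependency graph whose interaction range is fixed by the diameter bound $L$ on $h_k$, and identify the variance via the truncated analogue of Lemma~\ref{l:exp.var.asym}$(ii)$. This is precisely the structure of the paper's proof.

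One small inaccuracy worth correcting: the annulus you wrote, $\{T R_{k+1}(C_{k+1}n/m) \le |x| \le T R_k(C_k n/m)\}$, is not actually where the support lies. The truncation in \eqref{e:def.truncated.measure} only provides the outer radius $T R_k(C_k n/m)$; the measure $\nu_{k,n}^{(T)}$ itself has atoms arbitrarily close to $\{0\}$. The inner radius comes from the compact support of $\phi_k$ in $E_k$: there is $a_k>0$ with $\supp\phi_k \subset B_{a_k}$, so only points with norm in $[a_k R_k(C_k n/m), T R_k(C_k n/m)]$ contribute to $\nu_{k,n}^{(T)}(\phi_k)$. Disjointness for distinct $k$ then follows because $R_{k+1}(t)/R_k(t) \to 0$, so $a_k R_k(C_kn/m) > T R_{k+1}(C_{k+1}n/m)$ for all large $n$. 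This is how the paper argues; your conclusion is the same, but the inner bound should be attributed to $\phi_k$'s support, not to $R_{k+1}$.
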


\begin{proof}[Proof of Lemma \ref{l:CLT.measure.trancated}]

The proof of the lemma is highly related to those of \cite[Proposition 7.3]{Owada:2017} and \cite[Theorem 3.9]{Penrose:2003}.
For each $k =1, \dots, K$, by an argument  identical to the proof of Lemma \ref{l:exp.var.asym} $(ii)$,  we can verify that 
$$
\tau_{k,n}^{-1} \V \big(   \nu_{k,n}^{(T)}(\phi_k)\big) \to V_k^{(T)} (\phi_k), \ \ \ n\to\infty. 
$$
Since each $\phi_k$ has compact support in $E_k$, there exists a constant $a_k>0$, such that the support of $\phi_k$ is contained in $B_{a_k}$, where $B_{a_k}$ is defined in \eqref{e:def.Br} with $r=a_k$. Assuming, without loss of generality, that $T>\max_{k=1,\dots,K}a_k$, we have for each $k=1,\dots,K$, 
\begin{align*}
\nu_{k,n}^{(T)}(\phi_k) &= \frac{1}{k!}  \sum_{\bm{y} \in (\Pn)_{\neq}^k} h_k(\bm{y})\, \phi_k \big( \Rk^{-1} \bm{y} \big)\, \\
& \qquad \qquad \times \1\big\{ \ms{max}(\bm{y}) \in [a_k \Rk, T\Rk] \big\} \\
&= \sum_{\Y\subset \Pn, \, |\Y|=k} h_k(\Y)\widetilde \phi_k \big(\Rk^{-1} \Y  \big)\\
& \qquad \qquad \times \1\big\{ \ms{max}(\Y) \in [a_k \Rk, T\Rk] \big\}, 
\end{align*}
where $\widetilde \phi_k$ is the symmetrization of $\phi_k$ (see \eqref{e:symmetrization}). 
Since   $R_j(t)/R_i(t) \to 0$ as $t\to\infty$, for each $1\le i < j \le K$, the random variables $\nu_{1,n}^{(T)}(\phi_1), \dots, \nu_{K,n}^{(T)}(\phi_K)$ are independent for sufficiently large  $n$.
Thus, the desired multivariate asymptotic normality follows if one can  prove that for each $k=1,\dots,K$, 
$$
\frac{\nu_{k,n}^{(T)}(\phi_k) - \E \big[\nu_{k,n}^{(T)}(\phi_k) \big]}{\sqrt{\V \big(  \nu_{k,n}^{(T)}(\phi_k)\big) }} \Rightarrow N(0, 1) \; \text{ as } n \to \infty. 
$$

Now, we exploit Stein's method based on dependency graph. Let $(Q_{\ell})_{\ell \in \N}$ be an enumeration of $d$-dimensional unit cubes covering $\R^d$ with $Q_{\ell}^o \cap Q_{\ell'}^o = \emptyset$ for any $\ell \neq \ell'$, where $Q_{\ell}^o$ denotes the interior of the cube $Q_{\ell}$. For each positive integer $n$, define
$$
W_n := \left\{ \ell \in \N  :  Q_{\ell} \cap \mathrm{Ann} \lp a_k \Rk, \, T \Rk \rp \neq \emptyset \right\},
$$
where $\mathrm{Ann}(a, b) := \{x \in \R^d  :  a \leq |x| \leq b \}$, $0 \le a <b$, 
is the closed annulus in $\R^d$. Notice that the cardinality of $W_n$ is upper bounded by $\Rk^d$ up to the scale. Then, $\nu_{k,n}^{(T)} (\phi_k)$ can be partitioned as follows:
\begin{align*}
\nu_{k,n}^{(T)}(\phi_k)  &= \sum_{\ell \in W_n} \sum_{\Y\subset \Pn, \, |\Y|=k} h_k(\Y)\, \widetilde \phi_k \big( \Rk^{-1} \Y \big)\\
&\qquad \qquad  \qquad \times  \1 \left\{ S(\Y) \in Q_\ell, \, \ms{max}(\Y) \in [a_k \Rk, T\Rk] \right\}  \\
&=: \sum_{\ell \in W_n} \eta_{\ell, k, n},
\end{align*}
where, for each $k$-point set $\Y = \{y_1, \dots, y_k\} \subset \R^d$, $S(\Y)$ denotes the element in $\Y$ satisfying $|S(\Y)| = \ms{max}(\Y)$. 
Now, let us define a graph $(W_n, \sim)$, in a way that  $W_n$ is the vertex set,  and   $i, j \in W_n$  are connected by an edge if and only if the distance between the cubes $Q_i$ and $Q_j$ are less than $2L$, where $L$ is a constant determined in \eqref{e:denseness.hk}. Then, the graph $(W_n, \sim)$ becomes a dependency graph with respect to the collection of random variables $(\eta_{\ell, k, n} )_{\ell \in W_n}$. 

Let $\Psi$ denote the distribution function of the standard Gaussian distribution on $\R$. Then, according to Stein's method for normal approximation regarding dependency graph (see, e.g., \cite[Theorem 2.4]{Penrose:2003}), it holds that, for every $z \in \R$, 
\begin{align}
&\left| \, \Pr \left( \frac{\nu_{k,n}^{(T)}(\phi_k) - \E \big[\nu_{k,n}^{(T)}(\phi_k) \big]}{\sqrt{\V \big(  \nu_{k,n}^{(T)}(\phi_k)\big) }}  \leq z \right) - \Psi(z) \,  \right|  \label{e:stein.normal.approx} \\[5pt]
&\le C^*\bigg\{  \sqrt{ R_k\Big( \frac{C_kn}{m} \Big)^d \max_{\ell \in W_n} \frac{\E\big[ \big| \eta_{\ell, k, n} - \E [\eta_{\ell, k, n} ] \big|^3\big]}{\big\{\V (  \nu_{k,n}^{(T)}(\phi_k))\big\}^{3/2}}} \notag \\
&  \quad + \quad \sqrt{ R_k\Big( \frac{C_kn}{m} \Big)^d \max_{\ell \in W_n}  \frac{\E\big[ \big( \eta_{\ell, k, n} - \E [\eta_{\ell, k, n} ] \big)^4\big]}{\big\{\V (  \nu_{k,n}^{(T)}(\phi_k))\big\}^2} } \bigg\}. \notag
\end{align}
Hence, the proposed result follows if we can show that  the right-hand side in \eqref{e:stein.normal.approx} tends to zero as $n \to \infty$ in all three possible scenarios depending on the limit value of $n f \lp \Rk \rp$. The remainder of the proof is very similar to that of \cite[Proposition 7.3]{Owada:2017}; hence, we omit it here.
\end{proof}

\begin{remark}
For a particular choice of the cut-off sequence $m=m(n)$, Lemma \ref{l:CLT.measure.trancated} can be strengthened using more recent techniques on normal approximation for $U$-statistics of Poisson processes (\cite{Reitzner;Schulte:2013, Schulte:2016}). For example, applying \cite[Theorem 6.2]{Reitzner;Schulte:2013}, our calculation shows that the Wasserstein distance between $\big\{\text{Var}(\nu_{k,n}^{(T)}(\phi_k)\big\}^{-1/2} \big( \nu_{k,n}^{(T)}(\phi_k) - \E[ \nu_{k,n}^{(T)}(\phi_k)] \big)$
 and the standard normal distribution is bounded, up to constants, by  
\begin{equation}  \label{e:Malliavin.Stein.bdd}
\frac{n^{-1/2} (n/m)^{k/2}}{f \lp R_k(C_k n/m) \rp^{k-1}}.
\end{equation}
When $m=m(n)$ grows quickly enough, \eqref{e:Malliavin.Stein.bdd} tends to $0$, yielding a stronger conclusion than Lemma \ref{l:CLT.measure.trancated}, namely, convergence in Wasserstein distance. However, if $m=m(n)$ does not grow fast enough, \eqref{e:Malliavin.Stein.bdd} may fail to converge to $0$. For this reason, and to avoid putting additional conditions on the rate of $m(n)$, we have chosen to rely on Lemma \ref{l:CLT.measure.trancated} rather than the approach in \cite{Reitzner;Schulte:2013, Schulte:2016}.
%For some choices of $m = m(n)$, Lemma~\ref{l:CLT.measure.trancated} can be proved by applying more recent techniques for normal approximation for U-statistics of Poisson processes, such as those introduced in \cite{Reitzner;Schulte:2013, Schulte:2016}. For instance, \cite[Theorem 6.2]{Reitzner;Schulte:2013} can provide that the Wasserstein distance between the normalized $\nu_{k,n}^{(T)}(\phi_k)$ and the standard normal distribution is bounded by 
%\[
%n^{-1/2} \frac{(n/m)^{k/2}}{f \lp R_k(C_k n/m) \rp^{k-1}}
%\]
%up to constants, which tends to zero when $m$ is chosen to be close enough to $n$. However, such a choice also depends on the value of $\alpha$, thus the argument relies on this type of upper bound cannot guarantee the normal approximation for the full general case.
\end{remark}

\begin{proof}[Proof of Proposition \ref{p:CLT.measure}]
The rest of our argument is dedicated to showing  that  the truncation in \eqref{e:def.truncated.measure} is asymptotically negligible. Note first that we have 
$$
\Phi_k^{(T)} (\phi_k) \Rightarrow \Phi_k (\phi_k), \ \ \text{as  } T\to\infty, 
$$
because $V_k^{(T)}(\phi_k) \to V_k(\phi_k)$ as $T\to\infty$. For every $k=1,\dots,K$, we define 
$$
\bar \nu_{k,n}(\phi_k) := \nu_{k,n}(\phi_k) - \E \big[ \nu_{k,n}(\phi_k) \big]. 
$$
Similarly, one can define the truncated version $\bar \nu_{k,n}^{(T)}(\phi_k)$ in an analogous manner.

According to \cite[Theorem 3.5]{Resnick:2007}, it now suffices to prove that  for every   $k=1,\dots,  K$, and $\vep > 0$, 
\begin{equation}  \label{e:second.converging}
\lim_{T \to \infty} \limsup_{n \to \infty} \Pr \left(  \tau_{k,n}^{-1/2} \big| \bar \nu_{k,n}(\phi_k) - \bar \nu_{k,n}^{(T)}(\phi_k)  \big|
>\vep \right) = 0.
\end{equation}
By Chebyshev's inequality, we have 
\begin{align}
&\Pr \Big(  \tau_{k,n}^{-1/2} \big| \bar \nu_{k,n}(\phi_k) - \bar \nu_{k,n}^{(T)}(\phi_k)  \big| >\vep \Big)  \label{e:variance.with.indicator} \\
&\le \frac{1}{\vep^2 \tau_{k,n}}\, \V \Big(  \sum_{\Y\subset \Pn, \, |\Y|=k} h_k(\Y) \, \widetilde \phi_k \big( \Rk^{-1}\Y \big)\, \1\{ \ms{max}(\Y) > T \Rk \}  \Big). \notag 
\end{align}
The variance  above has essentially the same structure as in \eqref{e:nukn.no.permutation.expression}, with the only difference being the additional indicator in \eqref{e:variance.with.indicator}. Hence, by repeating the calculations from the proof of Lemma \ref{l:exp.var.asym} $(ii)$, the last term in \eqref{e:variance.with.indicator} converges to $\vep^{-2}\big( V_k(\phi_k) - V_k^{(T)}(\phi_k)\big)$, which however vanishes as $T\to\infty$. Now, \eqref{e:second.converging} has been verified. 
\end{proof}

Next we prove the functional-level asymptotic normality for the process $\big(\nu_{k,n}(t), \, t\ge0\big)$ in \eqref{e:nu.kn.proc.version}. 

\begin{proof}[Proof of Proposition \ref{p:CLT.functional}]
Our proof is divided into two parts. First, we prove  the finite-dimensional weak convergence. 
From Proposition \ref{p:CLT.measure}, together with the Cram\'er--Wold theorem, it follows that 
$$
\lp \tau_{k,n}^{-1/2} \lp \nu_{k,n}(t) - \E [ \nu_{k,n}(t) ] \rp \rp_{k = 1}^K \stackrel{f.d.d}{\Rightarrow} \lp \Phi_k(B_t) \rp_{k = 1}^K, \ \  \text{ as } n \to \infty, 
$$
where   $\overset{f.d.d}{\Rightarrow}$ means  finite-dimensional weak convergence. Thus it suffices to identify the covariance of the  process $\lp \Phi_k(B_t),\,  t \in (0, \infty] \rp$ for each $k$. By \eqref{e:def.Vk}, one can see that for $s,t \in (0,\infty]$, 
\begin{align*}
&\Cov \big( \Phi_k(B_t), \Phi_k (B_s) \big) = \begin{cases}
L_{k,k} (t\vee s)^{d-\alpha k} & \hspace{-40pt} \text{if } nf\big( \Rk \big)\to 0, \\[3pt]
\sum_{\ell=1}^k \xi^{k-\ell} L_{k,\ell} (t\vee s)^{d-\alpha (2k-\ell)} \\
& \hspace{-40pt} \text{if } nf\big( \Rk \big) \to \xi \in (0, \infty), \\[3pt]
L_{k,1} (t\vee s)^{d-\alpha (2k-1)} & \hspace{-40pt} \text{if } nf\big( \Rk \big) \to \infty. 
\end{cases}
\end{align*}
It is straightforward to verify that the above expression coincides with $\Cov\big( W_k(t), W_k(s) \big)$ regardless of the limit value of $nf\big( \Rk \big)$. This allows us to conclude the proof of the finite-dimensional weak convergence.

The remainder of the discussion  is focused on showing  the tightness.  According to criteria for the tightness in the space $\mathcal{D} (0, \infty]$, which are  given, for example, in  \cite[Theorem 13.5]{Billingsley:1999}, it suffices to prove the following: with a fixed $\delta > 0$, there exist  constants $A > 0$, $N \in \N$, and $q > 1$, such that for all $\delta \leq r \leq s \leq t \leq \infty$ and $n \geq N$, 
\begin{equation}  \label{e:sufficient.cond.tightness}
\tau_{k,n}^{-2} \E \lb \lp \gamma_{k, n,s,t} - \E[\gamma_{k, n,s,t}] \rp^2 \lp \gamma_{k, n, r,s} - \E[\gamma_{k, n,r,s}] \rp^2 \rb \leq A(t^d - r^d)^{1 + 1/q},
\end{equation}
where $\gamma_{k, n,s,t} := \nu_{k,n}(s) -\nu_{k,n}(t)$ for $\delta \le s \le t\le \infty$. 

The criterion \eqref{e:sufficient.cond.tightness} is slightly modified from the version in \cite[Theorem 13.5]{Billingsley:1999}. Specifically, we consider the time-reversed space $\mathcal D(0,\infty]$, whereas the original criterion in \cite{Billingsley:1999} is formulated for $\mathcal D[0,\infty)$. According to the weak convergence theory in $\mathcal D[0,\infty)$ (see \cite[Chapter 16]{Billingsley:1999}), tightness in $\mathcal D[0,\infty)$ follows from the criterion in \cite[Theorem 13.5]{Billingsley:1999} applied to  $\mathcal D[0,L]$ for each $L>0$. Analogously, in our time-reversed setting it suffices to show tightness in $\mathcal D[\delta,\infty]$ for each $\delta>0$. Moreover, the exponent on the right-hand side of \eqref{e:sufficient.cond.tightness} is defined slightly differently from that in \cite[Theorem 13.5]{Billingsley:1999}, for our convenience. 

%Note that the criteria is slightly modified from that given in \cite[Theorem 13.5]{Billingsley:1999} because we are considering the time-reversed space $\mathcal{D}(0, \infty]$ instead of the space $\mathcal{D}[0, \infty)$. In addition, the presentation of the exponent on the right-hand side of the expression (\ref{e:sufficient.cond.tightness}) is also changed for convenience, which is indeed equivalent to the one given in the referred criteria.

To begin, we  introduce additional notations. For any $\delta \leq s \leq t \leq \infty$, define 
$$
h_{k, n, s, t}(\mathcal{Y}) := h_k(\mathcal{Y}) \1\{s \leq \Rk^{-1} \ms{min}(\mathcal{Y}) < t \}, \ \ \ \Y=\{ y_1,\dots,y_k \} \subset  \R^d,  \ \ y_i\in \R^d, 
$$
so that 
$$
\gamma_{k, n, s, t} = \sum_{\mathcal{Y} \subset \mathcal{P}_n, |\mathcal{Y}| = k} h_{k, n, s, t}(\mathcal{Y}).
$$
Now, the left-hand side in \eqref{e:sufficient.cond.tightness} can be denoted as 
\begin{equation}  \label{e:tightness.combinatrics}
\tau_{k,n}^{-2} \sum_{p=0}^2 \sum_{q = 0}^2 \binom{2}{p} \binom{2}{q} (-1)^{p + q} F_n(p, q),
\end{equation}
where 
$$
F_n(p, q) := \E \big[ \gamma_{k, n,s,t}^p \gamma_{k, n,r,s}^q \big] \lp \E [ \gamma_{k, n,s,t}] \rp^{2- p} \lp \E [ \gamma_{k, n, r, s} ] \rp^{2 - q}.
$$
Note that, for any $p, q \in \{0, 1, 2\}$, we may write
$$
F_n(p, q) = \E \lb \sum_{\mathcal{Y}_1 \subset \mathcal{P}_n^{(1)}} \hspace{-2.5pt} \sum_{\mathcal{Y}_2 \subset \mathcal{P}_n^{(2)}} \hspace{-2.5pt} \sum_{\mathcal{Y}_3 \subset \mathcal{P}_n^{(3)}} \hspace{-2.5pt} \sum_{\mathcal{Y}_4 \subset \mathcal{P}_n^{(4)}} \hspace{-4pt} h_{k, n,s,t}(\mathcal{Y}_1) h_{k, n,s,t}(\mathcal{Y}_2) h_{k, n,r,s}(\mathcal{Y}_3) h_{k, n,r,s}(\mathcal{Y}_4) \rb,
$$

where for every pair $i \neq j$, either $\mathcal{P}_n^{(i)} = \mathcal{P}_n^{(j)}$ or $\mathcal{P}_n^{(i)}$ and $\mathcal{P}_n^{(j)}$ are independent copies of each other.

By following the same reasoning as in the proof of \cite[Theorem 4.3]{Owada:2017}, a careful examination of the expression  \eqref{e:tightness.combinatrics} reveals that many of the terms in \eqref{e:tightness.combinatrics} will cancel each other. Consequently, it remains to demonstrate that 
\begin{align}
\tau_{k,n}^{-2} & \E \bigg[ \sum_{\mathcal{Y}_1 \subset \mathcal{P}_n} \sum_{\mathcal{Y}_2 \subset \mathcal{P}_n} \sum_{\mathcal{Y}_3 \subset \mathcal{P}_n} \sum_{\mathcal{Y}_4 \subset \mathcal{P}_n} h_{k,n,s,t}(\mathcal{Y}_1) h_{k,n,s,t}(\mathcal{Y}_2) h_{k,n,r,s}(\mathcal{Y}_3) h_{k,n,r,s}(\mathcal{Y}_4) \label{e:cross.term.considered} \\
& \hspace{-6pt} \times \prod_{i=1}^4 \1\{ \mathcal{Y}_i \text{ shares at least one common element with at least one of the other } \Y_j, \, j\neq i \} \bigg] \notag 
\end{align}
is bounded above by $(t^d - r^d)^{1 + 1/q}$, up to constants. For this purpose, 
We consider the following four cases:

\begin{itemize}

\item[($i$)] $\mathcal{Y}_1 \cap \mathcal{Y}_2 \neq \emptyset$ and $\mathcal{Y}_3 \cap \mathcal{Y}_4 \neq \emptyset$ with $(\mathcal{Y}_1 \cup \mathcal{Y}_2) \cap (\mathcal{Y}_3 \cup \mathcal{Y}_4) = \emptyset$, 

\item[($ii$)] $\mathcal{Y}_1 \cap \mathcal{Y}_3 \neq \emptyset$ and $\mathcal{Y}_2 \cap \mathcal{Y}_4 \neq \emptyset$ with $(\mathcal{Y}_1 \cup \mathcal{Y}_3) \cap (\mathcal{Y}_2 \cup \mathcal{Y}_4) = \emptyset$, 

\item[($iii$)] $\mathcal{Y}_1 \cap \mathcal{Y}_4 \neq \emptyset$ and $\mathcal{Y}_2 \cap \mathcal{Y}_3 \neq \emptyset$ with $(\mathcal{Y}_1 \cup \mathcal{Y}_4) \cap (\mathcal{Y}_2 \cup \mathcal{Y}_3) = \emptyset$, 

\item[($iv$)] Each $\Y_i$ shares at least one common element with at least one of the other $\Y_j$, $j\neq i$, but none of the conditions $(i)$--$(iii)$ holds. 
\end{itemize}

Since Cases $(i)$, ($ii$), and ($iii$) can be handled in the same way,  we analyze  Case ($i$) only. 
Specifically, letting $\ell, \ell' \in \{1, \dots, k\}$, we have to find an upper bound of 
\begin{align*}
& \tau_{k,n}^{-1} \E \lb \sum_{\mathcal{Y}_1 \subset \mathcal{P}_n} \sum_{\mathcal{Y}_2 \subset \mathcal{P}_n} h_{k, n,s, t}(\mathcal{Y}_1) h_{k, n, s, t}(\mathcal{Y}_2) \1 \{ |\mathcal{Y}_1 \cap \mathcal{Y}_2| = \ell \} \rb \\
& \times \tau_{k,n}^{-1} \E \lb \sum_{\mathcal{Y}_3 \subset \mathcal{P}_n} \sum_{\mathcal{Y}_4 \subset \mathcal{P}_n} h_{k, n, r, s}(\mathcal{Y}_3) h_{k, n, r,s}(\mathcal{Y}_4) \1\{ |\mathcal{Y}_3 \cap \mathcal{Y}_4| = \ell' \} \rb =: A_1 \times A_2.
\end{align*}
By the multivariate Mecke formula, as well as the same change of variables as in \eqref{e:first.change.of.variables}--\eqref{e:final.form.expectation}, the term $A_1$ can be written as 
\begin{align}
\begin{split}  \label{e:final.form.A1}
A_1 & =  \frac{\tau_{k,n}^{-1} n^{2k - \ell}}{\ell ! ((k-\ell)!)^2}\, \Rk^d f \lp \Rk  \rp^{2k - \ell} \\
&\:  \times \int_{\rho = 0}^{\infty} \int_{\theta \in \mathbb{S}^{d-1}} \int_{\bz \in (\R^d)^{k-1}}  h_k(0, z_1, \dots, z_{k-1}) h_k(0, z_1, \dots, z_{\ell-1}, z_k, \dots, z_{2k - \ell -1}) \\
&\: \times \1\big\{ \ms{min} (\rho \theta, \rho \theta + \Rk^{-1} z_1,\dots, \rho \theta + \Rk^{-1} z_{2k - \ell - 1}) \in [s, t) \big\} \\
&\:  \times \frac{f \lp \Rk \rho  \rp}{f \lp \Rk \rp} \prod_{i=1}^{2k - \ell - 1} \frac{f \lp \Rk |\rho \theta + \Rk^{-1} z_i| \rp}{f \lp \Rk \rp} \rho^{d-1} \d \bz \d \sigma(\theta) \d \rho.
\end{split}
\end{align}
By the definition of $\tau_{k,n}$ in \eqref{e:def.tau_kn}, the factor $\tau_{k,n}^{-1} n^{2k - \ell} \Rk^d f \lp \Rk \rp^{2k - \ell}$ is bounded by a finite and positive constant. Further, the indicator function in \eqref{e:final.form.A1} can be bounded as follows. 
\begin{align*}
&\1 \{ \ms{min}(\rho \theta, \rho \theta + \Rk^{-1} z_1, \dots, \rho \theta + \Rk^{-1} z_{2k - \ell - 1}) \in [s, t) \} \\
&\quad = \1 \left\{ \bigvee_{i=0}^{2k - \ell - 1}  |\rho \theta + \Rk^{-1} z_i| < t  \right\} \prod_{i=1}^{2k - \ell - 1} \1 \big\{ |\rho \theta + \Rk^{-1} z_i | \geq s \big\} \\
&\quad \le  \sum_{i=0}^{2k - \ell - 1} \1 \big\{ r\le | \rho \theta + \Rk^{-1} z_i | < t \big\}, 
\end{align*}
with $z_0 \equiv 0$. 
By Potter's bounds, for any $0 < \vep < \alpha -d$, we  obtain that
$$
\frac{f \lp \Rk \rho  \rp}{f \lp \Rk  \rp} \1 \{ \rho \geq s \} \leq ( 1+ \vep) \rho^{-\alpha + \vep} \1 \{ \rho \geq \delta \}
$$
for  sufficiently large  $n$. Similarly, for every $i=1,\dots,2k-\ell-1$, and sufficiently large $n$,
$$
\frac{f \lp \Rk | \rho \theta + \Rk^{-1} z_i|  \rp}{f \lp \Rk \rp} \1 \big\{ | \rho \theta + \Rk^{-1} z_i| \geq s \big\} \le (1 + \vep) \delta^{-\alpha + \vep}. 
$$
Substituting all these bounds, \eqref{e:final.form.A1} is now  upper bounded by 
\begin{align*}
& C^* (1 + \vep)^{2k - \ell} \delta^{-(2k - \ell - 1)(\alpha - \vep)} \\
&\qquad  \qquad  \times \sum_{i=0}^{2k - \ell - 1} \int_{\bz \in (\R^d)^{2k - \ell - 1}} h_k(0, z_1, \dots, z_{k-1})\,  h_k(0, z_1, \dots, z_{\ell - 1}, z_k, \dots, z_{2k - \ell -1}) \\
&\qquad  \qquad \times \int_{\rho = 0}^{\infty} \int_{\theta \in \mathbb{S}^{d-1}}  \rho^{d-1 - \alpha + \vep}\, \1 \big\{ r \le | \rho \theta + \Rk^{-1} z_i |< t \big\}   \d \sigma(\theta) \d \rho \d \bz. 
\end{align*}
Performing change of variables  $v=\rho\theta + \Rk^{-1}z_i$, the inner integral over $(\rho, \theta)$ above can be estimated as 
\begin{align*}
&\int_{\rho = 0}^{\infty} \int_{\theta \in \mathbb{S}^{d-1}} \rho^{d-1-\alpha +\vep} \1 \big\{ r \le | \rho \theta + \Rk^{-1} z_i |< t \big\}  \d \sigma(\theta) \d \rho \\
&=\int_{r \le |v| \le t} \Big| v-\frac{z_i}{\Rk} \Big|^{-\alpha +\vep} \d v  \le \kappa_d \Big( \delta - \frac{L}{\Rk} \Big)^{-\alpha+\vep} (t^d-r^d). 
\end{align*} 
Now, it can be concluded  that $A_1$ in \eqref{e:final.form.A1} is bounded above by $ (t^d - r^d)$ up to constants. Similarly,  $A_2$ is also  bounded by $(t^d - r^d)$ up to constants. This completes the discussion for Case $(i)$. 

Next let us consider Case $(iv)$. For brevity, let $\mathcal{E}$ denote the event that  $(\Y_1, \dots, \Y_4)$ satisfies the conditions in  Case $(iv)$.  Note that 
\begin{align*}
\mE &= \mE \cap \big\{ (\Y_1 \cup \Y_2) \cap (\Y_3 \cup \Y_4) \neq \emptyset \big\} \\
&\subset \big( \mE \cap \{ \Y_1\cap \Y_3 \neq \emptyset \}  \big) \cup \big( \mE \cap \{ \Y_1\cap \Y_4 \neq \emptyset \}  \big) \\
& \qquad \qquad \qquad \qquad  \cup \big( \mE \cap \{ \Y_2\cap \Y_3 \neq \emptyset \}  \big) \cup \big( \mE \cap \{ \Y_2\cap \Y_4 \neq \emptyset \}  \big) \\
&=: \bigcup_{i=1}^4 \mE_i, 
\end{align*}
and hence, $\1_\mE\le \sum_{i=1}^4 \1_{\mE_i}$. 

Below, we only find an upper bound of \eqref{e:cross.term.considered} under the event $\mE_1$, because the other three cases for  $\mE_2$, $\mE_3$, and $\mE_4$ can be  treated  in the same way. More specifically, letting $\ell' \in \{1,\dots,k\}$ and $\ell\in \{ 3,\dots,3k \}$, we need to bound 
\begin{align}
\tau_{k,n}^{-2} & \E \bigg[ \sum_{\mathcal{Y}_1 \subset \mathcal{P}_n} \sum_{\mathcal{Y}_2 \subset \mathcal{P}_n} \sum_{\mathcal{Y}_3 \subset \mathcal{P}_n} \sum_{\mathcal{Y}_4 \subset \mathcal{P}_n} h_{k,n,s,t}(\mathcal{Y}_1) h_{k,n,s,t}(\mathcal{Y}_2) h_{k,n,r,s}(\mathcal{Y}_3) h_{k,n,r,s}(\mathcal{Y}_4)  \label{e:under.E1} \\
&\qquad \times \1_{\mE_1} \times \1 \big\{ |\mathcal{Y}_1 \cap \mathcal{Y}_3| = \ell', \  | \mathcal{Y}_1 \cup \mathcal{Y}_2 \cup \mathcal{Y}_3 \cup \mathcal{Y}_4| = 4k - \ell \big\} \bigg].\notag
\end{align}
Observe that if $\ell' = k$, then $\Y_1 = \Y_3$, and the expectation above becomes non-zero only when $\Rk^{-1} \ms{min}(\Y_1) \in [s,t) \cap [r,s)$; however, this condition cannot be satisfied. Similarly, the case $\ell = 3k$ can be excluded for the same reason. Therefore, in the following, we may assume  $\ell' \in \{1, \dots, k-1\}$ and $\ell \in \{3, \dots, 3k-1\}$.

Applying the multivariate Mecke formula, up to the scale, \eqref{e:under.E1}  can be written as
\begin{align*}
\tau_{k,n}^{-2} n^{4k - \ell} & \int_{\bx \in (\R^d)^{4k - \ell}} h_{k, n,s,t}(x_1, \dots, x_k) h_{k, n,r,s}(x_1, \dots, x_{\ell'}, x_{k +1}, \dots, x_{2k - \ell'}) \\
& \times h_{k, n,s,t}(\bx^{(1)}) h_{k,n,r,s}(\bx^{(2)}) \prod_{i=1}^{4k - \ell} f(x_i) \d \bx,
\end{align*}
where $\bx^{(1)}$ and $\bx^{(2)}$ are vectors of length $k$  consisting of some parts of the vector $\bx$ of length $4k - \ell$,  such that $\bx= (x_1,\dots, x_{2k-\ell'}) \cup \bx^{(1)}\cup \bx^{(2)}$. 
By the same change of variables as in the proof of Lemma \ref{l:exp.var.asym}, the above is equal to 
\begin{align}
\begin{split}  \label{e:final.form.tightness}
&\tau_{k,n}^{-2} n^{4k - \ell} \Rk^d f \lp \Rk \rp^{4k - \ell} \\
&\quad \times \int_{\rho = 0}^{\infty} \int_{\theta \in \mathbb{S}^{d-1}} \int_{\bz \in (\R^d)^{4k - \ell - 1}} \hspace{-10pt} h_k(0, z_1, \dots, z_{k-1}) h_k(0, z_1, \dots, z_{\ell'-1}, z_k, \dots, z_{2k - \ell' - 1}) \\
& \quad \times h_k(\bz^{(1)}) h_k(\bz^{(2)}) \\
&\quad \times \1\{ \ms{min}(\rho \theta, \rho \theta + \Rk^{-1} z_1, \dots, \rho \theta + \Rk^{-1} z_{k-1}) \in [s, t) \} \\
&\quad \times \1\{ \ms{min}(\rho \theta, \rho \theta + \Rk^{-1} z_1, \dots, \rho \theta + \Rk^{-1} z_{\ell' - 1}, \\
&\qquad \qquad \qquad  \qquad \qquad \rho \theta + R_k(n/m)^{-1} z_k, \rho \theta + R_k(n/m)^{-1} z_{2k - \ell' - 1}) \in [r, s) \} \\
&\quad \times \1\{ \ms{min}(\rho \theta + \Rk^{-1} \bz^{(1)}) \in [s, t) \} \, \1\{ \ms{min}(\rho \theta + \Rk^{-1} \bz^{(2)}) \in [r, s) \} \\
&\quad \times \frac{f \lp \Rk \rho \rp}{f \lp \Rk  \rp} \prod_{i=1}^{4k - \ell - 1} \frac{f \lp \Rk |\rho \theta + \Rk^{-1} z_i|  \rp}{f \lp \Rk  \rp} \rho^{d-1} \d \bz \d \sigma(\theta) \d \rho.
\end{split}
\end{align}
Now, we shall find the upper bounds for each of the terms in \eqref{e:final.form.tightness}. First, 
it follows from the Potter bounds that under the condition on the indicator functions,  
$$
\begin{aligned}
& \frac{f \lp \Rk \rho  \rp }{f \lp \Rk \rp} \prod_{i=1}^{4k - \ell - 1} \frac{f \lp \Rk | \rho \theta + \Rk^{-1} z_i|  \rp }{f \lp \Rk  \rp}  \\
& \leq (1 + \vep)^{4k-\ell}  \delta^{-(4k-\ell-1)(\alpha -\vep)}\rho^{-\alpha + \vep}, 
\end{aligned}
$$
where $0<\vep < \alpha -d$. 
Moreover, we observe that the product of the first two indicator functions in \eqref{e:final.form.tightness} can be bounded by  
$$
\sum_{i=k}^{2k - \ell' - 1} \sum_{j=0}^{k-1} \1\{r\le  |\rho \theta + \Rk^{-1} z_i| <s, \ s\le  | \rho \theta + \Rk^{-1} z_j| <t \}.
$$
Further, due to the restriction on the event $\mE_1$ and the condition \eqref{e:denseness.hk} for $h_k$, each element in $\bz^{(1)}$ and $\bz^{(2)}$ must be bounded  from the origin.

Now, combining these observations with the derived bounds, while bounding $h_k(\bz^{(1)})$, $h_k(\bz^{(2)})$, and their corresponding indicator functions by $1$ respectively,  it suffices to estimate the following term for each $i\in \{ k,\dots,2k-\ell'-1 \}$ and $j\in \{ 0,\dots,k-1 \}$: 
\begin{align}
C_{ij} & := \tau_{k,n}^{-2} n^{4k - \ell} \Rk^d f \lp \Rk \rp^{4k - \ell} \label{e:C_ij} \\
&\qquad \times \int_{\rho = 0}^{\infty} \int_{\theta \in \mathbb{S}^{d-1}} \int_{\bz \in (\R^d)^{2k - \ell'-1}} h_k(0, z_1, \dots, z_{k-1}) h_k(0, z_1,\dots,z_{\ell'-1}, z_k, \dots, z_{2k - \ell'-1}) \notag \\
&\qquad \times  \1\big\{ r \leq |\rho \theta + \Rk^{-1} z_i| < s, \ s \leq |\rho \theta + \Rk^{-1} z_j| < t \big\} \notag \\
& \qquad \times \rho^{d-1-\alpha+\vep} \d \bz \d \sigma(\theta) \d \rho \notag \\
&= \tau_{k,n}^{-2} n^{4k - \ell} \Rk^d f \lp \Rk \rp^{4k - \ell} \notag \\
&\qquad  \times \int_{v\in \R^d} \int_{\bz\setminus \{ z_i \} \in (\R^d)^{2k-\ell'-2}} h_k(0,z_1,\dots,z_{k-1}) \, \notag \\
& \qquad \qquad \qquad \qquad \qquad \qquad \times |v|^{-\alpha+\vep} \, \1 \big\{  s \le |v+\Rk^{-1}z_j|<t \big\} \notag  \\
&\qquad \times \Big(  \int_{z_i \in \R^d} h_k(0, z_1,\dots,z_{\ell'-1}, z_k, \dots, z_{2k - \ell'-1})\, \notag \\
&\qquad \qquad \qquad \qquad \qquad \qquad \times \1\big\{ r\le |v+\Rk^{-1}z_i| <s  \big\}\d z_i \Big) \d (\bz\setminus \{z_i\}) \d v. \notag
\end{align}

Let $I$ denote the inner integral with respect to $z_i$. For every $v\in \R^d$ and $\bz \setminus \{z_i\} \in (\R^d)^{2k - \ell'-2}$, H\"{o}lder's inequality gives us 
\begin{align*}
I &\leq \Big\{ \int_{\R^d} h_k(0, z_1, \dots, z_{\ell'-1}, z_k, \dots, z_{2k - \ell'-1}) \d z_i \Big\}^{1/p} \\
& \qquad \times \Big\{ \int_{\R^d} \1\{ r \leq |y + \Rk^{-1} z_i| < s\} \d z_i \Big\}^{1/q} \\
&\le \Big\{ \int_{\R^d} h_k(0, z_1, \dots, z_{\ell'-1}, z_k, \dots, z_{2k - \ell'-1}) \d z_i \Big\}^{1/p} \kappa_d^{1/q} \Rk^{d/q} (t^d-r^d)^{1/q}, 
\end{align*}
for every $p, q > 1$ with $1/p + 1/q = 1$. 
Substituting this  bound into \eqref{e:C_ij} and noting that 
$$
\int_{\R^d} |v|^{-\alpha+\vep} \1 \big\{ s\le |v+\Rk^{-1}z_j| <t \big\}\d v \le \Big(  \delta - \frac{L}{\Rk}\Big)^{-\alpha +\vep} \kappa_d (t^d-r^d), 
$$
we can obtain 
\begin{align}
C_{i,j} &\le C^* \tau_{k,n}^{-2} n^{4k - \ell} \Rk^{(1+1/q)d} f \lp \Rk \rp^{4k - \ell} (t^d-r^d)^{1+1/q} \label{e:Cij.upper.bound} \\
&\le C^* \tau_{k,n}^{-2} m^{k(1+1/q)} \big( nf(\Rk) \big)^{(3-1/q)k-\ell} (t^d-r^d)^{1+1/q}. \notag
\end{align}
For the last inequality above, we have used \eqref{e:def.radius.function.R}. 

If $nf(\Rk) \to 0$ or $\xi \in (0, \infty)$, then $\tau_{k,n} = m^k$, and \eqref{e:Cij.upper.bound} is maximized when $\ell = 3k - 1$, yielding an upper bound of
$$
C^* m^{-k(1 - 1/q)} \big( nf(\Rk) \big)^{1 - k/q} (t^d - r^d)^{1 + 1/q}.
$$
If we choose $q > k$, the last term can be upper bounded by $(t^d - r^d)^{1 + 1/q}$, up to a scaling factor. If $nf(\Rk) \to \infty$, then $\tau_{k,n} = \big( nf(\Rk) \big)^{k - 1}$, and \eqref{e:Cij.upper.bound} is maximized at $\ell = 3$. In this case, \eqref{e:Cij.upper.bound} is bounded by
$$
C^* m^{-k(1 - 1/q)} \big( nf(\Rk) \big)^{k(1 - 1/q) - 1} (t^d - r^d)^{1 + 1/q}.
$$
Here, it is sufficient to choose $q \in (1, k/(k-1))$, and the final term is, once again, bounded by $(t^d - r^d)^{1 + 1/q}$, up to constants.
\end{proof}

To prove Theorem \ref{t:CLT.Hill.estimator}, we need to show that the asymptotic normality  in Proposition \ref{p:CLT.functional} is preserved under the map $x \mapsto \int_{1}^{\infty} x(t) t^{-1} \d t$, after replacing the unknown radius $\Rk$ with its consistent estimator $\consisest$. 

\begin{proof}[Proof of Theorem \ref{t:CLT.Hill.estimator}]
From Propositions \ref{p:consistency.tail.measure} and \ref{p:CLT.functional}, we have  that as $n\to\infty$, 
$$
\begin{aligned}
& \lp \lp \tau_{k,n}^{-1/2} \lp \nu_{k,n}(t) - \E \lb \nu_{k,n}(t) \rb \rp, \,  t \in (0, \infty] \rp, \, \frac{\consisest}{\Rk} \rp_{k = 1}^K  \\
& \Rightarrow \lp \lp W_{k}(t), \, t \in (0, \infty] \rp, 1 \rp_{k = 1}^K
\end{aligned}
$$
in the space $\lp \mathcal{D}(0, \infty] \times (0,\infty) \rp^K$.
As an analogue of \eqref{e:nu.kn.proc.version}, we define 
$$
\hat \nu_{k,n} (t) := \hat \nu_{k,n}(B_t), \ \ \ t\in (0,\infty]. 
$$
For each $1 \le k \le K$, define the map $J : \mathcal{D}(0, \infty] \times (0,\infty) \to \mathcal{D}(0, \infty]$ by $J \lp x(\cdot), r \rp := x (r \cdot)$ such that 
$$
J \lp \nu_{k,n}(\cdot ), \frac{\consisest}{\Rk} \rp (t) =  \hat{\nu}_{k,n}(t), \ \  \ t\in (0,\infty], 
$$
Since $J$ is continuous on the support of the process $W_k$, it follows from the continuous mapping theorem that,  as $n\to\infty$, 
\begin{align*}
&\lp \lp \tau_{k,n}^{-1/2} \lp \hat{\nu}_{k,n}(t) - \E \lb \nu_{k,n}(s) \rb \Big|_{s = \frac{\consisest t}{\Rk}} \rp, \, t \in (0, \infty] \rp \rp_{k = 1}^K \\
&\qquad \Rightarrow \lp\lp W_{k}(t), \,  t \in (0, \infty] \rp\rp_{k =1}^K, 
\end{align*}
in the space $\big(\mathcal{D} (0, \infty]\big)^K$.

Since the map $x(\cdot) \mapsto \int_{1}^M x(t) t^{-1} \d t$ is continuous on $\mathcal{D}(0, \infty]$ for every finite $M > 0$, the desired result follows if we can show that, for every   $k =1,\dots,K$, and $\delta >0$ 
\begin{equation}  \label{e:second.converging.Hill.asym.normality}
\lim_{M \to \infty} \limsup_{n \to \infty} \Pr \bigg(  \tau_{k,n}^{-1/2} \bigg| \int_{M}^{\infty} \hat{\nu}_{k,n}(t) \frac{\d t}{t} - \int_{M}^{\infty} \E \lb \nu_{k,n}(s) \rb\Big|_{s = \frac{\consisest t}{\Rk} } \frac{\d t}{t} \bigg| > \delta \bigg) = 0; 
\end{equation}
see \cite[Theorem 3.5]{Resnick:2007}. 
Using an argument similar to  \eqref{e:tail.prob.negligible}, along with Chebyshev's inequality, 
\begin{align*}
&\limsup_{n \to \infty} \Pr \bigg(  \tau_{k,n}^{-1/2} \bigg| \int_{M}^{\infty} \hat{\nu}_{k,n}(t) \frac{\d t}{t} - \int_{M}^{\infty} \E \lb \nu_{k,n}(s) \rb\Big|_{s = \frac{\consisest t}{\Rk} } \frac{\d t}{t} \bigg| > \delta \bigg)  \\
&\le \limsup_{n \to \infty} \Pr \bigg(  \tau_{k,n}^{-1/2}\int_{M/2}^\infty \big| \nu_{k,n}(t) -\E [\nu_{k,n}(t)] \big|\,   \frac{\d t}{t} >\delta \bigg)  \\
&\le \frac{1}{\delta^2} \limsup_{n\to\infty}\Big( \int_{M/2}^{\infty} \sqrt{ \tau_{k,n}^{-1} \V \big( \nu_{k,n}(t) \big)} \, \frac{\d t }{t} \Big)^2.
\end{align*}
By following calculations for the covariance asymptotics in the proof of Lemma \ref{l:exp.var.asym} $(ii)$ and using Potter's bounds (for the application of the dominated convergence theorem),  we can derive  that for any $0 < \vep < \alpha -d $,
\begin{align*}
\tau_{k,n}^{-1} \V \big( \nu_{k,n}(t) \big) & \le C^*\sum_{\ell = 1}^k \int_{t}^{\infty} \rho^{d - 1 - (2k - \ell) (\alpha - \vep)} \d \rho \le C^* \sum_{\ell = 1}^k t^{d - (2k - \ell) (\alpha - \vep)}, 
\end{align*}
which in turn implies that 
$$
\limsup_{n\to\infty} \int_{M/2}^{\infty} \sqrt{ \tau_{k,n}^{-1} \V \big( \nu_{k,n}(t) \big) }\,  \frac{\d t}{t}  \le C^* \sum_{\ell = 1}^k \lp \frac{M}{2} \rp^{\frac{d}{2} - \frac{(2k - \ell) (\alpha - \vep)}{2}}.
$$
The last term converges to $0$ as $M\to\infty$, so the proof is completed. 
\end{proof}

\subsection{Proof of Theorem \ref{t:adjust.centering}}

Throughout this section, we assume Conditions (C1)--(C3) in Section \ref{sec:non.random.centering}. Recall that we have already established in Lemma \ref{l:exp.var.asym} $(i)$ that for every $t>0$, 
\begin{equation}  \label{e:pointwise.conv.t}
m^{-k}\E\big[ \nu_{k,n} (t)\big] \to t^{d-\alpha k}, \ \  \text{ as } n \to \infty.
\end{equation}
The next lemma provides a result stronger than \eqref{e:pointwise.conv.t} under Conditions (C1)--(C3). 

\begin{lemma} \label{l:stronger.mean.conv}
$(i)$ For every $k \ge1$ and  $\delta > 0$, 
\begin{equation}  \label{e:uniform.conv.mean}
\sup_{t \in [\delta, \infty]} m^k \tau_{k,n}^{-1/2} \left| m^{-k} \E \lb \nu_{k,n}(t) \rb -  t^{d - \alpha k} \right| \to 0, \ \  \text{ as } n \to \infty.
\end{equation}
$(ii)$ For every $k\ge1$, 
$$
m^k \tau_{k,n}^{-1/2} \int_1^\infty \big( m^{-k} \E[\nu_{k,n}(t)]-t^{d-\alpha k} \big) \, \frac{\d t}{t} \to 0, \ \ \text{as } n\to\infty. 
$$
\end{lemma}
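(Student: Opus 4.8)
The plan is to first reduce the normalization. By \eqref{e:def.tau_kn}, $m^k\tau_{k,n}^{-1/2}=m^{k/2}$ when $nf(\Rk)\to0$ or $\to\xi\in(0,\infty)$, and $m^k\tau_{k,n}^{-1/2}=m^{k/2}\lp nf(\Rk)\rp^{-(k-1)/2}\le m^{k/2}$ for large $n$ when $nf(\Rk)\to\infty$; hence it suffices to prove $(i)$ and $(ii)$ with $m^{k/2}$ in place of $m^k\tau_{k,n}^{-1/2}$.

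Next I would reuse the computation behind Lemma \ref{l:exp.var.asym}$(i)$: running the multivariate Mecke formula and the changes of variables in \eqref{e:multi.Mecke}--\eqref{e:final.form.expectation} with $\phi=\1_{B_t}$ gives $m^{-k}\E[\nu_{k,n}(t)]=A_n B_n(t)$, where $A_n:=\frac{1}{(C_k)^kk!}\lp\frac{C_kn}{m}\rp^k\Rk^d f(\Rk)^k$ and $B_n(t)$ is the integral over $(\rho,\theta,\bz)\in(0,\infty)\times\mathbb{S}^{d-1}\times(\R^d)^{k-1}$ of the density-ratio product $\frac{f(\Rk\rho)}{f(\Rk)}\prod_{i=1}^{k-1}\frac{f(\Rk|\rho\theta+\Rk^{-1}z_i|)}{f(\Rk)}$ against $h_k(0,\bz)\rho^{d-1}$, restricted to $\{\ms{min}(\rho\theta,\rho\theta+\Rk^{-1}z_1,\dots,\rho\theta+\Rk^{-1}z_{k-1})\ge t\}$. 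With $A^*:=(\alpha k-d)/((C_k)^kk!)$ and $B^*(t):=k!(C_k)^k(\alpha k-d)^{-1}t^{d-\alpha k}$ (so that $A^*B^*(t)=t^{d-\alpha k}$, using $(C_k)^k=\frac{s_{d-1}}{k!}\int h_k(0,\bz)\d\bz$), I would write
$$
m^{-k}\E[\nu_{k,n}(t)]-t^{d-\alpha k}=(A_n-A^*)B^*(t)+A_n\,\mathrm{E}^{(1)}_n(t)+A_n\,\mathrm{E}^{(2)}_n(t),
$$
where $\mathrm{E}^{(1)}_n(t)$ is the integral (over the same domain) of $\big(\1\{\ms{min}(\cdots)\ge t\}-\1\{\rho\ge t\}\big)$ against $h_k(0,\bz)\cdot(\text{density-ratio product})\cdot\rho^{d-1}$, while $\mathrm{E}^{(2)}_n(t)$ is the integral over $\{\rho\ge t\}$ of $\big(\frac{f(\Rk\rho)}{f(\Rk)}\prod_{i=1}^{k-1}\frac{f(\Rk|\rho\theta+\Rk^{-1}z_i|)}{f(\Rk)}-\rho^{-\alpha k}\big)$ against $h_k(0,\bz)\rho^{d-1}$. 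One checks $\mathrm{E}^{(1)}_n+\mathrm{E}^{(2)}_n=B_n-B^*$, so this is an exact identity.

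Then I would match the three summands with (C2), (C3), (C1), respectively. For $(A_n-A^*)B^*(t)$: $m^{k/2}|A_n-A^*|=((C_k)^kk!)^{-1}m^{k/2}\big|(C_kn/m)^k\Rk^d f(\Rk)^k-(\alpha k-d)\big|\to0$ by (C2), and $B^*$ is decreasing with $\sup_{t\ge\delta}B^*(t)=B^*(\delta)<\infty$ and $\int_1^\infty B^*(t)t^{-1}\d t<\infty$, so this term is harmless for both $(i)$ and $(ii)$. For $A_n\mathrm{E}^{(2)}_n(t)$: $A_n$ is bounded, and since $\ms{min}(\cdots)\ge t$ forces $\rho\ge t$, $|\mathrm{E}^{(2)}_n(t)|$ is dominated by the nonnegative left-hand-side integrand of (C1) over $\{\rho\ge t\}$; hence for $(i)$, $m^{k/2}\sup_{t\ge\delta}|A_n\mathrm{E}^{(2)}_n(t)|\le C^*m^{k/2}\int_{\rho\ge\delta}(\cdots)\to0$ by the first part of (C1), and for $(ii)$ I would combine the pointwise limit $m^{k/2}|\mathrm{E}^{(2)}_n(t)|\to0$ with the uniform bound $m^{k/2}|\mathrm{E}^{(2)}_n(t)|\le Ct^{-q}$ ($n\ge N$) from (C1) and apply dominated convergence, since $\int_1^\infty t^{-q-1}\d t<\infty$. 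For $A_n\mathrm{E}^{(1)}_n(t)$: by \eqref{e:denseness.hk}, on $\{h_k\ne0\}$ each $|\rho\theta+\Rk^{-1}z_i|$ differs from $\rho$ by at most $L/\Rk$, and $\ms{min}(\cdots)\ge t\Rightarrow\rho\ge t$, so $0\le\1\{\rho\ge t\}-\1\{\ms{min}(\cdots)\ge t\}\le\1\{t\le\rho<t+L/\Rk\}$; bounding the density-ratio product on $\{\rho\ge\delta,\,|z_i|\le L\}$ by Potter's bounds (as in \eqref{e:Potter1}--\eqref{e:Potter2}) by a function of $\rho$ that is bounded on compacts and decays like $\rho^{-\alpha k+k\vep}$ at infinity, one obtains $|\mathrm{E}^{(1)}_n(t)|\le C^*\Rk^{-1}g(t)$, where $g(t)$ is the supremum of $\rho^{d-1}$ times that bound over $\rho\in[t,t+L/\Rk]$, and $g$ satisfies $\sup_{t\ge\delta}g(t)<\infty$ and $\int_1^\infty g(t)t^{-1}\d t<\infty$ for $\vep$ small. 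Therefore $m^{k/2}\sup_{t\ge\delta}|A_n\mathrm{E}^{(1)}_n(t)|\le C^*m^{k/2}\Rk^{-1}$ and $m^{k/2}\int_1^\infty|A_n\mathrm{E}^{(1)}_n(t)|t^{-1}\d t\le C^*m^{k/2}\Rk^{-1}$, both $\to0$ by (C3). Adding the three estimates gives $(i)$ and $(ii)$.

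The hard part is the term $A_n\mathrm{E}^{(1)}_n(t)$: it quantifies the mismatch between the event $\{\ms{min}(\Y)\ge t\Rk\}$ that defines $\nu_{k,n}(t)$ and its leading constraint $\{|y_1|\ge t\Rk\}$, a genuinely new feature forced by the geometric weight $h_k$ and with no analogue in the classical Hill-estimator analysis; this mismatch is of order $\Rk^{-1}$, which is precisely what Condition (C3) is designed to absorb. The remaining work — making the Potter dominations uniform in $t\in[\delta,\infty]$ and integrable on $[1,\infty)$, and checking the dominated-convergence hypotheses for $\mathrm{E}^{(2)}_n$ in $(ii)$ from the quantitative bound $C\delta^{-q}$ built into (C1) — is routine bookkeeping.
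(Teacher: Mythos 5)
Your proposal is correct and follows essentially the same strategy as the paper's proof: you reduce the normalization to $m^{k/2}$, split the error into a prefactor term controlled by (C2), a density-ratio term controlled by (C1), and an indicator-mismatch term of order $\Rk^{-1}$ absorbed by (C3), and handle $(ii)$ by dominated convergence using the quantitative $C\delta^{-q}$ bound in (C1). Your three-way decomposition into $(A_n-A^*)B^*(t)$, $A_n E_n^{(1)}(t)$, $A_n E_n^{(2)}(t)$ merely makes explicit what the paper's more compressed presentation (which passes directly to the residual indicator-mismatch integral after invoking (C1), (C2) and Potter) leaves implicit, and it also fills in the details of part $(ii)$ that the paper omits.
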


\begin{proof}[Proof of Lemma \ref{l:stronger.mean.conv}]
For the proof of $(i)$, observe that by the definition of $\tau_{k,n}$ in \eqref{e:def.tau_kn}, we have $m^k \tau_{k,n}^{-1/2} \le C^*m^{k/2}$. It then follows from Conditions (C1) and (C2), together with Potter's bounds applied to the ratios of densities as in \eqref{e:Potter1} and \eqref{e:Potter2}, that \eqref{e:uniform.conv.mean} is bounded above, up to the scale, by 
\begin{align}   
&\sup_{t\in [\delta, \infty]} m^{k/2} \int_{\rho=t}^\infty \int_{\theta\in \mathbb{S}^{d-1}} \int_{\bz\in (\R^d)^{k-1}} h_k(0,\bz) \label{e:after.C1.C2}  \\
&\qquad \qquad\qquad \qquad\qquad  \times \Big( 1-\1\big\{ \ms{min}(\rho\theta + \Rk^{-1}\bz)\ge t \big\} \Big)\, \rho^{d-1-\alpha k} \d \bz \d \sigma(\theta) \d \rho. \notag
\end{align}
For every $\rho\ge t$, $\theta \in \mathbb{S}^{d-1}$, and $\bz\in (\R^d)^{k-1}$ with $h_k(0,\bz)=1$, 
\begin{align*}
1-\1\big\{ \ms{min}(\rho\theta + \Rk^{-1}\bz)\ge t \big\} &\le \sum_{i=1}^{k-1} \1 \big\{\, |\rho\theta +\Rk^{-1}z_i| <t \big\}  \\
&\le (k-1) \1 \big\{ \rho \le t + \Rk^{-1}L \big\}. 
\end{align*}
Hence, one can bound \eqref{e:after.C1.C2} by 
\begin{align*}
&(k-1)s_{d-1} D_{k,k} \sup_{t\in [\delta, \infty]} m^{k/2} \int_t^{t+\Rk^{-1}L} \rho^{d-1-\alpha k} \d \rho \\
&\le \frac{(k-1)s_{d-1}D_{k,k}}{\alpha d-k}\, \delta^{d-\alpha k}  m^{k/2} \Big[ 1-\Big( 1+\frac{L}{\Rk \delta} \Big)^{d-\alpha k} \Big] \\
&\le C^* m^{k/2} \Rk^{-1} \to 0. 
\end{align*}
The last convergence is assured by Condition (C3). 

The proof of $(ii)$ is similar to the above, so we will omit it here. 
\end{proof}

Combining Lemma \ref{l:stronger.mean.conv} $(i)$ and Proposition \ref{p:CLT.functional}, we have, as $n\to\infty$, 
\begin{equation} \label{e:non-random.center.funcl.CLT}
\lp \lp m^k\tau_{k,n}^{-1/2} \big( m^{-k}\nu_{k,n}(t) - t^{d-\alpha k} \big), \, t \in (0,\infty] \rp\rp_{k=1}^K \Rightarrow \lp \lp W_k(t), \, t \in (0,\infty] \rp \rp_{k=1}^K, 
\end{equation}
in the space $\big(\mathcal D(0,\infty]\big)^K$. The remainder of the argument follows \cite[Chapter 9]{Resnick:2007}, where the key component is Vervaat's lemma (\cite{vervaat:1972}). For this purpose, our first step is to establish the functional-level asymptotic normality for the processes 
\begin{align*}
A_{k,n}(s) &:= m^k \tau_{k,n}^{-1/2} \big(m^{-k}\nu_{k,n}(s^{(d-\alpha k)^{-1}}) -s \big), \ \ \ s \in [0,\infty), \\
B_{k,n}(s) &:= m^k \tau_{k,n}^{-1/2} \Big( \Big(\frac{U_k(\lceil m^k s\rceil)}{\Rk}\Big)^{d-\alpha k} -s \Big),   \ \ \ s \in [0,\infty), 
\end{align*}
where $\lceil a \rceil$ denotes the smallest integer exceeding $a$. 

\begin{lemma}  \label{l:CLT.vervaat}
For every $K\ge1$, as $n\to\infty$, 
\begin{align*}
&\Big( \big( A_{k,n}(s), \, s\in [0,\infty) \big), \, \big( B_{k,n}(s), \, s\in [0,\infty) \big) \Big)_{k=1}^K \\
&\quad \Rightarrow \Big( \big( W_{k}(s^{(d-\alpha k)^{-1}}), \, s\in [0,\infty) \big), \, \big( -W_{k}(s^{(d-\alpha k)^{-1}}), \, s\in [0,\infty) \big) \Big)_{k=1}^K, 
\end{align*}
in the space $\big(\mathcal D[0,\infty)\big)^{2K}$. 
\end{lemma}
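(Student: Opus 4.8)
The plan is to derive the joint convergence of $(A_{k,n})_k$ from what we already have, then obtain the convergence of $(B_{k,n})_k$ as a consequence via an inversion/Vervaat-type argument, and finally upgrade to the joint convergence of both families. First, note that $A_{k,n}(s) = m^k\tau_{k,n}^{-1/2}\big(m^{-k}\nu_{k,n}(s^{1/(d-\alpha k)}) - s\big)$ is exactly the process in \eqref{e:non-random.center.funcl.CLT} composed with the deterministic, continuous, strictly monotone time change $t = s^{1/(d-\alpha k)}$ (here $d - \alpha k < 0$, so this maps $[0,\infty)$ to $(0,\infty]$ order-reversingly, sending $s=0$ to $t=\infty$ and $s=\infty$ to $t=0$). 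Since composition with a fixed homeomorphism is continuous on the Skorokhod space, the continuous mapping theorem applied to \eqref{e:non-random.center.funcl.CLT} gives
\begin{equation*}
\big(A_{k,n}(s),\, s\in[0,\infty)\big)_{k=1}^K \Rightarrow \big(W_k(s^{1/(d-\alpha k)}),\, s\in[0,\infty)\big)_{k=1}^K
\end{equation*}
in $\mathcal{D}[0,\infty)^K$. One should check the boundary behavior at $s = 0$: by Lemma \ref{l:stronger.mean.conv}$(i)$ the centering is uniformly controlled up to $t = \infty$, and $W_k$ is continuous at $\infty$ with $W_k(\infty) = 0$ a.s. (each of the three cases has a negative exponent on $t$), so there is no pathology.

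The second and central step is to transfer this to $B_{k,n}$. Observe that $U_k(\lceil m^k s\rceil)$ is, by construction, the order statistic characterized by the relation $\nu_{k,n}(B_t) \ge \lceil m^k s\rceil \iff U_k(\lceil m^k s\rceil) \ge t R_k(C_k n/m)$ — that is, $s \mapsto (U_k(\lceil m^k s\rceil)/R_k(C_k n/m))^{d-\alpha k}$ is essentially the (generalized) inverse of the monotone function $t \mapsto m^{-k}\nu_{k,n}(t^{1/(d-\alpha k)})$, up to the discretization $\lceil\cdot\rceil$ and the strict-vs-weak inequality subtleties. This is precisely the setup of Vervaat's lemma (\cite{vervaat:1972}): if $m^k\tau_{k,n}^{-1/2}(f_n(s) - s) \Rightarrow \zeta(s)$ for monotone $f_n$, then $m^k\tau_{k,n}^{-1/2}(f_n^{-1}(s) - s) \Rightarrow -\zeta(s)$, jointly with the original. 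Applying Vervaat's lemma to the $A_{k,n}$ convergence (using that the limit $s \mapsto W_k(s^{1/(d-\alpha k)})$ has continuous sample paths, which is needed for Vervaat) yields $B_{k,n}(s) \Rightarrow -W_k(s^{1/(d-\alpha k)})$, jointly with $A_{k,n}$, for each fixed $k$. The effect of the ceiling function $\lceil m^k s\rceil$ versus $m^k s$ contributes an error of order $m^k\tau_{k,n}^{-1/2}\cdot m^{-k} = \tau_{k,n}^{-1/2} \to 0$ uniformly on compacts, hence is asymptotically negligible; this needs a short estimate using the monotonicity of $U_k(\cdot)$ and the local behavior of $s\mapsto s^{1/(d-\alpha k)}$.

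Finally, for the full joint convergence across $k = 1, \dots, K$: since \eqref{e:non-random.center.funcl.CLT} already holds jointly in $k$ with independent limit components $W_k$, and since for each $k$ the pair $(A_{k,n}, B_{k,n})$ is a continuous-plus-Vervaat image of the single coordinate process $m^k\tau_{k,n}^{-1/2}(m^{-k}\nu_{k,n}(\cdot) - (\cdot)^{d-\alpha k})$, the joint convergence of the $2K$-tuple follows by combining the joint (independent) convergence of the $K$ underlying processes with the componentwise continuous/Vervaat mappings. The limiting $2K$-tuple inherits the independence across $k$ and the within-$k$ dependence $(W_k(s^{1/(d-\alpha k)}), -W_k(s^{1/(d-\alpha k)}))$ as stated.

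The main obstacle is the rigorous application of Vervaat's lemma: one must carefully match $B_{k,n}$ with the inverse of the correct monotone functional, handle the discretization $\lceil m^k s\rceil$, verify the continuity of the limiting sample paths $s \mapsto W_k(s^{1/(d-\alpha k)})$ (which follows since each $W_k$ is a continuous Gaussian process — a time-changed Brownian motion — and the time change $s^{1/(d-\alpha k)}$ is continuous on $[0,\infty)$ with the right boundary values), and ensure the convergence and inversion can be taken jointly rather than merely marginally. The rest — the continuous mapping step and the negligibility of the ceiling — is routine.
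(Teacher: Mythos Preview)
Your proposal is correct and follows essentially the same approach as the paper: time-change \eqref{e:non-random.center.funcl.CLT} via $s=t^{d-\alpha k}$ to get the $A_{k,n}$ convergence, identify $B_{k,n}$ as the generalized inverse, and apply Vervaat's lemma (Proposition~3.3 in \cite{Resnick:2007}). One small simplification you are missing: the paper observes that the left-continuous inverse of $s\mapsto m^{-k}\nu_{k,n}(s^{(d-\alpha k)^{-1}})$ is \emph{exactly} $s\mapsto \big(U_k(\lceil m^k s\rceil)/R_k(C_kn/m)\big)^{d-\alpha k}$, because $\nu_{k,n}$ is integer-valued and the ceiling is already absorbed in the infimum defining the inverse; hence there is no discretization error to estimate and your separate treatment of the ceiling is unnecessary.
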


\begin{proof}[Proof of Lemma \ref{l:CLT.vervaat}]
By setting $s=t^{d-\alpha k}$, \eqref{e:non-random.center.funcl.CLT} is equivalent to 
$$
\Big( \big( A_{k,n}(s), \, s\in [0,\infty) \big)\Big)_{k=1}^K \Rightarrow \Big(\big( W_k (s^{(d-\alpha k)^{-1}}), \, s \in [0,\infty)\big) \Big)_{k=1}^K, \ \ \text{ in } \big(\mathcal D[0,\infty)\big)^K. 
$$
For any $t>0$, the (left-continuous) inverse of $m^{-k} \nu_{k,n} (\, \cdot^{(d-\alpha k)^{-1}})$ satisfies the following: 
\begin{align*}
\big( m^{-k} \nu_{k,n} (\, \cdot^{(d-\alpha k)^{-1}}) \big)^{\leftarrow}(s) := \inf \big\{  r: \nu_{k,n}(r^{(d-\alpha k)^{-1}}) \ge m^k s\big\} = \Big( \frac{U_k(\lceil m^k s \rceil)}{\Rk} \Big)^{d-\alpha k}. 
\end{align*}
Now, the desired result follows from   Vervaat's lemma in the Appendix.   
\end{proof}

We will present an additional lemma below. 

\begin{lemma}\label{l:Hill.CLT.augmented}
It holds that as $n\to\infty$, 
\begin{align*}
&\bigg( m^k\tau_{k,n}^{-1/2} \Big( H_{k,m,n} -m^{-k}\int_1^\infty \E \big[\nu_{k,n}(s)\big] \Big|_{s=\frac{\consisest t}{\Rk}} \frac{\d t}{t}\Big), \, \\
& \qquad \qquad \qquad \qquad \qquad \qquad  m^k\tau_{k,n}^{-1/2} \Big(  \Big( \frac{\consisest}{\Rk} \Big)^{d-\alpha k}-1 \Big)  \bigg)_{k=1}^K \\
&\quad \Rightarrow \Big( \int_1^\infty W_k(t)\frac{\d t}{t}, \, -W_k(1) \Big)_{k=1}^K, \ \ \text{ in } \R^{2K}. 
\end{align*}
\end{lemma}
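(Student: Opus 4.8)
The plan is to reduce the claim to the joint functional CLT already available in Lemma \ref{l:CLT.vervaat}, and then push that convergence through a continuous (or asymptotically continuous) map that computes the two coordinates in the statement. Recall that the first coordinate in Lemma \ref{l:Hill.CLT.augmented}, after the change of variables $s = \consisest t / \Rk$ inside the centering integral, is
$$
m^k\tau_{k,n}^{-1/2}\Big( H_{k,m,n} - m^{-k}\int_1^\infty \E[\nu_{k,n}(s)]\Big|_{s = \frac{\consisest t}{\Rk}}\,\frac{\d t}{t}\Big) = \int_1^\infty m^k\tau_{k,n}^{-1/2}\big( m^{-k}\hat\nu_{k,n}(t) - m^{-k}\E[\nu_{k,n}(s)]\big|_{s=\frac{\consisest t}{\Rk}}\big)\,\frac{\d t}{t},
$$
which is exactly the integrated version of the process appearing in the proof of Theorem \ref{t:CLT.Hill.estimator}. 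The second coordinate is $m^k\tau_{k,n}^{-1/2}\big( (\consisest/\Rk)^{d-\alpha k} - 1\big)$, which up to the time-change $s\mapsto 1$ is the process $B_{k,n}$ of Lemma \ref{l:CLT.vervaat} evaluated near $s=1$; more precisely $(\consisest/\Rk)^{d-\alpha k} = ((U_k(\lceil m^k\cdot\rceil)/\Rk)^{d-\alpha k})\big|_{s=1}$ since $\consisest = U_k(m^k)$ and $m^k$ is an integer, so this coordinate equals $B_{k,n}(1)$.

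First I would record, using Lemma \ref{l:stronger.mean.conv}$(i)$ (uniform replacement of $m^{-k}\E[\nu_{k,n}(t)]$ by $t^{d-\alpha k}$, at scale $m^k\tau_{k,n}^{-1/2}$) together with the consistency $\consisest/\Rk \stackrel{\Pr}{\to}1$ from Proposition \ref{p:consistency.tail.measure}$(ii)$, that replacing the random centering $m^{-k}\E[\nu_{k,n}(s)]\big|_{s=\consisest t/\Rk}$ by $(\consisest t/\Rk)^{d-\alpha k}$ introduces an error that is $o_p(1)$ after integration against $\d t/t$ on $[1,M]$; the tail $[M,\infty)$ is controlled by the variance bound already derived at the end of the proof of Theorem \ref{t:CLT.Hill.estimator}, via the $\lim_{M}\limsup_n$ argument of \cite[Theorem 3.5]{Resnick:2007}. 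Hence it is enough to prove the claim with the first coordinate replaced by $\int_1^\infty m^k\tau_{k,n}^{-1/2}\big(m^{-k}\hat\nu_{k,n}(t) - (\consisest t/\Rk)^{d-\alpha k}\big)\,\d t/t$. Next, writing $\hat\nu_{k,n}(t) = \nu_{k,n}\big((\consisest/\Rk)\, t\big)$ and substituting $u = (\consisest/\Rk) t$, this integral becomes a continuous functional of the pair $\big( A_{k,n}(\cdot), B_{k,n}(\cdot)\big)$ of Lemma \ref{l:CLT.vervaat}: indeed $m^k\tau_{k,n}^{-1/2}(m^{-k}\nu_{k,n}(u) - u^{d-\alpha k}) = A_{k,n}(u^{d-\alpha k})$, the scaling factor $\consisest/\Rk = 1 + o_p(1)$ only perturbs the limits of integration and the Jacobian negligibly, and the deterministic remainder $\int_1^\infty m^k\tau_{k,n}^{-1/2}((\consisest t/\Rk)^{d-\alpha k} - t^{d-\alpha k})\,\d t/t$ is, by a one-term Taylor expansion, asymptotically $(d-\alpha k)^{-1}\cdot m^k\tau_{k,n}^{-1/2}((\consisest/\Rk)^{d-\alpha k}-1)\cdot(\text{const})$ up to $o_p(1)$ — but I would instead keep everything inside the map and invoke the continuous mapping theorem directly on $(A_{k,n},B_{k,n})_{k=1}^K$, whose joint weak limit is $\big(W_k(\cdot^{(d-\alpha k)^{-1}}), -W_k(\cdot^{(d-\alpha k)^{-1}})\big)_{k=1}^K$. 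Evaluating the limiting functional gives $\int_1^\infty W_k(t)\,\d t/t$ in the first coordinate and $-W_k(1)$ in the second, as claimed.

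Concretely, the key steps in order are: (1) change variables in the centering integral so the first coordinate is an integrated tail-empirical-process fluctuation; (2) use Lemma \ref{l:stronger.mean.conv}$(i)$ and Proposition \ref{p:consistency.tail.measure}$(ii)$ to swap random mean for the power function $(\consisest t/\Rk)^{d-\alpha k}$ with $o_p(1)$ cost on compacts, plus the $\lim_M\limsup_n$ tail estimate from the proof of Theorem \ref{t:CLT.Hill.estimator} to handle $t\to\infty$; (3) express the (truncated) first coordinate and the second coordinate jointly as the image of $(A_{k,n},B_{k,n})_{k=1}^K$ under a map $\Theta_M$ of the form $\big((x,y)\mapsto \int_1^{M}\! x(\text{something involving }y)\,\tfrac{\d t}{t},\ y(1)\big)$, verifying $\Theta_M$ is continuous at the limit path (which is a.s. continuous since $W_k$ is); (4) apply the continuous mapping theorem with Lemma \ref{l:CLT.vervaat}; (5) let $M\to\infty$ and invoke the convergence-together argument to pass from the truncated functional to the full one. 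The main obstacle is step (3): one must be careful that the random argument $\consisest/\Rk$ appearing both as a time-change inside $\nu_{k,n}$ and in the centering does not spoil continuity of the limiting map — this is handled by writing the functional with $B_{k,n}$ (equivalently $(\consisest/\Rk)^{d-\alpha k}$) as an explicit input variable rather than as a hidden random perturbation, and by noting that near the deterministic value $1$ the map depends continuously and jointly on both arguments; the negligibility of the Jacobian/endpoint perturbations then follows from $\consisest/\Rk\to 1$ in probability and the uniform integrability furnished by the Potter-bound variance estimates used throughout Section \ref{sec:proofs}.
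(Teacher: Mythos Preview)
Your proposal is correct and follows essentially the same route as the paper's proof: both rely on Lemma \ref{l:CLT.vervaat}, Lemma \ref{l:stronger.mean.conv}$(i)$, the consistency in Proposition \ref{p:consistency.tail.measure}$(ii)$, a continuous-mapping argument, and the truncate-at-$M$-then-let-$M\to\infty$ device from the proof of Theorem \ref{t:CLT.Hill.estimator}. The only notable difference is organizational. The paper reverts the time change $s=t^{d-\alpha k}$ first, uses Lemma \ref{l:stronger.mean.conv}$(i)$ to pass from the power-law centering back to $\E[\nu_{k,n}(t)]$, augments explicitly with the scalar $\consisest/\Rk$ via Slutsky, and then applies the simple composition map $J_k(x(\cdot),z,r)=(x(r\cdot),z)$ before integrating. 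You instead replace the random centering by the power function up front and try to package the resulting object as a functional $\Theta_M$ of $(A_{k,n},B_{k,n})$ alone. That packaging is slightly awkward because the ratio $r_n=\consisest/\Rk$ is not a fixed function of $B_{k,n}(1)$ (recovering it requires the $n$-dependent factor $m^{-k}\tau_{k,n}^{1/2}$), so your map $\Theta_M$ is implicitly $n$-dependent; the clean fix, which you essentially acknowledge, is to carry $r_n$ as a third input converging to $1$ and argue continuity there, exactly as the paper does. With that adjustment your argument and the paper's are the same.
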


\begin{proof}[Proof of Lemma \ref{l:Hill.CLT.augmented}]
Reverting the time parameter in Lemma \ref{l:CLT.vervaat} back to the original  $t=s^{(d-\alpha k)^{-1}}$, and appealing to Lemma \ref{l:stronger.mean.conv} $(i)$ again, as well as the augmentation via Proposition \ref{p:consistency.tail.measure} $(ii)$, it holds that 
\begin{align*}
&\bigg( \Big( \tau_{k,n}^{-1/2} \big( \nu_{k,n}(t) -\E[\nu_{k,n}(t)] \big), \, t\in (0,\infty] \Big), \, m^k \tau_{k,n}^{-1/2} \Big( \Big( \frac{\consisest}{\Rk} \Big)^{d-\alpha k}-1 \Big), \, \\
& \qquad \qquad \qquad \qquad \qquad \qquad \qquad \qquad \qquad \qquad \qquad \qquad \qquad \qquad \quad \frac{\consisest}{\Rk}  \bigg)_{k=1}^K \\
&\quad \Rightarrow \Big( \big( W_k(t), \, t\in (0,\infty] \big), \, -W_k(1), \, 1 \Big)_{k=1}^K, \ \ \text{in } \big( \mathcal D(0,\infty] \times \R \times (0,\infty) \big)^K. 
\end{align*}
The assertion can now be obtained via the composition map $J: \mathcal{D}(0, \infty] \times \R \times (0, \infty) \to \mathcal{D}(0, \infty] \times \R$, defined by $J(x(\cdot), z, r) = (x(r \cdot), z)$ (this map is continuous on the support of $W_k$). This is followed by the integration map $x(\cdot) \mapsto \int_1^\infty x(t) t^{-1} \, \d t$. Since this integration map is not continuous, we first apply its truncated and continuous version, $x(\cdot) \mapsto \int_1^M x(t) t^{-1} \, \d t$ with $M \in (1, \infty)$. Finally, an approximation argument is used, following \cite[Theorem 3.5]{Resnick:2007}, as in the proof of \eqref{e:second.converging.Hill.asym.normality}.
\end{proof}
\medskip

\begin{proof}[Proof of Theorem \ref{t:adjust.centering}]
The first component in Lemma \ref{l:Hill.CLT.augmented} can be divided into three terms: for any $k=1,\dots,K$, 
\begin{align}
&m^k\tau_{k,n}^{-1/2} \Big( H_{k,m,n} -m^{-k}\int_1^\infty \E \big[\nu_{k,n}(s)\big] \Big|_{s=\frac{\consisest t}{\Rk}} \frac{\d t}{t}\Big) \label{e:decomp.into.3} \\
&\quad = m^k\tau_{k,n}^{-1/2}  \big( H_{k,m,n} - (\alpha k-d)^{-1} \big) \notag \\
& \qquad + \tau_{k,n}^{-1/2} \int_1^\infty \Big( \E[\nu_{k,n}(t)] - \E[ \nu_{k,n}(s)]\Big|_{s=\frac{\consisest t}{\Rk}} \Big) \frac{\d t}{t} \notag \\
&\qquad -m^k \tau_{k,n}^{-1/2} \int_1^\infty \big( m^{-k} \E[\nu_{k,n}(t)]-t^{d-\alpha k} \big) \frac{\d t}{t} =: A_{1,n} + A_{2,n} - A_{3,n}.\notag 
\end{align}
Among these terms, Lemma \ref{l:stronger.mean.conv} $(ii)$ ensures that $A_{3,n}\to0$ as $n\to\infty$. 

We next claim that
\begin{equation} \label{e:second.comp.ratio} 
A_{2,n}= \frac{m^k \tau_{k,n}^{-1/2}}{\alpha k - d} \Big( 1 - \Big( \frac{\consisest}{\Rk} \Big)^{d - \alpha k} \Big) + o_p(1).
\end{equation}
Assume temporarily that \eqref{e:second.comp.ratio} holds. It then follows from Lemma \ref{l:Hill.CLT.augmented} and \eqref{e:decomp.into.3}, as well as \eqref{e:second.comp.ratio}, that as $n \to \infty$,
\begin{equation} \label{e:W_k.functional}
\Big( m^k \tau_{k,n}^{-1/2} \big( H_{k,m,n} - (\alpha k - d)^{-1} \big) \Big)_{k=1}^K \Rightarrow \bigg( \int_1^\infty W_k(t) \frac{\d t}{t} - \frac{W_k(1)}{\alpha k - d} \bigg)_{k=1}^K.
\end{equation}
It is straightforward to verify that the law of the right-hand side in \eqref{e:W_k.functional} is equivalent  to the law of $(Z_k)_{k=1}^K$ in Theorem \ref{t:adjust.centering}, which can be done by showing that the covariances of both random variables coincide.

It now remains to prove \eqref{e:second.comp.ratio}. First, by the change of variables as in \eqref{e:first.change.of.variables}--\eqref{e:final.form.expectation}, $A_{2,n}$ can be denoted as 
\begin{align}
& \frac{m^k \tau_{k,n}^{-1/2}}{k!} \lp \frac{n}{m} \rp^k \Rk^d f \lp \Rk \rp^k \label{e:diff.two.exp}\\
& \qquad \qquad \qquad \qquad \qquad \times \int_{t = 1}^{\infty} \int_{\rho = 0}^{\infty} \int_{\theta \in \mathbb{S}^{d-1}} \int_{\bz \in (\R^d)^{k-1}} h_k(0, \bz) \rho^{d-1}  \notag \\
& \times \bigg[ \1 \{ \ms{min}(\rho \theta, \rho \theta + \Rk^{-1} \bz) \ge t \} \notag \\
& \qquad \qquad \qquad   - \1 \bigg\{ \ms{min} (\rho \theta, \rho \theta + \Rk^{-1} \bz) \ge \frac{\consisest t}{\Rk}  \bigg\} \bigg] \notag \\
& \times \frac{f \lp \Rk \rho \rp}{f \lp \Rk \rp} \prod_{i=1}^{k-1} \frac{f \lp \Rk | \rho \theta + \Rk^{-1} z_i|  \rp}{f \lp \Rk \rp} \d \bz \d \sigma(\theta) \d \rho \frac{\d t}{t}.  \notag 
\end{align}
Note that
\begin{align}
& \bigg| \1 \{ \ms{min}(\rho \theta, \rho \theta + \Rk^{-1} \bz) \ge t \}\label{e:bound.diff.indicators} \\
& \qquad  \qquad  - \1\bigg\{ \ms{min}(\rho \theta, \rho \theta +\Rk^{-1} \bz) \ge \frac{\consisest t}{\Rk}  \bigg\} \bigg| \notag \\
& \leq \1\{ \rho \ge t \} + \1 \left\{ \rho \geq \frac{\consisest t}{\Rk} \right\}. \notag 
\end{align}
Because of Conditions (C1) and (C2), Proposition \ref{p:consistency.tail.measure} $(ii)$, the bound in \eqref{e:bound.diff.indicators}, and Potter's bounds applied to the ratios of densities, \eqref{e:diff.two.exp} is asymptotically equal to 
\begin{align}
&\frac{\alpha k -d}{k! (C_k)^k}\, m^k \tau_{k,n}^{-1/2} \int_{t = 1}^{\infty} \int_{\rho = 0}^{\infty} \int_{\theta \in \mathbb{S}^{d-1}} \int_{\bz \in (\R^d)^{k-1}} h_k(0, \bz) \rho^{d-1-\alpha k} \label{e:difference.only.indicators} \\
& \times \bigg[ \1 \{ \ms{min}(\rho \theta, \rho \theta + \Rk^{-1} \bz) \ge t \} \notag \\
&\qquad  \quad - \1 \left\{ \ms{min} (\rho \theta, \rho \theta + \Rk^{-1} \bz) \ge \frac{\consisest t}{\Rk}  \right\} \bigg] \d \bz \d \sigma(\theta) \d \rho \frac{\d t}{t} + o_p(1). \notag
\end{align}
Subsequently, we need to justify that the two indicator functions in \eqref{e:difference.only.indicators}  can be asymptotically replaced with $\1\{ \rho\ge t\}$ and $\big\{ \rho \ge \Rk^{-1} \consisest t  \big\}$, respectively. To verify this claim for the latter indicator (the former can be treated analogously), observe that
\begin{align*}
0 &\le \1 \Big\{ \rho \ge \frac{\consisest t}{\Rk}  \Big\} - \1 \Big\{ \ms{min}(\rho\theta, \rho \theta + \Rk^{-1} \bz )\ge \frac{\consisest t}{\Rk} \Big\}  \\
&= \1 \Big\{ \rho \ge \frac{\consisest t}{\Rk}  \Big\}\, \prod_{i=1}^{k-1} \1 \Big\{ \, \Big| \rho \theta +\frac{z_i}{\Rk}  \Big| < \frac{\consisest t}{\Rk}  \Big\}  \\
&\le (k-1) \1 \Big\{ \frac{\consisest t}{\Rk} \le \rho < \frac{L}{\Rk} +  \frac{\consisest t}{\Rk} \Big\}. 
\end{align*}
Then, 
\begin{align*}
&m^k \tau_{k,n}^{-1/2} \int_{t = 1}^{\infty} \int_{\rho = 0}^{\infty} \int_{\theta \in \mathbb{S}^{d-1}} \int_{\bz \in (\R^d)^{k-1}} h_k(0, \bz) \rho^{d-1-\alpha k} \\
&\qquad \qquad \qquad \times  \1 \Big\{ \frac{\consisest t}{\Rk} \le \rho < \frac{L}{\Rk} +  \frac{\consisest t}{\Rk} \Big\} \d \bz \d \sigma(\theta) \d \rho \frac{\d t}{t} \\
&\le C^* m^k \tau_{k,n}^{-1/2} \Big( \frac{\consisest}{\Rk} \Big)^{d-\alpha k} \bigg(  1-\Big( 1+\frac{L}{\consisest} \Big)^{d-\alpha k}\bigg). 
\end{align*}
By Proposition \ref{p:consistency.tail.measure} $(ii)$ and Condition (C3), as well as $m^k \tau_{k,n}^{-1/2}\le C^* m^{k/2}$, the last term converges to $0$ in probability. 

Summarizing these results, the main term in \eqref{e:difference.only.indicators} is asymptotically equivalent  to 
\begin{align*}
&\frac{\alpha k -d}{k! (C_k)^k}\, m^k \tau_{k,n}^{-1/2} \int_{t = 1}^{\infty} \int_{\rho = 0}^{\infty} \int_{\theta \in \mathbb{S}^{d-1}} \int_{\bz \in (\R^d)^{k-1}} h_k(0, \bz) \rho^{d-1-\alpha k}  \\
&\qquad  \qquad \qquad \qquad \qquad \times \bigg[ \1 \{ \rho \ge t \} - \1 \Big\{ \rho \ge \frac{\consisest t}{\Rk}  \Big\} \bigg] \d \bz \d \sigma(\theta) \d \rho \frac{\d t}{t} \\
&= \frac{m^k \tau_{k,n}}{\alpha k-d}\,\bigg( 1-\Big( \frac{\consisest}{\Rk} \bigg)^{d-\alpha k} \Big). 
\end{align*}
Now, \eqref{e:second.comp.ratio} has been verified. 
\end{proof}

%%%%%%%%%%%%%%%%%%%%%%%%%%%%%%%%%%%%%%%%%%%%%%
%% Support information, if any,             %%
%% should be provided in the                %%
%% Acknowledgements section.                %%
%%%%%%%%%%%%%%%%%%%%%%%%%%%%%%%%%%%%%%%%%%%%%%
\begin{acks}[Acknowledgments]
The authors would like to thank the anonymous referees for their insightful comments and suggestions that improved the quality of this paper. Particularly, their invaluable comments on more recent techniques for normal approximation for Poisson U-functionals and the properties of the space of signed Radon measures, enriched the authors' discussion. Most of TK's work was done during his Ph.D. in the Department of Statistics at Purdue University.
\end{acks}

%%%%%%%%%%%%%%%%%%%%%%%%%%%%%%%%%%%%%%%%%%%%%%
%% Funding information, if any,             %%
%% should be provided in the                %%
%% funding section.                         %%
%%%%%%%%%%%%%%%%%%%%%%%%%%%%%%%%%%%%%%%%%%%%%%
\begin{funding}
TO's research was partially supported by the AFOSR grant FA9550-22-1-0238 at Purdue University.
\end{funding}

\bibliographystyle{imsart-number}
\bibliography{Hill.bib}

%% or include bibliography directly:

\appendix

\section{Limit value of $nf\big( R_k(C_kn/m) \big)$}

\begin{lemma}  \label{l:no.oscillation}
Under the assumptions of Section \ref{sec:formal.def}, for each $k\ge1$  there exists an increasing function $R_k:[0,\infty)\to [0,\infty)$  satisfying \eqref{e:def.radius.function.R}. Moreover, the limit of $nf\big( R_k(C_kn/m) \big)$ exists in $[0,\infty]$  without oscillation, where $C_k$ is the constant defined in \eqref{e:def.Ck}. More precisely, we have the following results. 
\vspace{2pt}

\noindent $(i)$ If $\beta<d/(\alpha k)$ or $\beta=d/(\alpha k)$ and $L(t)\to\infty$ as $t\to\infty$, then $nf\big( R_k(C_kn/m) \big) \to 0$ as $n\to\infty$. \\
$(ii)$  If $\beta>d/(\alpha k)$ or $\beta=d/(\alpha k)$ and $L(t)\to0$ as $t\to\infty$, then $nf\big( R_k(C_kn/m) \big) \to \infty$ as $n\to\infty$. \\
$(iii)$ If $\beta=d/(\alpha k)$ and $L(t)$ converges to a finite, positive constant as $t\to\infty$, then $nf\big( R_k(C_kn/m) \big) \to \xi\in (0,\infty)$ as $n\to\infty$. 
\end{lemma}

\begin{proof}
Given $k\ge1$, define $q(r):= r^{-d/k}f(r)^{-1}$. Then $q$ is regularly  varying with index $\alpha-d/k$. Moreover,  $q$ is eventually increasing, since by \eqref{e:smoothness.L},
$$
\frac{\dif }{\dif  r}\log q(r)= r^{-1}\Big( \alpha-\frac{d}{k}-\frac{rL'(r)}{L(r)} \Big)>0. 
$$
We now define $R_k(t)$ as the (left-continuous) inverse of $q$:
$$
R_k(t):=q^\leftarrow \Big( \frac{t}{(\alpha k-d)^{1/k}} \Big) = \inf \big\{  r: q(r) \geq t(\alpha k-d)^{-1/k}\big\}, \ \ \ t>0. 
$$
It then follows from Proposition 2.6 (v) in \cite{Resnick:2007} that $R_k(t)$ is regularly varying with index  $k/(\alpha k-d)$ and diverges to infinity. Moreover, it satisfies
\begin{equation}  \label{e:original.inverse}
q\big( R_k(t) \big) = \frac{t}{(\alpha k-d)^{1/k}}, \ \ \text{ for large } t. 
\end{equation}
By the definition of $q$, we have 
$$
t^k R_k(t)^d f\big( R_k(t) \big)^k  = \Big( \frac{t}{q\big( R_k(t) \big)} \Big)^k. 
$$
By \eqref{e:original.inverse}, the right-hand side equals $\alpha k - d$ for large $t$, and hence, \eqref{e:def.radius.function.R} follows. 

Subsequently, we discuss the asymptotics  of $n f\big( R_k(C_k n/m) \big)$. Using \eqref{e:def.radius.function.R}, we find that 
\begin{equation}  \label{e:nf.1st.step}
nf\big( R_k(C_k n/m) \big) \sim \frac{(\alpha k-d)^{1/k}}{C_k}\cdot \frac{m}{R_k(C_kn/m)^{d/k}}, \ \ \ n\to\infty. 
\end{equation}
By the representation $q(r)=r^{(\alpha k-d)/k}L(r)^{-1}$, we have 
$$
q\big( R_k(C_kn/m) \big) = R_k(C_kn/m)^{(\alpha k-d)/k} L\big( R_k(C_kn/m) \big)^{-1}. 
$$
Furthermore, \eqref{e:original.inverse} implies 
$$
q\big( R_k(C_kn/m) \big) = \frac{C_k}{(\alpha k-d)^{1/k}} \cdot \frac{n}{m}, 
$$
for sufficiently large $n$ and thus,
\begin{equation}  \label{e:Rk.expression}
R_k(C_kn/m)^{-d/k} = \bigg\{ \frac{C_k}{(\alpha k-d)^{1/k}} \cdot \frac{n}{m} L\big( R_k(C_kn/m) \big)\bigg\}^{-d/(\alpha k-d)}, 
\end{equation}
for large $n$. 
Substituting \eqref{e:Rk.expression} and \eqref{e:RV.m(n)} into \eqref{e:nf.1st.step} yields 
\begin{equation}   \label{e:limit.nf}
nf\big( R_k(C_k n/m) \big) \sim \Big(\frac{(\alpha k-d)^{1/k}}{C_k}\Big)^{\alpha  k/(\alpha k-d)} \, n^{(\alpha  \beta k -d)/(\alpha k-d)}   L\big( R_k (C_kn/m) \big)^{-d/(\alpha k-d)}. 
\end{equation}
Using \eqref{e:limit.nf} and recalling that $L$ is eventually monotone, the convergence in Cases $(i)$--$(iii)$ follows immediately.
\end{proof}
\medskip

\section{Technical Tools from Extreme Value Theory}

We present  two technical results, Potter's bounds and Vervaat's lemma. 
We refer the reader to \cite{Resnick:2007} for a detailed discussion on these results. 

\begin{lemma}[Potter's bounds from Proposition 2.6 in \cite{Resnick:2007}]  \label{l:potter}
Suppose that $U: [0,\infty)\to [0,\infty)$ is a  regularly varying function with exponent $\rho \in \R$. Given $\e > 0$,  there exists $t_0$ such that for $x \geq 1$ and $t \geq t_0$, 
$$
(1- \e) x^{\rho - \e} < \frac{U(tx)}{U(t)} < (1 + \e) x^{\rho + \e}.
$$
\end{lemma}

\begin{lemma}[Vervaat's lemma from Proposition 3.3 in \cite{Resnick:2007}]  \label{l:vervaat}
Suppose $(X_n)_{n\ge0}$ is a sequence of $\mathcal{D}[0, \infty)$-valued random elements such that $X_0$ has continuous paths. Let $e(t)$ be the identity:  $e(t)=t$.  Assume that $X_n$, $n\ge1$, has non-decreasing paths and satisfies
$$
c_n \left( X_n - e \right) \Rightarrow X_0, \ \ \text{in } \mathcal{D}[0, \infty), \ \ \ n\to\infty, 
$$
for some $c_n\to\infty$. 
It then holds that as $n\to\infty$, 
$$
c_n \left( X_n - e, X_n^{\leftarrow} - e \right) \Rightarrow (X_0, -X_0), \ \ \text{in  } \big(\mathcal D[0,\infty)\big)^2, 
$$
where $X_n^\leftarrow (s) := \inf\big\{ r: X_n(r)\ge s \big\}$ is the (left-continuous) inverse of $X_n$. 
\end{lemma}

\end{document}